\documentclass[11pt,a4paper]{amsart}

\usepackage{graphics, amsmath, amsfonts, amssymb, amsthm, amscd, mathrsfs, color}
\input xy

\usepackage[mathscr]{eucal}
 \usepackage{extarrows}

\textwidth146.67truemm
\textheight248.2truemm
\hoffset-12truemm
\voffset-9truemm
\theoremstyle{plain}
\newtheorem{Thm}{Theorem}[section]

\newtheorem{Lem}[Thm]{Lemma}
\newtheorem{Prop}[Thm]{Proposition}
\newtheorem{Prob}[Thm]{Problem}

\theoremstyle{definition}
\newtheorem{Def}[Thm]{Definition}

\theoremstyle{remark}
\newtheorem{Rem}[Thm]{Remark}
\newtheorem{Exa}[Thm]{Example}

\newcommand{\Aut}{ \operatorname{{\rm Aut}}}
\newcommand{\SAut}{ \operatorname{{\rm SAut}}}
\newcommand{\Lie}{ \operatorname{{\rm Lie}}}
\newcommand{\VF}{ \operatorname{{\rm VF}}}
\newcommand{\Gl}{ \operatorname{{\rm GL}}}
\newcommand{\Sl}{ \operatorname{{\rm SL}}}
\newcommand{\SU}{ \operatorname{{\rm SU}}}
\newcommand{\U}{ \operatorname{{\rm U}}}
\newcommand{\SO}{ \operatorname{{\rm SO}}}
\newcommand{\OO}{ \operatorname{{\rm O}}}
\newcommand{\T}{ \operatorname{{\rm Tame}}}
\newcommand{\Aff}{ \operatorname{{\rm Aff}}}
\newcommand{\Jonq}{ \operatorname{{\rm Jonq}}}
\newcommand{\Osh}{ \operatorname{{\rm Osh}}}
\newcommand{\Hol}{ \operatorname{{\rm Hol}}}
\newcommand{\Cont}{ \operatorname{{\rm Cont}}}

\newcommand{\udisk}{\mathbb{D}} 
\newcommand{\id}{\mathrm{id}}
\newcommand{\Eeul}{\EuScript{E}}

\newcommand{\mat}[3]{{\mathrm{Mat}\left( #1 \times #2 ; \, #3 \right)}} 

\newcommand{\CVF}{ \operatorname{{\rm CVF}}}
\newcommand{\Iso}{ \operatorname{{\rm Iso}}}
\newcommand{\reg}{ {\rm reg}}
\newcommand{\Id}{ {\rm Id}}
\newcommand{\codim}{ {\rm codim}}
\newcommand{\boundary}{\partial} 

\newcommand{\g }{{\mathfrak{g}}}

\newcommand{\B}{\ensuremath{\mathbb{B}}}
\newcommand{\Z}{\ensuremath{\mathbb{Z}}}
\newcommand{\R}{\ensuremath{\mathbb{R}}}
\newcommand{\N}{\ensuremath{\mathbb{N}}}
\newcommand{\C}{\ensuremath{\mathbb{C}}}
\newcommand{\G}{\ensuremath{\mathbb{G}}}
\newcommand{\cO}{{\ensuremath{\mathcal{O}}}}
\newcommand{\cat}{{\ensuremath{/\! /}}}

\numberwithin{equation}{section}

\begin{document}

\title[Manifolds with infinite dimensional group of holomorphic  automorphisms] 
{Manifolds with infinite dimensional group of holomorphic  automorphisms and 
the Linearization Problem}
\author{Frank Kutzschebauch}
\address{Departement Mathematik\\
Universit\"at Bern\\
Sidlerstrasse 5, CH--3012 Bern, Switzerland}
\email{frank.kutzschebauch@math.unibe.ch}
\thanks{Kutzschebauch partially supported by Schweizerischer Nationalfonds Grant 200021-116165 }
\date{\today}
\bibliographystyle{amsalpha}
\begin{abstract} 
We overview a number of precise notions for a holomorphic automorphism group to be big together with their implications,  in particular we give an exposition of the notions of flexibility and  of density property.

These studies have their origin in the famous result of Anders\'en and Lempert from 1992 proving that the overshears generate a dense subgroup
in the holomorphic automorphism group of $\C^n, n\ge 2$. There are many applications to natural geometric questions in complex geometry, several of which we mention here.

Also the Linearization Problem, well known since the 1950’s and considered by many authors, has had a strong influence on those studies.
It asks whether a compact subgroup in the holomorphic automorphism group of $\C^n$ is necessarily conjugate to a group of linear automorphisms. Despite many positive results,
the answer in this generality is negative as shown by Derksen and the author. We describe various developments around that problem.

\end{abstract}

\maketitle

\tableofcontents

\section{Introduction}
The holomorphic and at the same time algebraic automorphism group of the complex line $\C$ consists of invertible 
affine maps $z\mapsto az+b, a\in \C^\star, b\in \C$. It is a complex Lie group generated by the translations
 $z \mapsto z +b$ and the rotations $z \mapsto  a z$.

It is a classical fact that $\C^n$ for $n\ge 2$ has  infinite dimensional groups of algebraic and of
holomorphic automorphisms. Indeed, maps of the form

\begin{equation} \label{shear}
 (z_1, \ldots, z_n) \mapsto (z_1, \ldots , z_{n-1}, z_n + f (z_1, \ldots , z_{n-1})), 
\end{equation}

\noindent
where $f \in \cO (\C^{n-1})$ is an arbitrary polynomial or holomorphic   function of $n-1$ variables, 
are automorphisms. They can be viewed as time-1 maps of the vector field $\theta = f(z_1, \ldots, z_{n-1}) {\partial \over {\partial z_n}}$.
In complex analysis (when $f$ is holomorphic) such an automorphism is called a shear and such a vector field  is called a shear field. If $f$ is a 
polynomial  the complex analysts call the automorphism  a polynomial shear, whereas in affine algebraic geometry it is
 called an elementary automorphism.  The geometric idea behind this automorphism is to look at $\C^n$ as a trivial line bundle over $\C^{n-1}$ and 
 performing  a translation in each of the fibers of that line bundle, that depends polynomially or holomorphically on the base point.
In the same way one can use rotations depending on the base point

\begin{equation} \label{overshear} (z_1, \ldots, z_n) \mapsto (z_1, \ldots , z_{n-1},  f (z_1, \ldots , z_{n-1}) \cdot z_n), 
\end{equation}

\noindent
where $f \in \cO^*(\C^{n-1})$ is a nowhere vanishing holomorphic function. By simple connectedness of $\C^{n-1}$
the function $f$ is the exponential  $f = e^g$ of  some holomorphic $g \in \cO (\C^{n-1})$. Again such an automorphism  is the 
time-1 map of a complete(ly integrable) holomorphic vector field $\theta = g(z_1, \ldots, z_{n-1}) z_n {\partial \over {\partial z_n}}$. These  automorphisms are called overshears
and the corresponding vector fields  overshear fields. This notion  is not relevant for affine algebraic geometry, 
since nowhere vanishing polynomials on affine space are constant. These maps together with affine automorphism are the first obvious candidates for a generating set of the group
of holomorphic automorphisms $\Aut_{hol} (\C^n)$. We will come to this point in the next section.

The content of our article is an account of the holomorphic side of the problem of understanding the automorphism group of affine space and understanding how to detect affine space
among manifolds. 

The  attempts of getting a better understanding and possibly some structure theorems for the infinite dimensional  groups $\Aut_{hol} (\C^n)$ and $\Aut_{alg} (\C^n)$ started earlier
in the algebraic case  than  in the holomorphic category. We present some of the the  known results in the next section. 

One attempt of better understanding these groups was the Linearization Problem asking 
 about the ways a reductive (for example a finite) group can act on $\C^n$. Or equivalently, how to find all subgroups of the algebraic or the holomorphic automorphism 
 group of $\C^n$ isomorphic to a given reductive group $G$. The conjectured answer was that all such groups are conjugated into the group of linear transformations $\Gl_n (\C)$.

Another intriguing  question is whether this rich group of holomorphic or algebraic automorphisms can be used to  characterize affine space $\C^n$ holomorphically among Stein 
manifolds or algebraically among smooth affine algebraic varieties, i.e.,  subsets of $\C^N$ given by finitely many polynomial equations. Remember  that a Stein manifold is by definition a complex manifold that is holomorphically convex and holomorphically separable. Let be reminded that by the Bishop--Remmert embedding theorem any Stein manifold is a closed complex submanifold of some  $\C^N$ given by finitely many holomorphic equations. Thus Stein manifolds are the natural holomorphic analogue of affine algebraic manifolds. Surprisingly both problems Linearization and characterization of $\C^n$ are intimately related which is the reason why we present them both here. 

It was understood since a long time  that in order to solve the Linearization Problem one needs to be able to characterize $\C^n$. Most easily this is seen at the example of the famous Zariski Cancellation Problem (see Problem \ref{Zariski}) (the author learned this from Hanspeter Kraft, who says that several people knew it).  Suppose 
$X \times \C$ is  algebraically isomorphic (resp. biholomorphic) to $\C^{n+1}$ where $X$ is not algebraically isomorphic (resp. biholomorphic) to $\C^n$. Then the action of the group of 2 elements generated by
$\sigma: X\times \C \to X\times \C$ $(x,t) \mapsto (x, -t)$ acts on $\C^{n+1}\cong X\times \C$ with a fixed point set $X$ not algebraically isomorphic (resp. biholomorphic) to $\C^n$. Thus this action cannot be conjugate to a linear action
since the fixed point sets of linear actions are affine spaces. Thus to answer the linearization question to the positive it is necessary to solve Zariski's Cancellation Problem to the positive  and this in turn would  most likely require a certain characterization of $\C^n$. More  information in this direction can be found in section \ref{affine space}
 and subsection \ref{relations}.  When looking for a such a characterization of $\C^n$
a natural attempt is to use the automorphism group. It is a general phenomenon in mathematics and nature that highly symmetric objects are rare and such objects often can be characterized 
by their symmetries. As an instance in our subject one can name the fact that among bounded domains the unit ball $\B := \{ z=(z_1. z_2, \ldots , z_n) \in \C^n : \vert z_1 \vert^2 +
\vert z_2 \vert^2 + \ldots + \vert z_n \vert^2 < 1 \}$ can be characterized  among bounded strictly pseudoconvex domains by the dimension of its holomorphic automorphism group. Now the algebraic and holomorphic automorphism groups of  are large, at least infinite dimensional as we saw above. What would be a natural "largeness" condition which is exclusively satisfied by the holomorphic automorphism group of $\C^n $. The search for structure on $\Aut_{hol} (\C^n)$  lead to a nice guess, the density property (see the Varolin Toth Conjecture \ref{VTconjecture}). 
 More precisely, Anders\'en and Lempert   proved a remarkable result  which was further developed by Forstneri\v c and Rosay to the so called Anders\'en-Lempert Theorem \ref{AL}. This result shows in  a precise way how big 
$\Aut_{hol} (\C^n)$ is.  The crucial ingredient of the Anders\'en-Lempert Theorem (sometimes called the Anders\'en-Lempert Lemma) was then  naturally  generalized by Varolin to the notion of density property.  Now the search for Stein manifolds with this property started and it is still not clear at the moment whether this property (together with an obvious condition having the right structure as a differentiable manifold) characterizes $\C^n$.
The Anders\'en-Lempert Theorem  in turn was used to construct non-straightenable holomorphic embeddings which then in turn
led to the counterexamples to Holomorphic Linearization described in the last section. Thus the attempt of finding structure led to the study of largeness properties of $\Aut_{hol} (\C^n)$ and a nice application of the theory to embedding questions finally led to
the negative solution of the Linearization Problem. The details of this construction based on the ingenious trick of Asanuma are described in subsection \ref{counterex}.

Thus one  concrete question pointing in the direction of characterizing affine space,  was solved to the negative by another (not yet finished) attempt of charactering affine space. 
Welcome to the story.
 
The article is organized as follows. In Section 2 we give an account of the knowledge about the algebraic and holomorphic automorphism groups of affine space $\C^n$. The notions of density property and flexibility are explained in Section 3 together with their implications, most prominent for the density property this is the Anders\'en-Lempert Theorem \ref{AL-Theorem} and for flexibility this is the Oka principle \ref{Oka}. In Section 4 we give a list of beautiful applications of the theory behind these notions to
 natural geometric problems. In Section 5 we turn to the Holomorphic Linearization Problem. After explaining basic notions like the categorical quotient in Section 5.1. we go through the history in Section 5.2.. In Section 5.3. we explain one of the most spectacular applications of number (2) in the list of applications from Section 4: The counterexamples to Holomorphic Linearization found by Derksen and the author. In Section 5.4. we relate these counterexamples to famous Zariski Cancellation Problem and other long standing problems. In Section 5.5. we explain recent positive results on Holomorphic Linearization which very heavily point to the conjecture that the method of constructing counterexamples explained in Section 5.3. is the only possible method.

 We wish to thank the referee for a careful reading of the manuscript and giving valuable advice to improve the presentation.

\section{Complex Affine Space}\label{affine space}

From now on $n\ge 2$. As we have seen in the introduction there are plenty of  polynomial automorphisms of $\C^n$ given by formula (\ref{shear}).
Together with  the affine automorphisms $\Aff (\C^n)$ consisting of maps  $ Z \mapsto A Z +b$, where $A \in \Gl_n (\C)$ and $b \in \C^n$, they generate the group $\T (\C^n)$ of 
tame automorphisms. Remark that using conjugation with linear automorphisms permuting the coordinates we see that  all automorphisms of the form
 \eqref{shear} with any other variable $z_i$ playing the role of $z_n$   are contained in the group $\T (\C^n)$ of tame automorphisms. The natural
 question arising is whether all algebraic automorphisms are tame automorphisms, i.e., is $\Aut_{alg} (\C^n) = \T (\C^n)$?  Suggesting a  positive answer to that 
 question this was   known under the name "Tame Generator Conjecture". The conjecture was supported by  the classical positive answer for $\C^2$
 \begin{Thm} [Jung 1942 \cite{J}] 
 $\T (\C^2) = \Aut_{alg} (\C^2).$
 \end{Thm}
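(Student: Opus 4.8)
The plan is to prove Jung's theorem that every polynomial automorphism of $\C^2$ is tame, i.e.\ a composition of affine automorphisms and elementary (triangular) automorphisms of the form \eqref{shear}. The overall strategy is a degree-reduction argument: given $\varphi = (P, Q) \in \Aut_{alg}(\C^2)$ which is not affine, one shows that after composing on the left with a suitable affine and elementary automorphism the total degree $\deg P + \deg Y$ strictly decreases, so that by induction on degree $\varphi$ is expressed as a product of affine and elementary maps. The base case is precisely the affine group, which is $\Aff(\C^2) \subset \T(\C^2)$ by definition.

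First I would set up the degree machinery. For $\varphi = (P,Q)$, write $d = \deg P$ and $e = \deg Q$ and let $\bar P, \bar Q$ denote the top-degree homogeneous parts. The Jacobian condition $\det D\varphi \in \C^*$ forces the Jacobian of the top parts, $\frac{\partial \bar P}{\partial z_1}\frac{\partial \bar Q}{\partial z_2} - \frac{\partial \bar P}{\partial z_2}\frac{\partial \bar Q}{\partial z_1}$, to vanish identically (it is the homogeneous part of degree $d+e-2$ of a constant, hence zero unless $d = e = 1$). Over $\C$ this means $\bar P$ and $\bar Q$ have proportional gradients, and a short argument with homogeneous polynomials in two variables shows $\bar P = c\,\ell^{a}$ and $\bar Q = c'\,\ell^{b}$ for a common linear form $\ell$ and exponents $a,b$ with $a = d$, $b = e$. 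The key divisibility fact to extract next is that $d \mid e$ or $e \mid d$: this follows by comparing $\bar P^{e}$ and $\bar Q^{d}$, which are both powers of $\ell$ of the same degree $de$, hence one of $P$ or $Q$ can be used to kill the leading term of the other.

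Next comes the reduction step. Assume without loss of generality $e \le d$, so $e \mid d$, say $d = ke$. After a linear change of coordinates we may take $\ell = z_1$, so $\bar P$ is a constant times $z_1^{d}$ and $\bar Q$ is a constant times $z_1^{e}$. Then the elementary automorphism $\tau \colon (z_1,z_2) \mapsto (z_1 - \lambda z_2^{k},\, z_2)$ (composed with a scaling), for an appropriate constant $\lambda$, has the property that in $\tau \circ \varphi = (P - \lambda Q^{k}, Q)$ the leading $z_1^{d}$-term of $P$ is cancelled by $\lambda \bar Q^{k}$, strictly lowering $\deg P$ while not raising $\deg Q$. Hence $\deg(\tau\circ\varphi) < \deg\varphi$, and since $\tau \in \T(\C^2)$ with $\T(\C^2)$ a group, $\varphi \in \T(\C^2)$ iff $\tau\circ\varphi \in \T(\C^2)$; induction finishes the argument. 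One must also handle the degenerate situation where $\varphi$ is linear in one variable or where the induction bottoms out into $\Aff(\C^2)$, but these are routine.

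The main obstacle — and the step deserving the most care — is the structural claim that the leading forms $\bar P, \bar Q$ are powers of a \emph{common} linear form with the divisibility $e \mid d$ or $d \mid e$. The vanishing of the Jacobian of the leading parts is immediate, but deducing the precise shape $\bar P \sim \ell^d$, $\bar Q \sim \ell^e$ requires a genuine (if elementary) lemma about pairs of homogeneous polynomials in two variables with vanishing Jacobian: one shows that such a pair is algebraically dependent, and in two variables a pair of homogeneous polynomials generating a field of transcendence degree $1$ must be polynomials in a single common irreducible homogeneous factor, which over $\C$ in two variables is a linear form. The degree-matching $\deg P = d$ with the exponent forces $a = d$ etc., and comparing $de$-forms gives the divisibility. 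Once this lemma is in hand, the rest of the proof is the bookkeeping sketched above. It is worth noting that the finiteness of the induction (the degree is a nonnegative integer that strictly decreases) is what makes the tame group \emph{all} of $\Aut_{alg}(\C^2)$; the same degree argument famously fails in dimension $\ge 3$, where the Nagata automorphism shows $\T(\C^3) \subsetneq \Aut_{alg}(\C^3)$.
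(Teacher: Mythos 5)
The paper does not actually prove Jung's theorem; it only states it with a reference to \cite{J}, so there is no in-paper argument to compare against, and I am judging your sketch on its own terms. The skeleton you describe --- induction on $\deg P+\deg Q$, killing the leading form of the higher-degree component by composing with an elementary map built from a power of the other component --- is indeed the classical route, and the observation that $J(\bar P,\bar Q)=0$ unless $d=e=1$ is correct.

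However, there is a genuine gap exactly at the step you yourself flag as deserving the most care, and the justification you propose for it does not work. First, algebraic dependence of two homogeneous forms in two variables does \emph{not} force them to be powers of a common \emph{linear} (or irreducible) form: $F=(z_1z_2)^2$ and $G=(z_1z_2)^3$ are homogeneous with $J(F,G)=0$, yet their common ``root'' is the reducible quadratic $z_1z_2$. The correct general statement is only $\bar P=a\,H^{p}$, $\bar Q=b\,H^{q}$ for some common homogeneous $H$; that $H$ may be taken linear for an automorphism is true, but it already uses the automorphism hypothesis in an essential way, not merely the vanishing of the leading-form Jacobian. Second, and more seriously, the divisibility $e\mid d$ (for $e\le d$) does not follow from ``comparing $\bar P^{e}$ and $\bar Q^{d}$'': the proportionality $\bar P^{e}=c\,\bar Q^{d}$ is perfectly consistent with, say, $\bar P=\ell^{3}$ and $\bar Q=\ell^{2}$, in which case no $\lambda Q^{k}$ can cancel the leading term of $P$ and the reduction step collapses. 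This divisibility of degrees is the actual content of Jung's theorem; every known proof of it (Jung's resolution of the induced map at infinity, van der Kulk's valuation-theoretic argument, the Abhyankar--Moh/Newton--Puiseux approach, or the elementary intertwined inductions of Dicks and of McKay--Wang) requires a substantial further idea beyond $J(\bar P,\bar Q)=0$. As written, your argument assumes the hard lemma rather than proving it. The surrounding bookkeeping (that your $\tau$ lies in $\T(\C^2)$ up to a coordinate swap, that the degree strictly drops so the induction terminates, and the closing remark about the Nagata automorphism in dimension $3$) is fine once that lemma is supplied.
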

 
 However already in 1972 Nagata proposed a candidate for a counterexample in $\C^3$, now well known under the name  Nagata automorphism.
\begin{equation} \label{Nagata}
(z_1, z_2, z_3) \mapsto (z_1 - 2(z_1 z_3 + z_2^2) z_2 - (z_1 z_3 + z_2^2)^2 z_3, z_2 + (z_1 z_3 + z_2^2)z_3, z_3)	
\end{equation}
In order to see that this map is invertible it helps to observe that it preserves the polynomial $u =z_1 z_3 + z_2^2$. 

 It took until 2003 when Umirbaev and Shestakov \cite{US} were able to confirm that the Nagata automorphism is not
  tame. The Tame Generator Conjecture is still open for $n\ge 4$. In particular the Nagata automorphism is stably tame
   \cite{Smith}  more precisely the extension of the Nagata automorphism to $\C^4$ by mapping the fourth coordinate to itself is a tame automorphism of $\C^4$.. 
   
Let us also name a classical structure result for the algebraic automorphism group $\Aut_{alg} (\C^2)$ of $\C^2$.
To formulate it we introduce the group of triangular or elementary  automorphisms, also called the de Jonqui\`eres group:

\begin{equation} \label{Jonq}  \Jonq (\C^n) = \{ (\varphi = (\varphi_1, \ldots ,\varphi_n) \in \Aut_{alg} (\C^n) \ : \  \varphi_i = \varphi_i (z_1, \ldots, z_i) \}
\end{equation}

\begin{Thm} [van der Kulk 1953 \cite{vdK}] \label{vdKu}
The group $\Aut_{alg} (\C^2)$ is a free amalgamated product of the group of affine automorphisms $\Aff (\C^2)$ and the group of elementary 
automorphisms $\Jonq (\C^2)$ over their intersection.	
\end{Thm}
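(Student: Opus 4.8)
The plan is to deduce the amalgam structure from Jung's theorem together with a bookkeeping of degrees. By Jung's theorem $\Aut_{alg}(\C^2) = \T(\C^2)$, and since every $\varphi \in \Jonq(\C^2)$ has the form $(\alpha z_1 + \beta,\, \gamma z_2 + p(z_1))$ with $\alpha,\gamma \in \C^\star$ — hence factors as a diagonal-affine map composed with a shear $(z_1, z_2 + q(z_1))$ — the subgroups $\Aff(\C^2)$ and $\Jonq(\C^2)$ together generate $\T(\C^2) = \Aut_{alg}(\C^2)$. A direct computation identifies $C := \Aff(\C^2) \cap \Jonq(\C^2)$ as the group of affine maps whose first component is independent of $z_2$, that is $(\alpha z_1 + \beta,\, \gamma z_2 + \delta z_1 + \varepsilon)$. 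By the standard criterion for amalgamated products (generation, together with $A \cap B = C$, now established), it then suffices to prove that a reduced alternating word $g_1 g_2 \cdots g_k$ of length $k \ge 1$, with the $g_i$ taken alternately from $\Aff(\C^2) \setminus C$ and $\Jonq(\C^2) \setminus C$, never equals the identity.

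The tool is the degree $\deg\varphi := \max(\deg\varphi_1, \deg\varphi_2)$, together with a record of which of the two coordinates realizes it. First I would isolate the two elementary composition rules. Left-composing an automorphism $\psi$ with a Jonquière map of degree $d = \deg p \ge 2$ leaves $\deg\psi_1$ unchanged and sends $\deg\psi_2$ to $\max(\deg\psi_2,\, d\cdot\deg\psi_1)$. Left-composing with an affine map $a \notin C$ is the decisive step: $a \notin C$ means exactly that the $z_2$-coefficient in the first component of $a$ is nonzero, so the first component of $a \circ \psi$ genuinely involves $\psi_2$.

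With these rules I would run an induction processing the word from the right. A Jonquière factor of degree $d$ makes the second coordinate strictly dominant and multiplies the degree by $d$: the relevant top term, a nonzero constant times a power of the leading form of the (dominant) first coordinate, lies in a strictly higher degree than the surviving part of the old second coordinate, so no cancellation occurs. The following affine factor, lying outside $C$, then transfers this strictly dominant degree back to the first coordinate without loss — again no top-degree cancellation is possible, precisely because the two coordinate degrees are distinct at that moment — readying it to be amplified by the next Jonquière factor. Iterating, the degree of $g_1 \cdots g_k$ equals the product of the degrees of its Jonquière factors, hence is at least $2$ as soon as one such factor occurs; a length-one word that is a single affine map outside $C$ is trivially not the identity. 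Thus no reduced word of positive length is trivial, and the decomposition $\Aut_{alg}(\C^2) = \Aff(\C^2) \ast_{C} \Jonq(\C^2)$ follows.

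The subtle point — and the only place requiring real care once Jung's theorem is granted — is the no-cancellation claim at each alternation: one must verify that the dominance invariant is genuinely maintained and that the interposed affine maps, because they lie outside $C$, can never drop the top degree. It is exactly here that the hypothesis $A \cap B = C$ enters, turning the interaction of the two subgroups into a free one. I would stress that the genuinely hard analysis (the leading-form control behind the classification of polynomial automorphisms of the plane) has already been absorbed into Jung's theorem, which the statement permits us to invoke; what remains for van der Kulk's refinement is the combinatorial degree bookkeeping sketched above.
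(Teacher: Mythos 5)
The paper does not prove Theorem \ref{vdKu}; it is stated as a classical result with only the citation \cite{vdK}, so there is no in-paper argument to compare against. Your proposal is the standard proof of the amalgam structure modulo Jung's theorem, and it is essentially correct: generation follows because every element of $\Jonq(\C^2)$ has the form $(\alpha z_1+\beta,\ \gamma z_2+p(z_1))$ (forced by the constancy of the Jacobian determinant, a point worth making explicit), the intersection $C$ is as you describe, and the heart of the matter is the multiplicativity of degree along reduced alternating words. Your strict-dominance invariant is exactly the right mechanism: after a Jonqui\`eres factor of degree $d\ge 2$ the second coordinate has degree $d$ times the running product and strictly exceeds the first, so the subsequent affine factor outside $C$ (whose first component has nonzero $z_2$-coefficient) transfers that degree to the first coordinate with no possible top-degree cancellation, and the next Jonqui\`eres factor again multiplies strictly, since $d'\cdot P>P$. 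Hence the degree of a reduced word equals the product of the degrees of its Jonqui\`eres factors, which rules out the identity for any word of positive length. The only caveat is that this is a sketch: a complete write-up should state the induction hypothesis on the pair of coordinate degrees (and which one is dominant) precisely, and check the base cases according to whether the rightmost factor is affine or triangular; but nothing in the argument fails, and it is the classical van der Kulk--Nagata bookkeeping one finds in the literature.
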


Inspired by the algebraic results and the Tame Generator  Conjecture   W. Rudin together with P. Ahern proved  the analogous structure result in the holomorphic category.
 Here the group generated by holomorphic shears (see equation \eqref{shear}), overshears (see equation \eqref{overshear})
and affine automorphisms is called the overshear group $\Osh (\C^n)$. The definition of the holomorphic 
de Jonqui\`eres group $\Jonq_{hol} (\C^n)$ is clear. For convenience of the reader we write their members for $\C^2$
$$ (z, w) \mapsto (az+b, e^{f(z)} w+ g(z))$$
where $a\in \C^*$, $b\in \C$ are numbers and  $f,g \in \cO (\C)$ holomorphic functions.

\begin{Thm} [Ahern, Rudin \cite{AR}] \label{Rudin}
The group $\Osh (\C^2)$ is a free amalgamated product of  $\Aff (\C^2)$ and    $\Jonq_{hol} (\C^2)$ over their intersection.	
\end{Thm}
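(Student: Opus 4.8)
The plan is to follow the classical amalgamated-product strategy of van der Kulk and Jung, transported to the holomorphic setting by the methods of Ahern and Rudin. Recall that a group $G$ generated by two subgroups $A$ and $B$ is their free amalgamated product over $A\cap B$ precisely when no reduced word $a_1 b_1 a_2 b_2\cdots$ with all $a_i\in A\setminus B$ and all $b_i\in B\setminus A$ represents the identity. So I would first fix the two subgroups $A=\Aff(\C^2)$ and $B=\Jonq_{hol}(\C^2)$ inside $\Osh(\C^2)$, describe their intersection (the affine maps of triangular shape, i.e. $(z,w)\mapsto(az+b,cw+dz+e)$ with $a,c\in\C^*$), and verify that $A$ and $B$ together generate $\Osh(\C^2)$: this is immediate since a shear $(z,w)\mapsto(z,w+g(z))$ lies in $B$, an overshear $(z,w)\mapsto(z,e^{f(z)}w)$ lies in $B$, and $\Aff(\C^2)=A$.

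The heart of the argument is the non-triviality of reduced words. Following Jung--van der Kulk, I would attach to each holomorphic de Jonqui\`eres map $(z,w)\mapsto(az+b,e^{f(z)}w+g(z))$ the \emph{degree} $\deg_w$ of its second component in the variable $w$ (here always $1$, but in the more refined bookkeeping one tracks the behavior at infinity), and more usefully track how a nontrivial element of $B\setminus A$ acts on the pencil of ``horizontal'' lines $\{w=\mathrm{const}\}$ versus how a nontrivial element of $A\setminus B$ acts on that pencil. The key geometric observation is that an element of $\Jonq_{hol}\setminus\Aff$ preserves the fibration by the lines $\{z=\mathrm{const}\}$ and acts on the generic such line by a genuinely non-affine (in fact transcendental, unless $g$ is polynomial) biholomorphism of $\C$, whereas an element of $\Aff\setminus\Jonq_{hol}$ moves the fibration $\{z=\mathrm{const}\}$ to a transversal one. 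Composing alternately, one shows by induction on word length that the image of a generic fiber $\{z=c\}$ under a reduced word of length $\ge 1$ is a curve that is either non-linear or not of the form $\{z=c'\}$, hence the word cannot be the identity; this is exactly the ``ping-pong''/reduction mechanism, and making it precise in the holomorphic case — where one cannot simply count polynomial degrees — is where Ahern and Rudin's analysis of the growth of the components does the work.

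I expect the main obstacle to be precisely that last point: in the algebraic category the proof is driven by the multiplicativity of polynomial degree under composition of triangular and affine maps, giving a clean numerical invariant that strictly increases along reduced words; in the holomorphic category the ``functions of $z$'' appearing in a de Jonqui\`eres map are arbitrary entire functions, so there is no degree to count. The substitute, which I would take from \cite{AR}, is to control the \emph{order of growth} (or the behavior along suitable sequences $z\to\infty$, or the structure of the indeterminacy/growth at infinity of the associated birational-type data) of the components of a reduced word, showing that an alternating composition strictly increases this growth-theoretic invariant and therefore cannot close up to the identity. One must check carefully that the ``cancellation'' phenomena that could a priori destroy growth — e.g. an overshear factor $e^{f(z)}$ from one syllable being killed by $e^{-f(z)}$ contributed by interaction with an affine syllable — genuinely cannot occur for reduced words, because an affine map $(z,w)\mapsto(az+b,cw+dz+e)$ that is not triangular mixes $z$ into $w$ in a way that is incompatible with such a cancellation.

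Once the no-reduced-word-is-trivial property is established, the conclusion is formal: by the standard characterization of free products with amalgamation (e.g. via normal forms, or via the action on the associated Bass--Serre tree), $\Osh(\C^2)=\Aff(\C^2)\ast_{\Aff(\C^2)\cap\Jonq_{hol}(\C^2)}\Jonq_{hol}(\C^2)$. I would close by remarking that this mirrors Theorem~\ref{vdKu} exactly, with the overshear group playing the holomorphic role of $\Aut_{alg}(\C^2)$ and the transcendental functions $f,g\in\cO(\C)$ replacing polynomials, and that no genuinely new generators beyond shears, overshears and affine maps can appear — which is the structural content of the statement.
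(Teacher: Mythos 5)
The paper states this theorem as a cited result from \cite{AR} and contains no proof of it, so there is nothing internal to compare your sketch against; judged on its own terms, your outline has the right architecture (identify $A=\Aff(\C^2)$ and $B=\Jonq_{hol}(\C^2)$, compute $A\cap B$ as the triangular affine maps, check generation — which is indeed immediate in both directions — and reduce everything to showing that no reduced word is the identity), but the mechanism you propose for that last, decisive step is broken. A de Jonqui\`eres map $(z,w)\mapsto(az+b,\,e^{f(z)}w+g(z))$ restricted to a fiber $\{z=c\}$ is $w\mapsto e^{f(c)}w+g(c)$, which is an \emph{affine} map of $\C$; indeed every biholomorphism of $\C$ is affine, so the ``genuinely non-affine biholomorphism of $\C$'' you invoke does not exist. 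The nonlinearity of an element of $\Jonq_{hol}\setminus\Aff$ lives entirely in the dependence on the base variable $z$, not in its action on individual fibers. Your induction claim then already fails at word length one: a single de Jonqui\`eres element sends every fiber $\{z=c\}$ to the fiber $\{z=ac+b\}$, which is both linear and of the form $\{z=c'\}$. Nor does the ping-pong close up for longer words: since the first component of a de Jonqui\`eres map depends only on $z$, such maps preserve both verticality and non-verticality of curves, while a non-triangular affine map can send a non-vertical line back to a vertical one (the swap $(z,w)\mapsto(w,z)$ does so to $\{w=c\}$). So ``the image of a generic fiber is non-vertical or non-linear'' is not an invariant that propagates along reduced words without carrying substantially more data.

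The second, and more fundamental, gap is that the step which actually carries the theorem — a quantitative substitute for polynomial degree that is strictly monotone along reduced words, together with the verification that the transcendental contribution of an inner de Jonqui\`eres syllable cannot be cancelled by the interleaved non-triangular affine syllables — is not supplied but deferred to \cite{AR}, i.e.\ to the very result being proved. You correctly diagnose that this is exactly where the holomorphic case departs from Jung--van der Kulk (there is no degree to count, $e^{f}$ may have arbitrary order of growth, and $g$ is an arbitrary entire function), but ``control the order of growth'' is a restatement of the problem rather than an argument. To make the outline into a proof you would have to name the invariant precisely (in \cite{AR} this is done by analysing the asymptotic behaviour of the components of a reduced composition along suitable one-dimensional slices), prove its monotonicity by induction on the number of syllables, and only then invoke the standard normal-form characterization of amalgamated products — that final formal step, at least, is fine as you state it.
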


Similar structure results for higher dimensions $n\ge 3$ do not hold. We invite the reader to write the identity 
automorphism as a non-trivial product of triangular and affine automorphisms. Only for two dimensional affine manifolds similar structure 
results are known. For example automorphism groups   of Danielewski surfaces admit structures of free amalgamated products. Danielewski surfaces  are the surfaces given by $D:= \{(u, v, z) \in \C^3 \ : \ u v = f(z)\}$, where $f$ is either a 
polynomial or a holomorphic function with simple zero's only. This  condition of simple zeros ensures the smoothness of the 
corresponding affine algebraic variety (resp. Stein manifold).
In the algebraic category these structure  results are due to Makar-Limanov \cite{MLDani} and in the holomorphic to 
Andrist, Lind and the author \cite{AKL}, see these reference for relevant definitions of the involved groups in the amalgamated product structure.

Rudin very much promoted the analogue of the Tame Generator Conjecture in the holomorphic category. Is $\Osh (\C^n)
= \Aut_{hol} (\C^n)?$

In their seminal paper Anders\'en and Lempert \cite{AL} gave two answers to this question, the first of them solving the
 problem in the negative has not been of much future importance. The group $\Aut_{hol} (\C^n)$ is a topological group when equipped 
 with the topology of uniform convergence on compacts, the usual c.-o.-topology. The following metric $d$ on $\Aut_{hol} (\C^n)$
 induces this topology and makes this topological space  a complete metric space, i.e., a Frech\' et space (\cite{AL}  Proposition 6.4.).

 For $\Psi, \Phi \in \Aut_{hol} (\C^n)$, and $r=1, 2, \ldots $ put
 
 \begin{equation} \begin{split}
 d_r (\Psi, \Phi ) = \max \{ \max_{ \vert z \vert \le r}  \vert \Phi (z) - \Psi (z) \vert, \max_{ \vert z \vert \le r}  \vert \Phi^{-1} (z) - \Psi^{-1} (z) \vert \}, \\
  d(\Psi, \Phi ) = \sum_{r=1}^\infty \min (1, d_r (\Psi, \Phi ) ) 2^{-r} \end{split}
  \end{equation}

 \begin{Thm} For all $n \ge 2$ the overshear group $\Osh (\C^n)$ is meagre in $\Aut_{hol} (\C^n)$.
 	\end{Thm}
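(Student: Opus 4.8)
The plan is to realise $\Osh(\C^n)$ as a countable union of closed nowhere dense subsets of $\Aut_{hol}(\C^n)$ and then to invoke the Baire category theorem, which applies because the metric $d$ is complete (\cite{AL}, Proposition~6.4, quoted above). Thus completeness is free, and the entire burden lies in choosing the right countable filtration and proving that each piece is nowhere dense. First I would filter by \emph{word length}: for $N\ge 0$ let $W_N\subseteq\Aut_{hol}(\C^n)$ be the set of automorphisms that can be written as a composition of at most $N$ generators, where a generator is an affine map, a shear \eqref{shear}, or an overshear \eqref{overshear} in any of the $n$ coordinate directions. Then $W_0=\{\id\}$, $W_N\subseteq W_{N+1}$, and since every element of the overshear group is by definition a \emph{finite} composition of generators we have $\Osh(\C^n)=\bigcup_{N\ge 0}W_N$, a countable union. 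It therefore suffices to prove that each closure $\overline{W_N}$ has empty interior.

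This last point is the whole difficulty, and it cannot be reached by soft arguments. One cannot hope to make $W_N$ relatively compact by bounding the data of the generators: bounding a shear function $f\in\cO(\C^{n-1})$ on a ball $\{|z'|\le R\}$ gives no control on larger balls, and forming a composition forces one to evaluate the outer factors far outside that ball, so Montel's theorem yields no normal family; moreover the parameter space $\cO(\C^{n-1})$ is an infinite dimensional Fr\'echet space, hence neither locally compact nor $\sigma$-compact. Descriptive set theory is equally powerless: although each $W_N$ is an analytic (Souslin) set, analytic sets may be comeagre, and indeed the Anders\'en--Lempert density theorem \cite{AL} asserts that $\Osh(\C^n)$ is \emph{dense}. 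The statement to be proved is thus that $\Osh(\C^n)$ is simultaneously dense and meagre, exactly as $\mathbb{Q}$ sits inside $\mathbb{R}$, and no purely topological or measure-theoretic input can detect this. A growth estimate will not help either: a single shear $\sigma_f$ with $f$ of arbitrarily large growth already lies in $W_1$, so the maximal modulus of $\Phi$ and of $\Phi^{-1}$, the quantities appearing in $d_r$, are already unbounded on $W_1$.

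The genuine obstruction, then, is to isolate a \emph{word-length sensitive} invariant that is insensitive to the size of the individual generators. The mechanism I would pursue is a \emph{differential} one: a single shear in the $n$-th direction satisfies the closed pointwise relations $\Phi_j=z_j$ $(j<n)$, $\partial\Phi_n/\partial z_n=1$ and $\partial^2\Phi_n/\partial z_n^2=0$, regardless of how large $f$ is, and more generally each generator is annihilated by an explicit holomorphic differential operator of bounded order. The aim is to show that every $\Phi\in W_N$ satisfies a nontrivial holomorphic differential relation whose order is bounded by a function of $N$ alone. Such relations are stable under convergence in the topology of $d$, since derivatives converge locally uniformly by the Cauchy estimates, so they pass to $\overline{W_N}$; hence $\overline{W_N}$ is contained in the common zero set of a fixed family of differential operators. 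The proof would then be finished by producing, inside an arbitrary $d$-ball, a single automorphism violating \emph{every} differential relation of order $\le N$ — for instance a sufficiently transcendental automorphism whose jets at a suitable sequence of points are algebraically independent up to the relevant order — which forces the interior of $\overline{W_N}$ to be empty. Assembling this over all $N$ and applying Baire yields the result.

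The hard part will be the second half of this scheme, namely establishing that composing $N$ generators cannot destroy all differential relations of some order $c(N)$: a single generator kills all dependence on one direction up to first order, but composition mixes the directions, and controlling how fast the order of the surviving relation grows with the number of factors is precisely the delicate combinatorial-differential estimate on which the theorem rests. Should a clean universal relation prove elusive, the fallback is a direct extremal construction that repackages the same core: fix a countable dense set of automorphisms and, around each and for every $N$, build by a normal-families argument an automorphism that cannot be approximated to within $2^{-N}$ by any composition of $N$ generators. The sets $\{\Phi : d(\Phi,W_N)>2^{-N}\}$ are open, and establishing their density is exactly the nowhere-density statement; their intersection is then a dense $G_\delta$ disjoint from $\Osh(\C^n)$, whose meagre complement contains $\Osh(\C^n)$.
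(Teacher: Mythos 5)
Your framework is the right one and is the one used in \cite{AL} (the paper itself states this theorem without proof, attributing it to Anders\'en--Lempert): filter $\Osh(\C^n)=\bigcup_N W_N$ by word length, use completeness of $d$, and reduce everything to showing that each $\overline{W_N}$ has empty interior. Your discussion of why soft arguments (Montel, $\sigma$-compactness, descriptive set theory, growth estimates) cannot work is accurate and shows you have located the difficulty correctly. But locating the difficulty is all the proposal does: the one step that carries the entire theorem --- nowhere density of $\overline{W_N}$ --- is never established. The ``differential relation'' mechanism is left as a conjecture (you say yourself that controlling how the order of a surviving relation grows under composition ``is precisely the delicate \ldots estimate on which the theorem rests''), and in the strong form you state it --- a single holomorphic differential operator of order $c(N)$ annihilating every element of $W_N$ --- it is far from clear that such an operator exists, since the relation would have to hold identically over the infinite-dimensional parameter space of all $N$-tuples of shear/overshear data in all coordinate directions. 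The ``fallback'' in the last paragraph is explicitly circular: you concede that proving density of $\{\Phi:d(\Phi,W_N)>2^{-N}\}$ ``is exactly the nowhere-density statement.'' So the proposal is a correct reduction plus an unproved conjecture, not a proof.

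For comparison, the argument that actually closes this gap in \cite{AL} is a finite-dimensional counting argument on Taylor coefficients rather than a universal differential relation: the $k$-jet at a point of a composition of $N$ generators is an algebraic function of the $k$-jets of the $N$ individual data, so it ranges over a set of dimension $O_N(k^{n-1})$ (jets of functions of $n-1$ variables, $N$ of them, plus the affine parameters), whereas the $k$-jets of automorphisms near a given one fill out a set of dimension comparable to $\dim J^k(\C^n,\C^n)\sim k^n$; for $k$ large relative to $N$ the image is therefore contained in a proper subvariety of jet space, which is a closed condition under $d$-convergence by the Cauchy estimates. This is close in spirit to your jet-theoretic instinct, but it replaces the search for an explicit annihilating operator (which may not exist in any usable form) by a dimension count, and it still requires the nontrivial verification --- which your proposal also gestures at but does not supply --- that the $k$-jets of automorphisms in any $d$-ball genuinely fill an open subset of the jet space. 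Until one of these two substitutes is carried out, the proof is incomplete.
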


Interestingly for $n\ge 3$ no concrete holomorphic automorphism is known which is not contained in $\Osh (\C^n)$.
It is for example not known whether the Nagata automorphism from formula $(\ref{Nagata})$ is contained in $\Osh (\C^3)$. 
In dimension $2$ it follows easily from the structure Theorem \ref{Rudin} that automorphisms  of the form $$(z_1, z_2) \mapsto 
(e^{f(z_1z_2)} z_1, e^{-f(z_1z_2)} z_2)$$ are not contained in $\Osh (\C^2)$ \cite{Andersen}, \cite{KK}.

The second answer Anders\'en and Lempert gave to Rudin's question  is of great value, it has been the starting point of an enormous development leading to many beautiful results 
of geometric nature in complex analysis. This still extremely active area will be described in the next section.

\begin{Thm}\label{AL} For all $n \ge 2$ the overshear group $\Osh (\C^n)$ is dense  in $\Aut_{hol} (\C^n)$ with respect 
to the
c.-o.-topology.
	\end{Thm}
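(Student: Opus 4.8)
The plan is to reduce the global approximation to a local one on balls, and then to realize the required local automorphisms as flows of vector fields lying in the Lie algebra generated by shear, overshear and affine fields. For the metric $d$ it is enough to prove: for every $\Phi\in\Aut_{hol}(\C^n)$, every ball $B_R=\{|z|<R\}$ and every $\varepsilon>0$ there is a finite composition $\Psi$ of shears, overshears and affine maps, i.e. $\Psi\in\Osh(\C^n)$, with $\sup_{B_R}|\Psi-\Phi|<\varepsilon$ and $\sup_{B_R}|\Psi^{-1}-\Phi^{-1}|<\varepsilon$; indeed $\min(1,d_r)2^{-r}\le 2^{-r}$ makes the tail of the defining series negligible, so only finitely many $d_r$ matter.

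First I would build an isotopy from $\id$ to $\Phi$ inside $\Aut_{hol}(\C^n)$. After composing with the linear automorphism $Z\mapsto\Phi'(0)^{-1}Z$ we may assume $\Phi'(0)=\id$; then $\Phi_t(z):=t^{-1}\Phi(tz)$ for $t\in(0,1]$ together with $\Phi_0:=\id$ is a continuous path in $\Aut_{hol}(\C^n)$, since each $\Phi_t$ is a composition of $\Phi$ with scalings and $\Phi(w)=w+O(|w|^2)$ forces $t^{-1}\Phi(tz)\to z$ uniformly on compacts (and likewise for the inverses). Let $X_t$ be the time-dependent holomorphic vector field on $\C^n$ with $\tfrac{d}{dt}\Phi_t=X_t\circ\Phi_t$, so $\Phi=\Phi_1$ is its time-one flow. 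Partitioning $[0,1]$ into $N$ equal pieces and freezing $X_t$ to the time-independent field $X_{t_k}$ on $[t_k,t_{k+1}]$ produces, for $N$ large, a composition $\varphi^{X_{t_{N-1}}}_{1/N}\circ\cdots\circ\varphi^{X_{t_0}}_{1/N}$ that is $\varepsilon$-close to $\Phi$ on $B_R$ (the short-time flows of the $X_{t_k}$ are defined on a fixed larger ball once $N$ is large). Finally, by Runge approximation on the polynomially convex ball $\overline{B_{R+1}}$, each holomorphic field $X_{t_k}$ may be replaced there by a polynomial vector field $P_k$, changing the short-time flows only slightly. Thus $\Phi$ is approximated on $B_R$ by a composition of time-$1/N$ flows of polynomial vector fields.

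The remaining and central step is to approximate the short-time flow of a polynomial vector field $P$, on a compact set, by compositions of shears, overshears and affine maps. This uses two ingredients. The first is the Andersén–Lempert Lemma: the Lie algebra generated by the shear fields $f\,\partial_{z_i}$ ($f$ independent of $z_i$), the overshear fields $g\,z_i\,\partial_{z_i}$ ($g$ independent of $z_i$) and the affine fields contains every polynomial vector field on $\C^n$. One proves it by explicit bracket computations, for example
\[ \bigl[\, f(z_1)\,\partial_{z_2},\ g(z_2)\,\partial_{z_1}\,\bigr] = f(z_1)\,g'(z_2)\,\partial_{z_1} - g(z_2)\,f'(z_1)\,\partial_{z_2}, \]
together with the elementary but crucial observation that $hY$ is again a complete field whenever $Y$ is complete and $Y(h)=0$, which is what keeps the construction of the missing monomial fields $z^\alpha\partial_{z_i}$ inside the class of complete fields; the divergence is then corrected using affine and overshear fields. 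The second ingredient is that, uniformly on compacts and for small time, flows of sums are approximated by alternating compositions of flows ($\varphi^{X+Y}_s=\lim_{m\to\infty}(\varphi^X_{s/m}\circ\varphi^Y_{s/m})^m$) and flows of Lie brackets by $m$-fold iterated commutators of the flows with time parameter $\sqrt{s/m}$ (Euler's method). Since the time-$s$ flow of a shear field is a shear, of an overshear field an overshear, and of an affine field an affine map, this realizes $\varphi^P_{1/N}$ on the relevant ball as a limit of elements of $\Osh(\C^n)$. Composing all these approximations — working on a finite nested family of balls $B_R\subset B'\subset B''\subset\cdots$ and using Cauchy estimates and equicontinuity to keep the compounded errors small — yields $\Psi\in\Osh(\C^n)$ with $\sup_{B_R}|\Psi-\Phi|<\varepsilon$; closeness of the inverses follows from the standard fact that $C^0$-approximation of a biholomorphism on a sufficiently large ball forces $C^0$-approximation of its inverse on $B_R$, or alternatively from the fact that the inverse of $\Psi$ is again a composition of shears, overshears and affine maps approximating the analogous decomposition of $\Phi^{-1}$.

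I expect the Andersén–Lempert Lemma to be the main obstacle: producing all polynomial vector fields from the few obvious complete fields is precisely where the special structure of $\C^n$ enters, and the bracket identities must be organized carefully so that one never leaves the class of complete fields. A secondary, purely technical burden is the quantitative bookkeeping — bounding the domains of the short-time flows of the generally incomplete fields $X_{t_k}$ and $P_k$, and tracking how the many successive approximations compound — which is handled by working on slightly enlarged balls and invoking uniform estimates to descend back to $B_R$.
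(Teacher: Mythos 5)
Your proposal is correct and follows essentially the same route the paper takes: the paper derives Theorem \ref{AL} by combining the path-connectedness of $\Aut_{hol}(\C^n)$ with the Anders\'en--Lempert Theorem \ref{AL-Theorem}, whose ``Moreover'' clause lets one realize the approximating automorphisms as finite compositions of flows of shear and overshear fields --- and your isotopy--discretize--Runge--recombine scheme is exactly the proof of that theorem specialized to $\C^n$. The one substantive difference is in the key lemma: you generate all polynomial vector fields in the \emph{Lie algebra} spanned by shear, overshear and affine fields and therefore must also invoke the commutator-flow approximation $\varphi^{[X,Y]}_s \approx \bigl(\varphi^{-Y}_{\sqrt{s/m}}\circ\varphi^{-X}_{\sqrt{s/m}}\circ\varphi^{Y}_{\sqrt{s/m}}\circ\varphi^{X}_{\sqrt{s/m}}\bigr)^m$, whereas Anders\'en and Lempert prove the sharper statement --- quoted in the Remark following Theorem \ref{AL-Theorem} --- that every polynomial vector field on $\C^n$ is a finite \emph{sum} of shear and overshear fields (and their linear conjugates), so that only the Lie--Trotter product formula for sums is needed. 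Both versions work; the sum decomposition buys a cleaner convergence argument and is what makes the ``no Lie brackets needed'' remark in the paper possible, while your bracket-based version is the one that generalizes to the density property on other manifolds. Two small points to tidy up: before setting $\Phi_t(z)=t^{-1}\Phi(tz)$ you must also normalize $\Phi(0)=0$ by composing with a translation (your expansion $\Phi(w)=w+O(|w|^2)$ silently assumes this), and your closing alternative for the inverses (``the inverse of $\Psi$ approximates the analogous decomposition of $\Phi^{-1}$'') should be dropped in favor of the first argument you give, since the approximation of $\Phi$ by $\Psi$ does not by itself produce a decomposition of $\Phi^{-1}$.
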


This exemplifies the fact  that finite compositions are not the appropriate notion for complex analysis when it comes to 
infinite dimensional automorphism groups. Rather limits of finite compositions are the appropriate notion.

Finally let us come to the Holomorphic Linearization Problem that was long standing and could be solved only 
with the methods  of the area  which is described in the next section and which was  started by the seminal paper of Anders\'en and Lempert \cite{AL}.

In what follows $G$ will be a complex  reductive group. For us a complex  reductive group $G$ is a complex Lie group which is the universal 
complexification $G = K^\C$ of its maximal compact subgroup $K$. Examples of such groups are $\Gl_n (\C ) = \U_n^\C$, 
$\Sl_n 
(\C ) = \SU_n ^\C$ or even unconnected groups like $\OO_n (\C)$ which has $2$ connected components. Even finite groups are 
most interesting (which by definition are the universal complexification of themselves). Suppose a reductive group $G$ is 
acting (effectively)  holomorphically on $\C^n$, $n\ge 2$, in other words,  let a holomorphic  
 (injective) group homomorphism  $G\hookrightarrow \Aut_{hol} (\C^n)$ be given.

\begin{Prob}[\bf Holomorphic Linearization Problem] \cite{Huck}

Does there exist a holomorphic change of variables $\alpha \in \Aut_{hol} (\C^n)$ such that $\alpha \circ G \circ \alpha^{-1} $ is linear?
\end{Prob}
If the answer is positive, we say the action is linearizable. The problem can be equivalently formulated for compact (real-analytic Lie) groups instead
due to the following result of the author. Using the discussion about completeness of vector fields at the beginning of section \ref{defandmain} one can generalize this result from $\C^n$ to any Stein manifold with the density property.

\begin{Thm}  \cite{K97} \label{complex} If a real Lie group $H$ acts on $\C^n$ by holomorphic transformations, then this action extends (uniquely) to a holomorphic action
of the complexification $H^\C$.
\end{Thm}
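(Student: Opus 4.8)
The plan is to descend to the infinitesimal level, complexify there, and integrate back up.

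\textbf{Step 1 (infinitesimalization).} I would first note that the given homomorphism $\alpha : H \to \Aut_{hol}(\C^n)$ may be taken real-analytic: a continuous action of a real Lie group by diffeomorphisms is smooth (Bochner--Montgomery), and here the diffeomorphisms are biholomorphic, so the action map $H \times \C^n \to \C^n$ is real-analytic and holomorphic in the $\C^n$-variable. Differentiating $t \mapsto \exp(tX)\cdot z$ at $t=0$ produces, for each $X \in \mathfrak{h}$, a holomorphic vector field $\rho(X)$ on $\C^n$, and $\rho : \mathfrak{h} \to \VF(\C^n)$ is a homomorphism of real Lie algebras into the Lie algebra of holomorphic vector fields (up to the usual sign convention for left actions). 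Its image consists of $\R$-complete fields, since the real flow of $\rho(X)$ is exactly the one-parameter group $t \mapsto \alpha(\exp tX)$.

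\textbf{Step 2 (complexify the infinitesimal action).} The space $\VF(\C^n)$ is a \emph{complex} Lie algebra: multiplication by $i$ is the complex structure on the holomorphic tangent bundle and the bracket is $\C$-bilinear. Hence the real homomorphism $\rho$ extends uniquely to a $\C$-linear Lie algebra homomorphism $\rho^\C : \mathfrak{h}^\C \to \VF(\C^n)$, $\rho^\C(X + iY) = \rho(X) + i\,\rho(Y)$, by the universal property of the complexification $\mathfrak{h}^\C = \mathfrak{h} \oplus i\mathfrak{h}$. This step is purely formal.

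\textbf{Step 3 (the crux: completeness in complex time).} To integrate $\rho^\C$ to a group action via Palais' globalization theorem I must show that \emph{every} field $\rho^\C(\xi)$, $\xi \in \mathfrak{h}^\C$, is complete, and I expect this to be the main obstacle. The decisive case is $\xi = iX$, i.e. that $\rho(X)$ is not merely $\R$-complete but $\C$-complete, so that its real flow extends from $\R$ to a holomorphic $(\C,+)$-action $\Phi^X : \C \times \C^n \to \C^n$. Because $\rho(X)$ is holomorphic and its flow $\phi_t$ is real-analytic in $t$, for each fixed $z$ the orbit $t \mapsto \phi_t(z)$ continues holomorphically to a strip $\{\,|\Im s| < \delta(z)\,\}$, and analytic continuation of the flow identity shows this extension solves $\gamma' = \rho(X)(\gamma)$; the only thing that can fail is escape to infinity in finite imaginary time, i.e. $\delta(z) < \infty$. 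To rule this out I would exploit that the time-$t$ maps are genuine automorphisms of \emph{all} of $\C^n$ and use the group law $\Phi^X_{s+s'} = \Phi^X_s \circ \Phi^X_{s'}$ to propagate a uniform imaginary strip of definition, bootstrapping $\delta \equiv \infty$ (this is precisely the completeness-of-holomorphic-vector-fields input). The genuinely subtle point, which must not be glossed over, is that completeness of the generators $\rho(X)$ and of their scalar multiples does not by itself give completeness of a general sum $\rho(X) + i\,\rho(Y)$; establishing completeness for every $\xi \in \mathfrak{h}^\C$, using the finite dimensionality of $\mathfrak{h}^\C$ and the special geometry of $\C^n$, is the heart of the argument.

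\textbf{Step 4 (integrate, descend, uniqueness).} Once $\rho^\C(\mathfrak{h}^\C)$ is a finite-dimensional Lie algebra of complete holomorphic vector fields, Palais' theorem yields an action of the connected simply connected complex Lie group $G$ with $\Lie G = \mathfrak{h}^\C$ whose infinitesimal action is $\rho^\C$; since the fields and their flows are holomorphic, this action is holomorphic. By construction its restriction along $\mathfrak{h} \hookrightarrow \mathfrak{h}^\C$ integrates to the original $H$-action on the identity component, so the $G$-action descends through the covering/quotient presenting the universal complexification to a holomorphic action of $H^\C$ extending $\alpha \circ \iota^{-1}$, where $\iota : H \to H^\C$ is the canonical morphism; non-identity components of $H$ are handled componentwise. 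Finally, \emph{uniqueness} is immediate by the identity theorem: two holomorphic extensions $\beta_1,\beta_2 : H^\C \to \Aut_{hol}(\C^n)$ agree on $\iota(H)$, and for each fixed $z$ the holomorphic maps $g \mapsto \beta_i(g)(z)$ on the connected complex group $H^\C$ then coincide, since they agree on the totally real generating subset $\iota(H)$.
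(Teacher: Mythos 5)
Your overall architecture is the one the paper has in mind: differentiate the action to a homomorphism $\rho:\mathfrak{h}\to\VF_{hol}(\C^n)$ into $\R$-complete holomorphic fields, complexify $\C$-linearly to $\rho^\C$ on $\mathfrak{h}^\C$, establish completeness, and integrate by Palais; the uniqueness argument via the identity principle on the totally real subset $\iota(H)$ is also exactly right. One small simplification you are entitled to: Palais' globalization theorem only requires the finite-dimensional Lie algebra to be \emph{generated} by complete fields (completeness of every element then comes for free from the resulting group action), so the ``genuinely subtle point'' about sums $\rho(X)+i\,\rho(Y)$ dissolves once $\rho(X)$ and $i\,\rho(X)$ are known complete for $X$ in a basis of $\mathfrak{h}$. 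The entire burden therefore sits on a single claim: every $\R$-complete holomorphic vector field on $\C^n$ is $\C$-complete.

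That claim is precisely where your Step 3 has a genuine gap. The bootstrap you propose -- using the group law $\Phi_{s+s'}=\Phi_s\circ\Phi_{s'}$ to propagate a uniform imaginary strip -- does not work: the real group law only shows that the radius $\delta(z)$ of holomorphic extension in imaginary time is constant along \emph{real} orbits, whereas to iterate a step of size $i\epsilon$ you need a lower bound on $\delta$ at the \emph{new} point $\Phi_{i\epsilon}(z)$, which lies off the real orbit and where $\delta$ may a priori be arbitrarily small; no uniform strip is obtained, and indeed the implication is false on general manifolds ($\R$-complete fields on the disc or on $\udisk\times\C$ need not be $\C$-complete), so some Liouville-type property of $\C^n$ must enter. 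The ingredient the paper actually uses, and names explicitly in the discussion of completeness in Section 3.1.1, is the theorem of Ahern--Flores--Rosay \cite{AFR}: on a complex manifold on which bounded plurisubharmonic functions are constant -- true for $\C^n$ by Liouville, and more generally for Stein manifolds with the density property -- every $\R^{+}$-complete holomorphic vector field is automatically $\C$-complete. (The proof there goes through plurisubharmonicity of $-\log$ of the escape time on the pseudoconvex flow domain, not through the group law.) Citing or reproving that result is the missing step; with it inserted, the rest of your argument goes through.
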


Thus an  action of a compact group $K$ extends to an action of the reductive group  $G= K^\C$ and is linearizable, iff the action of $G$ is. This follows directly from the identity principle for holomorphic mappings since $K$ is totally real of maximal dimension in $G$.

As a sort of curiosity, it also follows  from the above theorem that the real Lie groups
 $\widetilde {\Sl_k (\R) }$ for $k\ge 2$  
 (the universal covering of $\Sl _k ( \R ) $) cannot act effectively on $\C^n$ since they are not injective into their  universal complexifications $\Sl_k (\C)$.

The linearization  question in the algebraic setting is classical and has drawn much attention. We don't want to go into these details, we just want to explain the counterexamples to the Algebraic Linearization Problem first constructed by Gerald Schwarz \cite{Schwarz89}.
They are known as $G$-vector bundles over representations. A representation is  a $\C^k$ with a linear action (representation) of $G$, i.e., 
a group homomorphism $ \alpha : G \to \Gl_k (\C)$. By the Quillen-Suslin solution to the Serre question, every algebraic vector bundle  (say of rank
 $l$) over $\C^k$ is trivial, i.e., isomorphic to $\C^k \times \C^l \to \C^k$. Now we require the linear $G$-action on the base $\C^k$ to lift to the total space
 as  vector bundle automorphisms. This means there is a map $\beta :  G \times \C^k \to \Gl_l (\C)$ such the action of $G$ on the total space 
 $\C^n = \C^k \times \C^l$ of the bundle, which is a map $G \times (\C^k \times \C^l) \to  (\C^k \times \C^l)$, will be  given by the formula
 \begin{equation}
  (g, (z, w)) \mapsto (\alpha(g) \cdot z, \beta (g, z) \cdot w)
  \end{equation}
  and the $\cdot$ is multiplication of a vector by a matrix. Clearly the map $\beta$ has to satisfy some obvious rules coming from the requirements of  a 
  group action. The only non-linearity in this sort of action is the dependence of the map $\beta$ on the ``base point" $z$. A concrete example used by
   G. Schwarz is the following:
   
   \begin{Exa} \label{counter} We will construct a non-linearizable action of  the group $G = \C^* \times \OO_2 (\C)$ on $\C^4$. For that we consider $\C^4$ as a trivial vector bundle 
   of rank $2$ over $\C^2$. First we describe the  action of $\OO_2 (\C) \cong  \SO_2 (\C) \rtimes  \Z/2\Z \cong \C^\star \rtimes  \Z/2\Z$ by vector bundle automorphisms of  the trivial vector bundle. For that let $\lambda$ denote the variable in $\SO_2 (\C) \cong \C^\star$ and $\sigma$ the generator of $\Z/2\Z$. Then the action of $\lambda$ on the trivial vector bundle  $\C^ 2 \times \C^2$ with coordinates $((x,y), (u,v))$  is given by

   $$ (\lambda, (u,v), (x,y)) \mapsto ((\lambda^2 u, \lambda^{-2} v), (\lambda^3 x, \lambda^{-3} y))$$
   and the action of $\sigma$ is given by 
   $$    ((u,v), (x,y)) \mapsto((u, v), ((1+uv+(uv)^2) y - v^3 x,    u^3 y +(1-uv) x      ))$$
   
   The action of $\SO_2 (\C)$ is linear however the element $\sigma$ makes this bundle non-trivial as an $\OO_2 (\C)$-bundle. The action of the additional factor $\C^\star$ in $G$ is
   just the multiplication in the fibre. This has the consequence, that a $\C^\star$-equivariant isomorphism is nothing than a bundle isomorphism. Since there is no linearization of the $\SO_2 (\C)$-action
   by bundle isomorphisms, we have a non-linearizable action of $G$. The reader is invited to linearize the bundle, and thus the action, holomorphically by explicit formulas involving the exponential function.
   \end{Exa}

 Already   Schwarz   mentioned that this particular action is {\bf holomorphically} linearizable. This is an instance of a more general result described in section \ref{Linearization}, based on the Equivariant Oka Principle  (Theorem \ref{thm:HK-classification}.). Let us just finish the discussion of the Algebraic Linearization Problem by saying that there are non-linearizable actions of all semi-simple groups, that no counterexamples for
abelian groups are known yet, and that actions of connected reductive groups on $\C^2$    and on $\C^3$ \cite{Kraft} are known to be linearizable. For $\C^2$ this follows classically from Theorem \ref{vdKu}. The most striking case was that of 
   $\C^*$-actions on $\C^3$, a result with very indirect proof \cite{KMKZKR}. First came  the classification of all contractible
   affine algebraic $3$-folds with $\C^*$-action. Then it  remained to prove that some of those varieties, which if they were isomorphic to $\C^3$ would carry a non-linearizable action, are in fact not isomorphic to $\C^3$. The most famous example is the so called Koras-Russell cubic threefold defined by the equation
   \begin{equation} \label{KR} M_{KR} := \{ (x,y,s,t) \in \C^4 \ : \ x + x^2y + s^2+ t^3 = 0 \} \end{equation}

It was Makar-Limanov who could distinguish this variety algebraically from $\C^3$ by inventing his famous Makar-Limanov invariant \cite{Makar}.

We will continue this discussion of the Linearization Problem in the last section, but then concentrating more on  the holomorphic setting.

\section{Notions for  the largeness of an automorphism group}

A natural way of saying  that $\Aut_{hol} (X) $ of a complex manifold is large is that it is infinite-dimensional. This is by definition the case 
 if the Lie algebra (and equivalently the vector space)  generated by $\R$-complete holomorphic vector fields on $X$ is infinite-dimensional. 
  This condition is equivalent to the impossibility
of introducing on $\Aut_{hol} (X) $ a topology with respect to which it becomes
a Lie transformation group (possibly, with uncountably many connected
components) in the sense of Palais (\cite{Palais},
  p.p. 99, 101, 103). In particular, in this case $\Aut_{hol} (X) $ is not a Lie group in the
compact-open topology and not even locally compact (see \cite{MZ}, p. 208).  
  
  The notions we will introduce in this section are much stronger (or more precise) and imply the infinite-dimensionality of the holomorphic automorphism group.
For example consider the manifold $X = \udisk \times \C$ which is the product of the unit disc $\udisk = \{ z \in \C \ : \ \vert z \vert < 1\} $ with the complex line. It has an infinite-dimensional group of holomorphic
automorphisms. Indeed  it contains the shear-like maps $(z, w) \mapsto (z, f (z) + w)$ for all holomorphic functions $f \in \cO (\udisk)$. On the other hand this manifold does not have the notions we will introduce in this chapter, it does not have the density property and  is not   holomorphically flexible. The same holds true for the direct product of any Kobayashi hyperbolic manifold with $\C$.

Another class of examples of manifolds with infinite-dimensional holomorphic automorphism group are homogeneous Stein manifolds of dimension $\ge 2$ as proved by Huckleberry and Isaev \cite{HI}.
In contrary to the example  $\udisk \times \C$, it seems possible that the Stein homogeneous spaces of dimension $\ge 2$, except possibly $(\C^\star) ^n$, all have the density property. If one assumes that the homogeneous space is affine algebraic (instead of assuming Stein) this is a recent result of Kaliman
and the author, see Example (1) in the list of examples with density property from subsection \ref{examples}.

\subsection{Density property}

Considering a question promoted by Walter Rudin, Anders\'en and Lempert in 1992 \cite{AL} proved a remarkable fact about the affine $n$-space $n\ge 2$, namely that the group generated by shears (maps of the form $$(z_1, \ldots, z_n) \mapsto (z_1, \ldots , z_{n-1}, z_n + f (z_1, \ldots , z_{n-1}))$$ where $f \in \cO (\C^{n-1})$ is a holomorphic function and any linear conjugate of such a map) and overshears (maps of the form $$(z_1, \ldots, z_n) \mapsto (z_1, \ldots , z_{n-1}, z_n  g (z_1, \ldots , z_{n-1}))$$ where $g \in \cO^\ast (\C^{n-1})$ is a nowhere vanishing holomorphic function (and any linear conjugate of such a map) are dense in holomorphic automorphism group of $\C^n$, endowed with compact-open topology.

The main importance of their work was not the mentioned   result but the proof itself which implies, as observed by Forstneri\v c and Rosay in \cite{FR} for $ X= \C^n$, the  remarkable  Anders\'en--Lempert theorem, see below. The natural generalization from $\C^n$ to arbitrary manifolds $X$ was made by Varolin \cite{V1} who introduced the  important
notion of the density property.

\subsubsection{Definition and main features} \label{defandmain}

Recall that a  holomorphic vector field $\Theta$ on a complex manifold $X$  is called complete or completely integrable if the ODE

$$\frac{d}{dt} \varphi (x,t) = \Theta (\varphi (x,t))$$

$$\varphi (x,0) = x$$

\noindent
has a solution $\varphi (x,t)$ defined for all complex times $t \in \C$ and all starting points $x \in X$. It gives  a complex one-parameter subgroup in the holomorphic
automorphism group $\Aut_{hol} (X)$ (by definition this is a holomorphic group homomorphism $\C \to \Aut_{hol} (X)$) whose elements $\varphi_t = \varphi (\cdot, t) : X \to X$ we call flow maps or time-$t$-maps of the field $\Theta$.

We will denote the set of completely integrable holomorphic vector fields by $\CVF_{hol} (X)$ and the smallest Lie algebra 
containing a subset $A$ of the Lie algebra of all  vector fields on $X$ will be denoted by $\Lie (A)$. We call it the Lie algebra generated by $A$.

A remark about completeness of vector fields is in order. The just defined notion is usually called $\C$-complete. When the above ODE has a solution for all real times $ t\in \R$ a field is called $\R$-complete and when it 
has a solution for positive real times, the field is called  $\R^{+}$-complete. On Kobayashi hyperbolic
manifolds, for example the unit ball in $\C^n$, no $\R$-complete field can be $\C$-complete, since  entire holomorphic maps to such manifolds are constant. Also on the unit ball, there exist $\R^{+}$-complete holomorphic fields which are not $\R$-complete,  e.g., the Euler vector field contracting the ball to zero.

In contrast to this situation one could define the density property  (next definition)
using $\R$-complete holomorphic vector fields and derive the fact that on Stein manifolds with the density property any $\R$-complete holomorphic vector field is automatically $\C$-complete. The main ingredient in the proof of this fact  is that
on Stein manifolds with density property bounded plurisubharmonic functions are constant (see property (1)  of Stein manifolds with DP at the end of this subsection). Thus by the theorem in \cite{AFR} even all  $\R^{+}$-complete holomorphic fields on such manifolds are automatically $\C$-complete.  This is the reason why we use the simplified notion "complete" or "completely integrable" holomorphic vector field. Following the reasoning along these lines  one can prove  that Theorem
\ref{complex} is holds true when $\C^n$ is replaced by any Stein manifold with density property.

\begin{Def}\label{1.10}
A complex manifold $X$ has the density property (for short DP) if in the
compact-open topology the Lie algebra  $\Lie (\CVF_{hol} (X))$ generated by
completely integrable holomorphic vector fields on $X$ is dense in
the Lie algebra $\VF_{hol} (X)$ of all holomorphic vector fields on
$X$.
 \end{Def}

The density property is a precise way of saying that the automorphism group of a manifold is big, in particular for a Stein manifold this is underlined by the main result of the theory (see \cite{FR} for $\C^n$, \cite{V1},  a detailed proof can be found in the Appendix of \cite{R} or in  \cite{For}).

\begin{Thm}[\bf Anders\'en-Lempert Theorem] \label{AL-Theorem} Let $X$ be a Stein manifold with the density  property
and let  $\Omega$ be an open subset of $X$. Suppose that  $ \Phi : [0,1] \times \Omega \to X$ is a $C^1$-smooth
map  such that

{\rm (1)} $\Phi_t : \Omega \to X$ is holomorphic and injective   for every  $ t\in [0,1]$,

{\rm (2)} $\Phi_0 : \Omega \to X$ is the natural embedding of $\Omega$ into $X$, and

{\rm (3)} $\Phi_t  (\Omega) $ is  a Runge  
subset\footnote{Recall that an open subset $U$ of $X$ is Runge if any holomorphic function on $U$ can
be approximated by global holomorphic functions on $X$ in the compact-open topology. Actually, for $X$ Stein by Cartan's Theorem A this definition
implies more: for any coherent sheaf on $X$ its section over $U$ can be approximated by global sections.}  of $X$ for every $t\in [0,1]$.

Then for each $\epsilon >0 $ and every compact subset $K \subset \Omega$ there is a continuous family,
$\alpha: [0, 1] \to \Aut_{hol} (X)$  of  holomorphic  automorphisms of $X$
such that  $$\alpha_0 = id \, \, \, {\rm and} \, \, \,\vert \alpha_t - \Phi_t \vert_K <\epsilon {\rm \ \  for\ \  every\ \ } t \in [0,1]$$ 
Moreover, given a collection $A$ of completely integrable  vector fields such that $\Lie(A)$ is dense in $\VF_{hol} (X)$, the automorphisms $\alpha_t$ can be chosen to be  finite compositions of flow maps of vector fields from the collection $A$.

\end{Thm}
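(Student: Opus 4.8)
The plan is to differentiate the isotopy $\Phi_t$ into a non-autonomous holomorphic vector field that generates it, to approximate that field piecewise in time by elements of $\Lie(A)$ using the Runge hypothesis together with the density property, to realise the resulting short-time flows as finite compositions of flow maps of fields from $A$ via Lie-product and commutator formulas, and to glue the pieces together while controlling the accumulated error by a shadowing argument on a fixed compact tube around the trajectory of $K$. In detail: since each $\Phi_t$ is a holomorphic injection it is a biholomorphism onto the open set $\Omega_t:=\Phi_t(\Omega)\subset X$; as $\Phi$ is $C^1$ in $t$ and each $\Phi_t$ is holomorphic in the space variable, the difference quotients $h^{-1}(\Phi_{t+h}-\Phi_t)$ are holomorphic and converge uniformly on compact sets, so $\partial_t\Phi_t$ is a holomorphic section of the pullback bundle $\Phi_t^{*}TX$ and therefore $V_t:=(\partial_t\Phi_t)\circ\Phi_t^{-1}$ is a holomorphic vector field on $\Omega_t$, continuous in $t$, with $\Phi_t$ being precisely the flow of the non-autonomous field $t\mapsto V_t$ issued from $\Phi_0$. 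Now choose compact sets $K\subset\mathring L\subset L\subset\Omega$; then $\bigcup_{t\in[0,1]}\Phi_t(L)$ is compact in $X$ (being the continuous image of $[0,1]\times L$), so fix a compact neighbourhood $M$ of it carrying a fixed Riemannian metric, let $L_0$ bound the relevant local Lipschitz constants on $M$, and, using compactness of $[0,1]$ and continuity of $\Phi$, fix $\rho>0$ so that the $\rho$-neighbourhood of $\Phi_t(K)$ lies in $\Phi_t(\mathring L)$ for every $t$. Finally fix a partition $0=t_0<\dots<t_N=1$ of small mesh $\delta$.

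On each interval $[t_{j-1},t_j]$ freeze $t\mapsto V_t$ at $V_{t_{j-1}}$: by Gronwall the two flows differ on $M$ by $O(\delta\,\omega(\delta))$ per step, where $\omega$ is the modulus of continuity of $t\mapsto V_t$, which summed over the $N\sim\delta^{-1}$ steps (with global amplification $e^{L_0}$) tends to $0$ as $\delta\to0$. Since $\Omega_{t_{j-1}}$ is Runge in the Stein manifold $X$, the strong Runge property for the coherent tangent sheaf (the footnote) approximates $V_{t_{j-1}}$, uniformly on the compact set $\Phi_{t_{j-1}}(L)\subset\Omega_{t_{j-1}}$, by a global field in $\VF_{hol}(X)$, and the density property then approximates that global field by $W_j\in\Lie(A)$, a finite linear combination of iterated brackets of members of $A$. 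The time-$\delta$ flow of $W_j$, which exists near $\Phi_{t_{j-1}}(K)$ once $\delta$ is small, is in turn approximated there, uniformly, by a finite composition $\beta_j$ of flow maps of members of $A$: linear combinations are treated by the Lie-product formula and brackets by the commutator formula,
\begin{align*}
\exp\big(s(X+Y)\big)&=\lim_{n\to\infty}\big(\exp(\tfrac{s}{n}X)\exp(\tfrac{s}{n}Y)\big)^{n},\\
\exp\big(s[X,Y]\big)&=\lim_{n\to\infty}\Big(\exp(\sqrt{\tfrac{s}{n}}\,X)\,\exp(\sqrt{\tfrac{s}{n}}\,Y)\,\exp(-\sqrt{\tfrac{s}{n}}\,X)\,\exp(-\sqrt{\tfrac{s}{n}}\,Y)\Big)^{n},
\end{align*}
applied iteratively; each factor is a genuine automorphism of $X$ because the fields in $A$ are complete.

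Running $W_j$ only up to time $t-t_{j-1}$ gives a continuous family $\beta_{j,t}$ with $\beta_{j,0}=\id$, and we set $\alpha_t:=\beta_{j,\,t-t_{j-1}}\circ\beta_{j-1}\circ\cdots\circ\beta_1$ for $t\in[t_{j-1},t_j]$; this is a continuous path in $\Aut_{hol}(X)$ with $\alpha_0=\id$ consisting of finite compositions of flow maps from $A$. The error estimate is inductive: if $\alpha_{t_{j-1}}(K)$ lies within $\rho/2$ of $\Phi_{t_{j-1}}(K)$ and $\delta$ is small enough that the flow of $W_j$ displaces points by less than $\rho/4$, then the trajectory remains inside the $\rho$-neighbourhood of $\Phi_{t_{j-1}}(K)$, hence inside $\Phi_{t_{j-1}}(\mathring L)$, where $W_j\approx V_{t_{j-1}}$, so $\beta_j$ shadows the true flow of $V_{t_{j-1}}$ there. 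Choosing the approximation errors at stage $j$ below $\epsilon\,e^{-L_0}/(8N)$, all contributions accumulate (amplified by at most $e^{L_0}$) to less than $\epsilon$, yielding $\lvert\alpha_t-\Phi_t\rvert_K<\epsilon$ for every $t\in[0,1]$.

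The genuinely delicate step is the shadowing: $V_{t_{j-1}}$ and its global surrogate $W_j$ agree only on a compact part of $\Omega_{t_{j-1}}$, so one must prevent the approximate trajectories from escaping that set before the time-step ends — which is exactly why the room $\rho$ has to be fixed before the partition is refined, and where the Runge hypothesis together with the Stein, density-property setting is indispensable. Everything else — verifying that per-step errors, once amplified by the Gronwall factor over the remaining steps, still sum below $\epsilon$, together with the bookkeeping for the Lie-product and commutator approximations — is routine once the density property and the coherent-sheaf Runge approximation are granted.
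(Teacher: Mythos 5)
The paper itself does not prove Theorem \ref{AL-Theorem} but defers to \cite{FR}, \cite{V1}, the appendix of \cite{R} and \cite{For}; your argument is a correct outline of precisely the proof given in those references: differentiate the isotopy into a time-dependent holomorphic field, freeze it on a fine partition, extend each frozen field by Runge approximation of sections of the coherent tangent sheaf, approximate the extension by an element of $\Lie(A)$ using the density property, realise sums and brackets of complete fields by the Euler and commutator product formulas, and glue with the standard Gronwall/shadowing bookkeeping on a fixed compact tube. Nothing essential is missing.
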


Philosophically one can think of the density property as a tool for realizing  local movements  by  global  maps   (automorphisms).    In some sense it is a substitute  for cutoff functions which in the differentiable category are used for globalizing local movements. In the holomorphic category we of course lose control on automorphisms
outside the compact set $K$. This makes constructions more complicated. Nevertheless constructing  sequences of automorphisms  by iterated use of the Anders\'en-Lempert theorem
has led to remarkable applications.

Let us further remark that the implications of the density property for manifolds which are not Stein have not been explored very much yet. If the manifold is compact all (holomorphic) vector fields are completely integrable, the
density property trivially holds and thus cannot give any further information on the manifold.

\begin{Rem} Anders\'en and Lempert proved that every algebraic vector field on $\C^n$ is a finite sum of  algebraic shear fields (fields of form $ p(z_1, \ldots z_{n-1}) \frac \partial {\partial z_n}$ for a polynomial $p \in \C[\C^{n-1}]$ and their linear conjugates, i.e.,  fields whose one-parameter subgroups consist of shears, see equation \eqref{shear})  and overshear fields 
(fields of form $ p(z_1, \ldots z_{n-1}) z_n  \frac \partial {\partial z_n}$ for a polynomial $p \in \C[\C^{n-1}]$ and their linear conjugates, i.e.,  fields whose one-parameter subgroups consist of overshears, see equation \eqref{overshear}). 
Here an algebraic vector field is an algebraic section of the tangent bundle, for example on  $\C^n$ it can be written as $\sum_{i=1}^n p_i (z_1, \ldots , z_n) \frac{\partial}{\partial z_i}$ with polynomials $p_i \in \C[\C^n]$.

Since algebraic vector fields are dense in c.-o.-topology  in the holomorphic vector fields, their result shows that 
$\C^n$ has the density property (they do not even need Lie brackets).
Together with the simple fact that any holomorphic automorphism of $\C^n$ can be joined to the identity by a smooth path, this shows how the Anders\'en-Lempert theorem implies that the group $\Osh(\C^n)$ is dense in the holomorphic automorphism group of $\C^n,$ (Theorem \ref{AL} above).
\end{Rem}

\begin{Def}
An affine algebraic manifold $X$ has the algebraic density
property (for short ADP) if the Lie algebra $\Lie (\CVF_{alg}) (X)$ generated by completely
integrable algebraic vector fields on it coincides with  the Lie
algebra $\VF_{alg} (X)$ of all algebraic vector fields on it.
\end{Def}

The algebraic density property for affine algebraic manifolds can be viewed as a tool to prove the density property, whereas the ways of proving it are purely algebraic work.
Since on an affine algebraic manifold the algebraic vector fields are dense in c.o.-topology in the holomorphic vector fields, the algebraic density property  implies the density property: ADP $\Longrightarrow $ DP.

 If an algebraic vector field is completely integrable, 
its flow gives a one parameter subgroup in the holomorphic automorphism group not necessarily in the algebraic automorphism group. For example,  a polynomial overshear field of the form $p (z_1, \dots , z_{n-1}) z_n  \frac{\partial}{\partial z_n}$
has the flow map $\gamma (t, z) = (z_1, \ldots , z_{n-1}, \exp (t p (z_1, \dots , z_{n-1}) ) z_n)$. This is the reason that the study of the  algebraic density property is in the intersection of affine algebraic geometry and complex analysis. It is an algebraic notion, proven using algebraic methods, but has implications for the holomorphic automorphism group.

\subsubsection{Criteria and Examples} \label{examples}

The first example of a manifold with the density property was $\C^n$ as explained above. Then Varolin himself found many examples \cite{Varolin2} and together with Toth he proved that semi-simple complex Lie groups and certain homogeneous spaces of them
have DP. Their proofs rely on results from representation theory and are therefore not applicable in a more general context. Later Kaliman and the  author found very effective criteria to prove DP, see e.g. \cite{KKinvent}, \cite{KK2017}. We will explain one of them now. The idea has two steps. 

\smallskip
STEP1: This crucial step  is a way of finding a submodule in $\VF_{hol} (X)$ contained in the closure of $\Lie(\CVF_{hol}(X))$. That is highly non-trivial, the main idea is the following:

\begin{Def}  A compatible pair is a pair $(\nu, \mu)$ of complete holomorphic vector fields such that the closure of the linear span of the product of the kernels $\ker \nu  \ker \mu$ contains a non-trivial ideal $I \subset \cO (X )$  and there is a function $h \in \cO (X)$ with $\nu (h) \in \ker \nu\setminus 0$ and $h\in \ker \mu$.  	The biggest ideal $I$ with this property will be called the ideal of the pair $(\nu, \mu)$.
\end{Def}

\begin{Lem} \label{compatible} 
Let  $(\nu, \mu)$ be a compatible pair, and let $I$ be its ideal and $h$ its function. Then the submodule of $\VF_{hol} (X)$ given by $I · \nu (h) · \mu$ is contained in the closure of $\Lie( \CVF_{hol} (X))$.
\end{Lem}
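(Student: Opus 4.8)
Proof proposal.

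The plan is to reduce the statement to an explicit identity expressing $f\,g\,\nu(h)\,\mu$, for $f\in\ker\nu$ and $g\in\ker\mu$, as a difference of two Lie brackets of complete holomorphic vector fields. Products $fg$ with $f\in\ker\nu$ and $g\in\ker\mu$ linearly span a subset of $\cO(X)$ whose closure contains the ideal $I$; the map $\phi\mapsto\phi\,\nu(h)\,\mu$ is a continuous linear map $\cO(X)\to\VF_{hol}(X)$, and $\overline{\Lie(\CVF_{hol}(X))}$ is a closed linear subspace of $\VF_{hol}(X)$. Hence, once $f\,g\,\nu(h)\,\mu\in\Lie(\CVF_{hol}(X))$ is established for all such $f,g$, passing to finite linear combinations and then to limits yields $I\cdot\nu(h)\cdot\mu\subseteq\overline{\Lie(\CVF_{hol}(X))}$. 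The closure is used only for this last step; each individual product already lands in $\Lie(\CVF_{hol}(X))$.

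First I would record the completeness input. Besides the classical fact that $\psi\,\theta$ is complete whenever $\theta$ is complete and $\psi\in\ker\theta$, I shall use the slightly more general statement: if $\theta$ is complete and $\psi\in\cO(X)$ satisfies $\theta^{2}(\psi)=0$ (equivalently $\theta(\psi)\in\ker\theta$), then $\psi\,\theta$ is complete. One proves this by looking along the orbits of $\theta$, on which $\psi$ is an affine function of the time parameter: writing the prospective flow of $\psi\,\theta$ as a reparametrisation $x\mapsto\varphi_{s(t,x)}(x)$ of the flow $\varphi$ of $\theta$ leads to the scalar \emph{linear} ODE $\partial_{t}s=\psi(x)+s\,\theta(\psi)(x)$, whose solution $s(t,x)$ exists for all $t\in\C$ and is holomorphic in $x$. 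Applying this to our data: $f\nu$ and $g\mu$ are complete since $f\in\ker\nu$ and $g\in\ker\mu$; the field $g\,h\,\mu$ is complete because $g,h\in\ker\mu$ forces $gh\in\ker\mu$; and — the one place where the extra generality is really needed — $f\,h\,\nu$ is complete because $\nu(fh)=f\,\nu(h)+h\,\nu(f)=f\,\nu(h)\in\ker\nu$, whence $\nu^{2}(fh)=0$.

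Next I would carry out the bracket computation. Using the Leibniz rule $[aV,bW]=a\,V(b)\,W-b\,W(a)\,V+ab\,[V,W]$ together with $\nu(f)=0$, $\mu(h)=0$ and $\nu(h)\in\ker\nu$, one obtains
\begin{gather*}
[\,f\nu,\ g\,h\,\mu\,]=f\,g\,\nu(h)\,\mu+f\,h\,\nu(g)\,\mu-g\,h\,\mu(f)\,\nu+f\,g\,h\,[\nu,\mu],\\
[\,f\,h\,\nu,\ g\,\mu\,]=f\,h\,\nu(g)\,\mu-g\,h\,\mu(f)\,\nu+f\,g\,h\,[\nu,\mu].
\end{gather*}
Subtracting, all terms involving $[\nu,\mu]$, $\mu(f)$ or $\nu(g)$ cancel, leaving the clean identity
\[
f\,g\,\nu(h)\,\mu=[\,f\nu,\ g\,h\,\mu\,]-[\,f\,h\,\nu,\ g\,\mu\,].
\]
By the previous paragraph the right-hand side is a difference of Lie brackets of complete vector fields, hence lies in $\Lie(\CVF_{hol}(X))$, and the reduction described above completes the proof.

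The hard part is really what sits in the second paragraph. The coefficient $fh$ of the auxiliary field $f\,h\,\nu$ need not belong to $\ker\nu$, so its completeness is not automatic; it is precisely the compatibility hypothesis ($\nu(h)\in\ker\nu$ and $h\in\ker\mu$) that makes $f\,h\,\nu$ available. The intertwined difficulty is to guess the right pair of brackets: a single bracket such as $[\nu,g\,h\,\mu]$, or an infinitesimal conjugation by the flow of one complete field, merely reproduces vector fields already in $\Lie(\CVF_{hol}(X))$ and yields no new submodule, whereas the specific combination $[f\nu,g\,h\,\mu]-[f\,h\,\nu,g\,\mu]$ is tuned so that every uncontrolled error term — those built from $[\nu,\mu]$, $\mu(f)$, $\nu(g)$ — cancels in pairs. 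Once that combination has been spotted, everything else is routine bookkeeping.
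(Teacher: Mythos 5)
Your proof is correct and follows essentially the same route as the paper: the identity $[f\nu,gh\mu]-[fh\nu,g\mu]=fg\,\nu(h)\,\mu$ for $f\in\ker\nu$, $g\in\ker\mu$, followed by linear combinations and passage to the closure. You additionally justify the completeness of $fh\nu$ via $\nu^{2}(fh)=0$, a point the paper's proof asserts without comment, so your write-up is if anything more complete.
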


\begin{proof}
 Let $f \in  \ker \nu $ and $g \in  \ker\mu$, then $f \nu, fh\nu, g\mu, gh\mu \in  \CVF_{hol} (X)$. A standard calculation shows
$$ [f\nu ,gh\mu ] - [fh\nu ,g\mu ] = fg\nu(h)\mu  \in  \Lie(\CVF_{hol}(X)).$$
Thus an arbitrary element $\sum_{i=1}^N (f_i g_i)\nu(h)μ \mu \in  span (\ker\nu·\ker\mu)·\nu(h)·\mu$ with $f_i \in  \ker \nu$ 
and $g_i \in \ker \mu$ is contained in $\Lie(\CVF_{hol}(X))$ which concludes the proof. 
 \end{proof} 

STEP 2: Once some submodules are found, simple transitivity creates more of them as follows:

\begin{Def} Let $p \in  X$. A set $U \subset  T_p X$ is called generating set for $T_p X$ if the orbit of $U$ under the induced action of the stabilizer $\Aut_{hol}(X)_p$ contains a basis of $T_p X$.
\end{Def}

\begin{Prop} \label{moving} Let $X$ be a Stein manifold such that $Aut_{hol} (X)$ acts transitively on $X$. Assume that there are holomorphic vector fields $v_1,...,v_n \in  \VF_{hol} (X)$ which generate a submodule $\cO (X) v_i$ that is contained in the closure of $\Lie(\CVF_{hol} (X))$ and assume that there is a point $p \in  X$ such that the tangent vectors $v_i (p) \in  T_p X$ are a generating set for the tangent space $T_p X$. Then X has  DP.
\end{Prop}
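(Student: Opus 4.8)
The plan is to show that the compact-open closure $N:=\overline{\Lie(\CVF_{hol}(X))}$ is all of $\VF_{hol}(X)$, which is precisely the density property. The first observation is that $N$ is invariant under pushforward by holomorphic automorphisms. Indeed, if $\Theta\in\CVF_{hol}(X)$ has flow $\varphi_t$ ($t\in\C$) and $\phi\in\Aut_{hol}(X)$, then $\phi_*\Theta$ has flow $\phi\circ\varphi_t\circ\phi^{-1}$, again defined for all $t\in\C$, so $\phi_*\CVF_{hol}(X)=\CVF_{hol}(X)$; since $\phi_*$ is a Lie algebra automorphism of $\VF_{hol}(X)$ and a linear homeomorphism for the compact-open topology, it maps $\Lie(\CVF_{hol}(X))$ onto itself and hence $\phi_*N=N$. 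Because $N$ is a closed linear subspace and $\phi_*(\cO(X)v)=\cO(X)\,\phi_*v$, the hypothesis $\bigoplus_i\cO(X)v_i\subset N$ upgrades to $\cO(X)\,(\phi_*v_i)\subset N$ for every $\phi\in\Aut_{hol}(X)$ and every $i$.

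Next I would use transitivity and the generating-set hypothesis to place a frame from $N$ near every point of $X$. Since $\{v_i(p)\}$ generates $T_pX$, there are $\phi_1,\dots,\phi_n\in\Aut_{hol}(X)_p$ and indices $i_1,\dots,i_n$ with $D(\phi_j)_p\bigl(v_{i_j}(p)\bigr)$ a basis of $T_pX$; as $\phi_j(p)=p$ these equal $((\phi_j)_*v_{i_j})(p)$, so the fields $w_j:=(\phi_j)_*v_{i_j}$ satisfy $\cO(X)w_j\subset N$ while $w_1(p),\dots,w_n(p)$ is a basis of $T_pX$. By transitivity, for each $q\in X$ choose $\psi_q\in\Aut_{hol}(X)$ with $\psi_q(p)=q$ and set $w^q_j:=(\psi_q)_*w_j$; then $\cO(X)w^q_j\subset N$ and $w^q_j(q)=D(\psi_q)_p\bigl(w_j(p)\bigr)$, so $w^q_1(q),\dots,w^q_n(q)$ is a basis of $T_qX$. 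The $\cO(X)$-submodule $M:=\sum_{q\in X}\sum_{j=1}^n\cO(X)w^q_j$ then satisfies $M\subset N$ (finite sums of elements of $N$), and by Nakayama's lemma the germs of $w^q_1,\dots,w^q_n$ at $q$ generate the stalk $\mathcal{T}_{X,q}$ of the tangent sheaf, so $M$ generates $\mathcal{T}_X$ at every point.

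Finally I would invoke the standard Oka--Cartan fact that, on a Stein manifold $X$, an $\cO(X)$-submodule $M\subset\VF_{hol}(X)=H^0(X,\mathcal{T}_X)$ whose germs generate $\mathcal{T}_X$ everywhere is dense in $\VF_{hol}(X)$ for the compact-open topology; applied to the module just built, this gives $N\supset\overline{M}=\VF_{hol}(X)$, i.e.\ $X$ has DP. To prove that fact, given $v\in\VF_{hol}(X)$, a compact $K$ and $\epsilon>0$, one takes a relatively compact Runge Stein open $U\supset K$; coherence of $\mathcal{T}_X$ and compactness of $\overline U$ let finitely many $w^{(1)},\dots,w^{(m)}\in M$ generate $\mathcal{T}_X$ over a neighbourhood of $\overline U$; Cartan's Theorem B makes the induced map $\cO(U)^m\to H^0(U,\mathcal{T}_X)$ surjective, so $v|_U=\sum_k g_k\,w^{(k)}|_U$ with $g_k\in\cO(U)$; and Runge approximation of the $g_k$ on $K$ by $\tilde g_k\in\cO(X)$ yields $\sum_k\tilde g_k\,w^{(k)}\in M$ within $\epsilon$ of $v$ on $K$. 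I expect the only genuinely delicate point to be this last passage from ``generating all stalks'' to ``dense among global sections''; the rest --- invariance of $N$ under automorphisms, and the frame-moving argument via transitivity and the generating-set hypothesis --- is formal.
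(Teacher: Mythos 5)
Your proposal is correct and follows essentially the same strategy as the paper's proof: invariance of the closure of $\Lie(\CVF_{hol}(X))$ under automorphisms, transporting a spanning frame from $p$ to every point by transitivity and the stabilizer action, Nakayama's lemma, and coherent-sheaf approximation (Theorem B plus Runge) on Stein manifolds. The only divergence is in how finiteness is achieved: the paper builds a single finite collection of fields spanning the tangent space everywhere on a neighborhood of $K$ by an induction on the dimension of the irreducible components of the degeneracy locus, whereas you allow one transported frame per point and extract a finite generating subfamily afterwards from coherence and compactness of $\overline{U}$ --- an equally valid and somewhat cleaner shortcut.
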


\begin{proof} 
The basic observation is that the change of variables does not effect  the complete integrability of a vector field and commutes with the operations of forming the  Lie bracket and the sum of vector fields. Thus for any holomorphic automorphism $\alpha \in \Aut_{hol} (X)$ we have  $\alpha^* (\CVF_{hol} (X)) = 	\CVF_{hol} (X)$ and $\alpha^* (\Lie (\CVF_{hol} (X))) = 	\Lie (\CVF_{hol} (X))$ (and the same for the closure). Also if $L$ is a submodule of $\VF_{hol} (X)$ (as $\cO (X)$ module), then $\alpha^* (L)$ is again a submodule.

 We may assume that the vectors $v_i (p)$ contain a basis of $T_p X$. Indeed, the vectors $v_i (p)$ are a generating set of $T_p X$. Thus after adding some pullbacks of some vector fields $v_i$ by automorphisms in the stabilizer $\Aut_{hol} (X)_p$ we get the desired basis of $T_p X$. Let $A\subset X$ be the analytic subset of points where the vectors $v_i (a)$ do not span the whole tangent space $T_a X$.
Let $K \subset  X$ be a compact set. After replacing $K$ by its $\cO (X)$ -convex hull we may assume that $K$ is $\cO (X)$-convex. Let $Y$ be a neighborhood of $K$ which is Stein and Runge, and moreover such that the closure of $Y$ is compact. After adding finitely many pullbacks of the $v_i$ by automorphisms to the collection of  vector fields  $v_1, . . . , v_n$ we get that $Y \cap
A = \emptyset $. Indeed, since the closure of $Y$ is compact, $Y \cap A$ is a finite union of irreducible
analytic subsets. Let $A_0 \subset  A$ be an irreducible component of maximal dimension.
Pick any $a \in  A_0$ and $\varphi \in   \Aut_{hol} (X)$ such that $\varphi (a) \in  Y \setminus A$. Since the vectors
$v_i (\varphi (a))$ span the tangent space $T_{\varphi (a)} X$ the vectors $(\varphi^∗ ν_i )(a)$ span the tangent space 
$T_a X$. 
 Thus after adding some of the pull backs to $ v_1, . . . , v_n$ the component $A_0 \cap Y$ is replaced by finitely many components of lower dimension. Repeating the same procedure, inductively we get after finitely steps a list of complete vector fields $v_1,...,v_N$ such that $A\cap Y =\emptyset$.
 By the Nakayama lemma they generate the stalk of the tangent sheaf at each point in $Y$. By standard results  in the theory of coherent $\cO$-module sheaves  on Stein manifolds, the sum of the corresponding modules $\cO (X) v_i$ approximates every holomorphic vector field on the compact $K$.
Thus by the observation in the beginning of the proof every holomorphic vector field is in the closure of $\Lie(\CVF_{hol}(X))$.  
\end{proof}

Summarizing we obtain our criterion

\begin{Thm}
	 Let $X$ be a Stein manifold such that the holomorphic automorphisms $\Aut_{hol}(X)$ act transitively on $X$. If there are compatible pairs $(\nu_i, \mu_i)$ such that there is a point $p \in  X$ where the vectors $\mu_i (p)$ form a generating set of $T_p X$, then $X$ has the density property.
\end{Thm}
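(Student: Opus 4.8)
The plan is to assemble the statement from its two prepared halves, Lemma \ref{compatible} and Proposition \ref{moving}, with one bridging manoeuvre in between. First I would apply Lemma \ref{compatible} to each of the (finitely many) compatible pairs $(\nu_i,\mu_i)$: writing $I_i$ for its ideal and $h_i$ for its function, so that $\nu_i(h_i)\in\ker\nu_i\setminus\{0\}$, the lemma places the submodule $I_i\cdot\nu_i(h_i)\cdot\mu_i$ inside the closure of $\Lie(\CVF_{hol}(X))$. These are not yet of the shape demanded by Proposition \ref{moving}, which asks for free rank-one submodules $\cO(X)v_i$. The bridge is this: since $\nu_i(h_i)$ is a holomorphic function that is not identically zero and $I_i$ is a nonzero ideal, the product $J_i:=\nu_i(h_i)\cdot I_i$ is a nonzero ideal of $\cO(X)$, and because $X$ is Stein I may choose $g_i\in J_i$ with $g_i\not\equiv 0$; set $w_i:=g_i\mu_i$. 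For every $f\in\cO(X)$ one has $f\,w_i=(fg_i)\mu_i$ with $fg_i\in J_i$, so in fact the whole free submodule $\cO(X)\,w_i$ is contained in $\overline{\Lie(\CVF_{hol}(X))}$. This converts the ``thin'' submodules coming from the compatible pairs into exactly the free submodules needed downstream.

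Next I would run Proposition \ref{moving} with the fields $w_1,\dots,w_N$ and the point $p$. Transitivity of $\Aut_{hol}(X)$ is assumed, and $\cO(X)w_i\subset\overline{\Lie(\CVF_{hol}(X))}$ has just been checked; it remains to see that $\{w_i(p)\}$ is a generating set of $T_pX$. Since $w_i(p)=g_i(p)\,\mu_i(p)$, and rescaling a vector by a nonzero scalar changes neither the span of its $\Aut_{hol}(X)_p$-orbit nor the property that a finite set of vectors be a generating set, the hypothesis that $\{\mu_i(p)\}$ is a generating set yields the same for $\{w_i(p)\}$ --- provided $g_i(p)\neq 0$ for every $i$. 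Granting that, Proposition \ref{moving} delivers the density property of $X$, which is the conclusion. Concretely this is the tail of that proposition: replace a given compact $K$ by its $\cO(X)$-hull, take a relatively compact Stein Runge neighbourhood, adjoin finitely many $\Aut_{hol}(X)$-pullbacks of the $w_i$ until their values span the tangent space there, invoke Nakayama and Cartan's theorems to identify the coherent subsheaf of the tangent sheaf they generate with the whole tangent sheaf near $K$, and approximate an arbitrary holomorphic vector field on $K$ by global sections of that subsheaf, all of which lie in $\overline{\Lie(\CVF_{hol}(X))}$.

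The main obstacle is precisely the proviso ``$g_i(p)\neq 0$ for all $i$'', i.e. that $p$ avoid $V(J_i)=\{\nu_i(h_i)=0\}\cup V(I_i)$ for each $i$: nothing in the hypotheses forces the point furnished by the generating-set condition to lie off the zero loci of the functions $\nu_i(h_i)$ and of the ideals $I_i$ of the pairs. When $p$ does lie outside all $V(J_i)$ the argument above is complete as written. In general I would instead work with the set $A\subset X$ of points at which the values of all $\Aut_{hol}(X)$-translates of the submodules $\cO(X)w_i$ fail to span the tangent space; $A$ is a closed, $\Aut_{hol}(X)$-invariant (indeed analytic) subset, hence by transitivity it is either empty or all of $X$, and once one knows $A\neq X$ the spanning needed in Proposition \ref{moving} is available in a neighbourhood of every compact set and the proof goes through. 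So the heart of the matter is to rule out $A=X$, that is, to transport the generating-set condition at $p$ past the defect loci $V(J_i)$; this is immediate when $p\notin\bigcup_i V(J_i)$, and it is the reason the criterion is in practice applied with the compatible pairs arranged so that $p$ is a common non-vanishing point.
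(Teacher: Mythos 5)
Your route is the paper's route: feed Lemma \ref{compatible} into Proposition \ref{moving}, after picking a single non-trivial $f_i\in I_i\cdot\nu_i(h_i)$ for each pair so that $\cO(X)\,f_i\mu_i\subset I_i\cdot\nu_i(h_i)\cdot\mu_i\subset\overline{\Lie(\CVF_{hol}(X))}$. Up to and including that bridging step your argument coincides with the one in the text. The problem is that you stop exactly where the remaining work lies: you correctly observe that the point $p$ furnished by the hypothesis may sit on $\bigcup_i\{f_i=0\}$, you reduce the general case to showing that your invariant set $A$ is not all of $X$, and you then declare this ``immediate when $p\notin\bigcup_i V(J_i)$'' and otherwise a matter of how the criterion is applied ``in practice''. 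The theorem carries no such extra hypothesis, so as written this is a genuine gap: you never exhibit a single point at which translates of the fields $f_i\mu_i$ span the tangent space, which is precisely what ruling out $A=X$ requires.

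The missing observation --- the one sentence the paper spends on this --- is that the set of points $q$ where the $\mu_i(q)$ form a generating set is \emph{open} and non-empty, while each $f_i$ vanishes only on a proper analytic, hence nowhere dense, subset; so one simply replaces $p$ by a nearby point where both conditions hold, and there the $f_i(p)\mu_i(p)$ are again a generating set (non-zero rescaling being harmless, as you note). Concretely: the generating-set hypothesis at $p$ produces finitely many $\alpha_j\in\Aut_{hol}(X)_p$ such that the pullbacks $\alpha_j^*\mu_{i_j}$ evaluated at $p$ form a basis of $T_pX$; being holomorphic, these fields span $T_qX$ for all $q$ in an open neighbourhood $U$ of $p$; the finitely many functions $f_{i_j}\circ\alpha_j$ are not identically zero, so some $q\in U$ avoids all their zero sets, and at such a $q$ the fields $\alpha_j^*(f_{i_j}\mu_{i_j})$ --- each still generating a submodule contained in $\overline{\Lie(\CVF_{hol}(X))}$, since pullback by an automorphism preserves complete fields, Lie brackets, closures and submodules --- span $T_qX$. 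That single point shows $A\neq X$ and lets Proposition \ref{moving} finish the proof; with this paragraph inserted, your argument closes.
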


\begin{proof}
	Let $I_i$ be the ideals and $h_i$ the functions of the pairs $(\nu_i, \mu_i)$ and pick any non-trivial $f_i \in  I_i · \mu_i (h_i)$ for every $i$. Since the set of points $q \in  X$ where the vector fields $\mu_i (q)$ are a generating set is open and non-empty, there is some $p \in  X$ where the vector fields $f_i (p) \mu_i (p)$ are a generating set for $T_p X$. By Lemma \ref{compatible} the module generated by the vector fields $f_i \mu_i$ is contained in the closure of $\Lie(\CVF_{hol}(X))$ and thus by Proposition 
\ref{moving} the manifold $X$ has the density property.
\end{proof}

Before we  come to  the  complete  list of examples of Stein manifolds known to have the density property, we would like to 
show the power of our criterion in some examples:
\begin{Exa} 
On $\C^n$, $n\ge 2$ the pair of vector fields $({\partial \over \partial z_1} , {\partial \over \partial z_2})$ is compatible, with
function $z_1$ and $I = \cO (\C^n)$. Since we can permute coordinates, $\{ {\partial \over \partial z_2}\}$ is a generating set for each tangent space. Thus $\C^n$ has DP.
\end{Exa}
\begin{Exa}
On $X= \Sl_2 (\C )$ denote an element of $X$ by $$A =  \left(\begin{array}{ccc}
a_1 & a_2\\
b_1 & b_2 \end{array}\right) $$
The pair $(\delta_1, \delta_2)$ is compatible, where 

$$
\delta_1=a_1\frac{\partial}{\partial
b_1}+a_2\frac{\partial}{\partial b_2}
$$
$$
 \delta_2=b_1\frac{\partial}{\partial
 a_1}+b_2\frac{\partial}{\partial a_2} \, .
$$
with $I =\cO (X)$ and function $h = a_1$. These fields are corresponding to adding multiples of the first row to the second row and vice versa.
To see that $\delta_2$ is a generating set at the identity one can for example use the fact that the adjoint representation is irreducible. 
Thus $\Sl_2 (\C)$ has DP. The proof for $\Sl_n (\C)$ and $\Gl_n (\C)$,  $n\ge 3$ goes the same way.
\end{Exa}

\smallskip\noindent
\textbf{List of examples of Stein manifolds known to have the density property:}

\medskip\noindent
(1)  A homogeneous space $X=G/H$ where $G$ is a linear algebraic group and $H$ is a closed algebraic  subgroup  such that $X$ is affine
and whose connected component is different from $\C$ and from $(\C^*)^n, n\ge 1$
has DP. 

\medskip\noindent
 It is known that if  $H$ is reductive  the space $X=G/H$ is always affine, however there is no known group-theoretic characterization  which would 
say when $X$ is affine. This result has a long history, it  includes all examples known from the work of Anders\'en-Lempert and Varolin and Varolin-Toth and  Kaliman-Kutzschebauch, Donzelli-Dvorsky-Kaliman, the final result is proven by Kaliman and the author in  \cite{KK2017}. $\C$ and $\C^*$ do not have DP, however the following problem is well known and seems notoriously difficult

\medskip\noindent
\textbf{Open Problem:} Does $(\C^*)^n, n\ge 2$ have DP? 

\smallskip\noindent 
It is conjectured that the answer is no, more precisely one expects that
all holomorphic automorphisms of $(\C^*)^n, n\ge 2$ respect the form $\wedge_{i=1}^n { {\rm d} z_i \over  z_i}$ up to sign.

\medskip\noindent
(2) The manifolds  $X$ given as a submanifold in $\C^{n+2}$ with coordinates $u\in \C$, $v\in \C$, $z\in \C^n$  by the equation $uv = p(z)$, where the zero fiber of the polynomial $p \in \C[\C^n]$ is smooth (otherwise $X$ is not smooth)
 have ADP and thus DP
\cite{KaKu2}.

\medskip\noindent
(3) The only known non-algebraic examples with DP are firstly the holomorphic analogues of (2), namely the manifolds  $X$ given as a submanifold in $\C^{n+2}$ with coordinates $u\in \C$, $v\in \C$, $z\in \C^n$  by the equation $uv = f(z)$, where the zero fiber of the holomorphic function  $f \in \cO (\C^{n})$ is smooth (again otherwise $X$ is not smooth) \cite{KaKu2}. Secondly a special case of (4), namely when the Gizatullin surface can be completed by four rational curves, the Stein manifolds given by holomorphic  analogues of the concrete algebraic equations have the density property \cite{AKP}.

\medskip\noindent
(4) Smooth Gizatullin surfaces which admit a $\C$-fibration  with at most one singular  and reduced fibre. Sometimes these surfaces are called generalized Danielewski surfaces
\cite{Andrist}.

\medskip\noindent Recall that Gizatullin surfaces are by definition the normal affine surfaces on which the algebraic automorphism groups acts with an open orbit whose complement is a finite set of points. By the classical result of Gizatullin they can be 
characterized by admitting a completion with a simple normal crossing chain of rational curves at infinity. Every Gizatullin surface admits a $\C$-fibration  with at most one singular fibre which however
is not always reduced.

\medskip\noindent
(5) Certain hypersurfaces in $\C^{n+3}$ with coordinates $z=(z_0, z_1, \ldots , z_n) \in \C^{n+1}, x\in \C, y \in \C$ given by 
the equation $x^2y= a(z) + x b(z)$ where $\deg_{z_0} a \le 2$, $\deg_{z_0} b \le 1$ and not both degrees are zero,  including the Koras-Russell threefold from equation \eqref{KR} \cite{Leu}.

\medskip\noindent 
Here is a number of consequences the density property has, the proof of each of them is a certain application of the  Anders\'en-Lempert theorem:

\medskip\noindent
If $X$ is a Stein manifold with DP, then\\

\begin{enumerate}

\medskip
\item  $X$ is covered by Fatou-Bieberbach domains, i.e., each $x\in X$ has a neighborhood $\Omega_x \subset X$ biholomorphic to $\C^{\dim X}$ \cite{Varolin2}. In particular all Eisenman measures on $X$
vanish identically and bounded plurisubharmonic functions are constant.

\medskip
\item There is $\varphi : X \to X$, injective holomorphic not surjective (biholomorphic images of $X$ in itself) \cite{Varolin2}.

\medskip
\item If $X$ is Stein with DP,  $ \dim X \ge 3$ and $Y$ is a complex manifold such  that $End (X)$ and $End (Y)$ are isomorphic as abstract semigroups, then $X$ and $Y$ are biholomorphic or anti-biholomorphic, see \cite{Andrist1}, \cite{AndristWold}. We believe that the same is true if the dimension of $X$ is 2, but the known proofs do not apply.

\medskip
\item There are finitely many  complete holomorphic vector fields
$\theta_1, \ldots , \theta_N \in \CVF_{hol} (X)$ 
such that  ${ \rm span} (\theta_1 (x), \ldots , \theta_N (x)) = T_x X \quad  \forall x \in X$ (see \cite{KaKuPresent})
and thus $X$ is holomorphically flexible (see Definition \ref{holflex}).
\end{enumerate}

\subsection{Flexibility} 

\subsubsection{Definition and main features}

The notion of flexibility is even more  recent than the density property.  It was defined in \cite{AFKKZ}. First we state the algebraic version: 

\begin{Def}
Let $X$ be an  algebraic variety defined over $\C$ (any algebraically closed field would do). 
We let $\SAut(X)$ denote the subgroup of
$\Aut_{alg} (X)$ generated by all algebraic one-parameter unipotent
subgroups of $\Aut_{alg} (X) $, i.e., algebraic subgroups isomorphic to the
additive group $\G_a$ (usually denoted $\C^+$ in complex analysis). The group $\SAut(X)$ is called the
{\em special automorphism group} of $X$; this is a normal subgroup
of $\Aut_{alg} (X)$.
\end{Def}

\begin{Def}
We say that a point $x\in X_\reg$ is
{ \em algebraically   flexible} if the tangent space $T_x X$ is spanned by the
tangent vectors to the orbits $H\cdot x$ of one-parameter unipotent
subgroups $H\subseteq \Aut_{alg} (X)$. A variety $X$ is called {\em algebraically  flexible} if
every point $x\in X_\reg$ is.
\end{Def}

Clearly, $X$ is algebraically flexible if one
point of $X_\reg$ is and the group $\Aut_{alg} (X)$ acts transitively on
$X_{\rm reg}$.

The main feature of algebraic flexibility is the following result from \cite{AFKKZ} (whose proof mainly relies on the Rosenlicht theorem).
\begin{Thm}\label{mthm}
For an  irreducible affine variety $X$ of dimension $\ge 2$,
the following conditions are equivalent.
\begin{enumerate}
\item The group $\SAut (X)$ acts transitively on  $X_\reg$.
\item The group $\SAut (X)$ acts
infinitely transitively on $X_\reg$.
\item  $X$ is an algebraically  flexible variety.
\end{enumerate}
\end{Thm}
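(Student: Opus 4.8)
The plan is to prove Theorem~\ref{mthm} by establishing the cycle of implications $(2)\Rightarrow(1)\Rightarrow(3)\Rightarrow(2)$, of which only the last is substantial; the first two are essentially formal. The implication $(2)\Rightarrow(1)$ is immediate, since infinite transitivity on $X_\reg$ in particular gives transitivity on $X_\reg$. For $(1)\Rightarrow(3)$, suppose $\SAut(X)$ acts transitively on $X_\reg$. Fix $x\in X_\reg$; I want to show the tangent vectors to orbits of one-parameter unipotent subgroups span $T_xX$. Let $V\subseteq T_xX$ be the linear span of all such tangent vectors. Because conjugating a $\G_a$-subgroup $H$ by an automorphism $g\in\SAut(X)$ yields another $\G_a$-subgroup, and the differential $dg$ carries the orbit tangent vectors at $x$ to those at $g\cdot x$, the family of subspaces $\{V_y\}_{y\in X_\reg}$ is equivariant under the transitive $\SAut(X)$-action; hence $\dim V_y$ is constant on $X_\reg$. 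A Rosenlicht-type/generic-smoothness argument (this is where the ``Rosenlicht theorem'' enters) shows that at a generic point of $X_\reg$ the $\G_a$-orbits of a well-chosen finite collection of locally nilpotent derivations already span the full tangent space — intuitively, the union of orbits of unipotent subgroups through a generic point is Zariski-dense only if its tangent space is everything, and one rules out the ``flexibility drops'' locus by a dimension count. Combined with the constancy of $\dim V_y$, this forces $V_x=T_xX$ for all $x\in X_\reg$, i.e.\ $X$ is algebraically flexible.

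The heart of the theorem is the implication $(3)\Rightarrow(2)$: algebraic flexibility upgrades to infinite transitivity of $\SAut(X)$ on $X_\reg$. The strategy here is the standard ``moving along orbits'' technique. Given two finite tuples of distinct points $(p_1,\dots,p_m)$ and $(q_1,\dots,q_m)$ in $X_\reg$, I want an element of $\SAut(X)$ simultaneously sending $p_i\mapsto q_i$. One proceeds by a combination of transitivity (already known from $(3)\Rightarrow(1)$ since flexibility at one point plus transitivity on $X_\reg$ is built into the definitions, or rather one first uses flexibility to get transitivity) and an inductive argument on $m$: having matched the first $m-1$ points, one needs automorphisms in $\SAut(X)$ that move $p_m$ to $q_m$ while fixing $p_1,\dots,p_{m-1}$. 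This is achieved by using one-parameter unipotent subgroups whose corresponding locally nilpotent derivations vanish (to high order) at the already-placed points — such derivations can be produced by multiplying a given locally nilpotent derivation $\delta$ by a function $f\in\ker\delta$ vanishing at the $p_i$'s (so that $f\delta$ is again locally nilpotent, generating a $\G_a$-subgroup fixing those points pointwise). Flexibility at $p_m$ guarantees enough such ``replete'' vector fields to move $p_m$ in every tangent direction, and then a connectedness/orbit argument moves it all the way to $q_m$.

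The main obstacle is exactly this last step: ensuring that after modifying a locally nilpotent derivation by a function in its kernel vanishing at the previously-fixed points, one retains enough freedom to span $T_{p_m}X$ and to reach $q_m$. The delicate point is that the ideal of functions vanishing at $p_1,\dots,p_{m-1}$ might interact badly with the kernels $\ker\delta$ of the available locally nilpotent derivations — one needs that for each tangent direction at $p_m$ there is a locally nilpotent derivation $\delta$ realizing it with $\ker\delta$ large enough to contain a function vanishing at all the $p_i$, $i<m$, but not at $p_m$. Handling this requires the algebraic flexibility to be used not just at the single point $p_m$ but in a neighborhood, together with general position arguments (replacing the given derivations by generic linear-combination/conjugate variants so their kernels cut out subvarieties meeting the finite point configuration transversally). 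This is precisely the Rosenlicht-theorem input again: the quotient of $X$ by a generic $\G_a$-action is ``good enough'' that one can separate points by invariant functions. Once this is arranged, the induction closes and infinite transitivity follows.

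\medskip
\noindent\textit{Remark on the structure.} It is worth noting that $(3)\Rightarrow(1)$ is the easy direction — flexibility at a single regular point propagates to transitivity of $\SAut(X)$ on all of $X_\reg$ by an open-orbit argument (the union of the orbits through tangent directions is open, and flexibility makes it all of $X_\reg$ after using that $X_\reg$ is connected for $X$ irreducible) — so the genuinely new content of the theorem is the equivalence of transitivity with infinite transitivity, i.e.\ the surprising automatic upgrade $(1)\Leftrightarrow(2)$.
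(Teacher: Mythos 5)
The paper does not actually prove Theorem~\ref{mthm}; it quotes it from \cite{AFKKZ} with the one-line remark that the proof ``mainly relies on the Rosenlicht theorem''. Your outline follows the architecture of that cited proof (the trivial implication $(2)\Rightarrow(1)$, an orbit argument linking $(1)$ and $(3)$, and replicas $f\delta$ with $f\in\ker\delta$ for the upgrade to infinite transitivity), and you locate correctly where Rosenlicht's theorem enters. But as a proof it has two genuine gaps. The first is in $(1)\Rightarrow(3)$ (and equally in the open-orbit argument you invoke for $(3)\Rightarrow(1)$): the assertion that ``the union of orbits of unipotent subgroups through a generic point is Zariski-dense only if its tangent space is everything'' is exactly what needs proof, and it is not formal. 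The $\SAut(X)$-orbit of $x$ is a union, over all finite words in an a priori uncountable family of $\G_a$-subgroups, of images of maps $\mathbb{A}^k\to X$, and a dense union of such images could in principle consist entirely of maps of rank $<\dim X$. What is required is the structural result of \cite{AFKKZ} (proved by a Ramanujam-type maximality argument) that the orbit of a group generated by connected algebraic subgroups is a locally closed smooth subvariety whose tangent space at each point is spanned by the pushed-forward one-parameter directions, i.e.\ by tangent vectors to orbits of conjugated $\G_a$-subgroups. Granted that, openness of the orbit gives $V_x=T_xX$ at once and your constancy-of-$\dim V_y$ observation is not needed; without it, the implication is not established.

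The second gap is in the heart, $(3)\Rightarrow(2)$. You correctly identify the delicate point --- finding, for each needed tangent direction at $p_m$, a locally nilpotent derivation $\delta$ realizing it together with $f\in\ker\delta$ vanishing at $p_1,\dots,p_{m-1}$ but not at $p_m$ --- but you do not resolve it. Rosenlicht's theorem only yields that invariant \emph{rational} functions separate orbits on some invariant dense open subset; one still has to convert this into regular invariants with prescribed vanishing, and one has to handle configurations where some $p_i$ lies on the same $\delta$-orbit as $p_m$ or in the non-generic locus. In \cite{AFKKZ} this is done by first moving the tuple into general position using the already-established transitivity and by enlarging the collection of derivations to one closed under conjugation and under taking replicas. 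Writing ``general position arguments'' names the obstacle rather than overcoming it. In short, your proposal is a faithful roadmap of the proof in the cited source, but the two steps above are precisely where the mathematical content lies and they are not supplied.
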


 The paper \cite{AFKKZ} also contains versions of simultaneous transitivity (where the space $X_\reg$ is stratified by orbits of  $\SAut (X)$) and versions with jet-interpolation. Moreover, it was recently remarked 
 that the theorem holds for quasi-affine varieties, see Theorem 1.11. in \cite{FKZ}.
 
 Examples of algebraically flexible varieties are homogeneous spaces of semisimple Lie groups (or extensions of semisimple Lie groups by unipotent radicals), toric varieties without non-constant invertible regular functions, cones over flag varieties and cones over Del Pezzo surfaces of degree at least $4$, normal hypersurfaces of the form
$uv = p(\bar x)$ in $\C_{u, v, \bar x}^{n+2}$. Moreover, algebraic subsets of codimension at least $2$ can be removed as recently shown by Flenner, Kaliman and Zaidenberg in \cite{FKZ}:

\begin{Thm}  Let $ X$ be a smooth quasi-affine variety of dimension $\ge 2$ and $Y \subset  X$ a closed subvariety of codimension $\ge  2$. If $X$ is flexible then so is $X\setminus Y$.

\end{Thm}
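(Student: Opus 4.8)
The plan is to verify the definition of algebraic flexibility for $U:=X\setminus Y$ directly, point by point. First note that $U$, being open in $X$, is again a smooth quasi-affine variety of dimension $\ge 2$, so by definition it suffices to prove that for every $x_0\in U$ the tangent space $T_{x_0}U=T_{x_0}X$ is spanned by the velocity vectors at $x_0$ of orbits of one-parameter unipotent subgroups of $\Aut_{alg}(U)$. (By the quasi-affine version of Theorem \ref{mthm} from \cite{FKZ} it would be equivalent to show that $\SAut(U)$ acts transitively on $U$; we work with the pointwise statement.) Every one-parameter unipotent subgroup we use on $U$ will be obtained by restricting a suitable $\G_a$-action on the ambient flexible variety $X$.

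The engine of the proof is the \emph{replica} construction. If $H=\{\exp(t\delta)\}$ is a $\G_a$-action on $X$, with $\delta$ the associated locally nilpotent derivation of $\cO(X)$, and if $g\in\ker\delta$ is an $H$-invariant regular function, then $g\delta$ is again locally nilpotent and generates a $\G_a$-action whose fixed-point set contains the hypersurface $\{g=0\}$. Hence, if $g$ vanishes on $Y$, this action fixes $Y$ pointwise; consequently $U=X\setminus Y$ is invariant under it --- the complement of the fixed locus is invariant and contained in $U$, and the remaining points of $U$ are themselves fixed --- so the action restricts to a $\G_a$-action on $U$. If moreover $g(x_0)\ne 0$, then $x_0$ is not a fixed point, its orbit in $U$ is an affine line through $x_0$ disjoint from $Y$, and the velocity of that orbit at $x_0$ equals $g(x_0)\,\delta(x_0)$, a nonzero multiple of $\delta(x_0)$. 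Thus it is enough to produce, for finitely many directions $v$ forming a basis of $T_{x_0}X$, a pair $(\delta,g)$ consisting of a locally nilpotent derivation $\delta$ of $\cO(X)$ with $\delta(x_0)$ a nonzero multiple of $v$, together with an invariant $g\in\ker\delta$ satisfying $g|_Y=0$ and $g(x_0)\ne 0$.

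This is where the hypothesis $\codim_X Y\ge 2$ enters. Since $X$ is flexible, the vectors $\delta(x_0)$ already span $T_{x_0}X$ as $\delta$ ranges over locally nilpotent derivations, so fix such a $\delta$ with $\delta(x_0)=v\ne 0$. The only obstruction to finding a matching $g$ is that an invariant $g\in\ker\delta$ is constant along each $H$-orbit, hence along $\overline{H\cdot x_0}$; so $g$ with $g|_Y=0$, $g(x_0)\ne 0$ can exist only when $x_0$ lies outside the $H$-saturation $Z:=\overline{\bigcup_{y\in Y}\overline{H\cdot y}}$ of $Y$. As $H$-orbits are curves, $\dim Z\le\dim Y+1\le\dim X-1$, so $Z$ is a proper closed subset; and since $Y$ has codimension $\ge 2$ while $\overline{H\cdot x_0}$ is one-dimensional, a general-position argument lets us displace $Y$ off the curve $\overline{H\cdot x_0}$ --- i.e.\ replace $Y$ by $\alpha^{-1}(Y)$, equivalently $H$ by the conjugate action $\alpha H\alpha^{-1}$, for a suitable $\alpha\in\SAut(X)$ fixing $x_0$ together with its $1$-jet --- after which $x_0$ lies outside the corresponding saturation. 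Such $\alpha$ exist because $\SAut(X)$ acts infinitely transitively with jet interpolation on the flexible variety $X$ (Theorem \ref{mthm} and the general-position refinements of \cite{AFKKZ}), and conjugation does not change $\delta(x_0)=v$. Once this is arranged one obtains $g\in\ker\delta$ with $g|_Y=0$, $g(x_0)\ne 0$, and the replica $g\delta$ gives a $\G_a$-action on $U$ with orbit-velocity $g(x_0)\,v$ at $x_0$. Applying this to derivations whose velocities at $x_0$ form a basis of $T_{x_0}X$ yields one-parameter unipotent subgroups of $\Aut_{alg}(U)$ whose orbit-velocities at $x_0$ again form a basis; so $U$ is flexible at $x_0$, and, $x_0$ being arbitrary, $U$ is flexible. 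That $\codim Y\ge 2$ cannot be relaxed is clear from examples such as $\C\times\C^{*}=\C^{2}\setminus\{z_2=0\}$, which is not flexible.

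The step I expect to be the main obstacle is the actual production of the invariant $g$: disjointness of $\overline{H\cdot x_0}$ from $Y$ does not by itself force the ring of invariants $\ker\delta$ --- which need not even be finitely generated --- to separate $x_0$ from $Y$, because the image of $Y$ under the rational quotient by $H$ need not be closed. To get around this one must choose the conjugating automorphism $\alpha$ so as to place $Y$ in sufficiently general position relative to $H$ (for instance so that $Y$ is contained in a single level hypersurface of an $H$-invariant not passing through $x_0$, or meets the generic $H$-orbit properly), and it is precisely here that the codimension hypothesis on $Y$ and the abundance of automorphisms of a flexible variety are used in an essential way. This general-position analysis, together with the bookkeeping needed to make the finitely many chosen $\G_a$-actions behave simultaneously, is the technical heart of the Flenner--Kaliman--Zaidenberg argument; it is a Gromov--Winkelmann type statement, the classical case $X=\C^n$ being due to Winkelmann.
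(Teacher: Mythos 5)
The first thing to note is that the paper does not prove this theorem at all: it is quoted from Flenner--Kaliman--Zaidenberg \cite{FKZ} as a known result, so there is no in-paper argument to measure your proposal against. Judged on its own terms, your proposal correctly identifies the right mechanism --- replicas $g\delta$ of locally nilpotent derivations $\delta$, with $g\in\ker\delta$, $g|_Y=0$ and $g(x_0)\neq 0$, which fix $Y$ pointwise and hence restrict to $\G_a$-actions on $X\setminus Y$ with prescribed nonzero velocity $g(x_0)\,\delta(x_0)$ at $x_0$ --- and this is indeed the mechanism underlying \cite{FKZ}. The reduction of flexibility of $X\setminus Y$ to producing such a pair $(\delta,g)$ for a spanning set of directions at $x_0$ is sound, as is the observation that an action fixing $Y$ pointwise preserves $X\setminus Y$.

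However, there is a genuine gap at exactly the point you flag yourself: the existence of the invariant $g$. Your argument only secures the geometric condition that, after conjugating by a suitable $\alpha\in\SAut(X)$, the orbit closure $\overline{H\cdot x_0}$ is disjoint from $Y$, i.e.\ $x_0\notin Z=\overline{H\cdot Y}$. But any $g\in\ker\delta$ vanishing on $Y$ must vanish on all of $Z$, and the zero locus of the ideal $I(Z)\cap\ker\delta$ can be strictly larger than $Z$ and may well contain $x_0$: the invariant ring of a locally nilpotent derivation on a quasi-affine variety need not be finitely generated, the quotient is only a rational map, and invariants need not separate disjoint invariant closed sets. So the sentence ``Once this is arranged one obtains $g\in\ker\delta$ with $g|_Y=0$, $g(x_0)\neq 0$'' is an assertion rather than a deduction, and your closing paragraph concedes that the general-position analysis needed to justify it is precisely ``the technical heart of the Flenner--Kaliman--Zaidenberg argument.'' As written this is therefore a correct outline of the strategy with its central technical step deferred to the very reference being cited; to count as a proof it would need the separation statement for invariants (or the alternative route via transitivity of the subgroup generated by $Y$-fixing replicas combined with the quasi-affine version of Theorem \ref{mthm}) carried out in detail.
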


The holomorphic version  of this notion for a reduced complex space $X$ is much less explored, it is obviously implied by the algebraic version in case $X$ is an algebraic variety.

\begin{Def} \label{holflex} We say that a point $x\in X_\reg$ is
{\em holomorphically   flexible} if the tangent space $T_x X$ is spanned by the
tangent vectors of completely integrable holomorphic vector fields, i.e. holomorphic  one-parameter 
subgroups in $\Aut_{hol} (X)$. 
A reduced complex space  $X$ is called {\em holomorphically  flexible} if
every point $x\in X_\reg$ is.
\end{Def}
Clearly, $X$ is holomorphically flexible if one
point of $X_\reg$ is and the group $\Aut_{hol} (X)$ acts transitively on
$X_{\rm reg}$.

In the holomorphic category it is still open whether an analogue  of Theorem \ref{mthm} holds.

\smallskip\noindent
\textbf{Open Problem:} Are the three equivalences from Theorem  \ref{mthm}  true for a reduced  irreducible   Stein  space $X$?  More precisely, if a reduced irreducible  Stein  space $X$ is holomorphically flexible, does the holomorphic automorphism group 
$\Aut_{hol} (X)$ act infinitely  transitively on
$X_{\rm reg}$? 
\smallskip

It is clear that holomorphic flexibility of $X$ implies that $\Aut_{hol} (X)$ acts transitively on
$X_{\rm reg}$, i.e., the implication $ (3) \Rightarrow (1)$ is true.
Indeed, let $\theta_i, i=1, 2, \ldots, n$ be completely integrable holomorphic vector fields which span the tangent space $T_x X$ at some point $x \in X_{\rm reg}$, where $n= dim X$. If $\psi^i : \C \times X \to X, \quad (t, x)  \mapsto \psi^i_ t (x)$ denote the corresponding one-parameter subgroups, then
the map $\C^n \to X, \quad (t_1, t_2, \ldots, t_n) \mapsto \psi_{t_n}^n \circ \psi_{t_{n-1}}^{n-1} \circ \cdots \circ \psi_{t_1}^1 (x)$ is of full rank at $t =0$ and thus by the Inverse Function Theorem a local biholomorphisms from a neighborhood of $0$ to a neighborhood of $x$. Thus the $\Aut_{hol} (X)$-orbit
through any point of $X_{\rm reg}$ is open. If all orbits are open, each orbit is  also closed, being the complement of all other orbits. Since $X_{\rm reg}$ is connected, this implies that it consists of one orbit.

The inverse implication  $ (1) \Rightarrow (3)$ is also true. For the proof we appeal to the Hermann--Nagano Theorem which states that if $\mathfrak g$ is a Lie algebra of holomorphic vector fields on a manifold
$X$, then  the orbit $R_\g  (x) $ (which is the union of all points $z$ over any collection of finitely many fields $v_1, \ldots v_N \in \g $  and over all times $(t_1, \ldots, t_N)$ for  which the expression $z =\psi_{t_N}^N \circ \psi_{t_{N-1}}^{N-1} \circ \cdots \circ \psi_{t_1}^1 (x)$ is defined)
  is a locally closed submanifold and its tangent space at any point $y \in R_\g  (x)$ is $T_y R_\g (x) = span_ { v \in \g} {v (y)}$. We consider the Lie algebra $\g = \Lie (\CVF_{hol} (X))$
generated by completely integrable holomorphic vector fields. Since by the assumption the orbit is $X_{\rm reg}$, we conclude that Lie combinations of completely integrable holomorphic vector fields span the tangent space at each point in  $X_{\rm reg}$. Now suppose at some point $x_0$ the completely integrable fields do not generate $T_{x_0} X_{\rm reg}$, i.e., there is a proper linear subspace $W$ of $T_{x_0} X_{\rm reg}$, such that $v(x_0) \in W$ for all completely integrable  holomorphic fields $v$. Any Lie combination of completely integrable  holomorphic fields is a limit (in the compact open topology) of sums of completely integrable  holomorphic fields due to the formula $\{ v, w\} = \lim_{t \to 0} \frac {\phi_t^* (w) - w} t$, where $\phi_t$ is the flow of $v$, for the Lie bracket ($\phi_t^* (w)$ is a completely integrable field pulled back by an automorphism, thus completely integrable!). Therefore all Lie combinations of completely integrable fields evaluated  at  $x_0$ are contained in $W \subset T_{x_0} X_{\rm reg}$, a contradiction.

In order to prove the  remaining implication $(3) \Rightarrow (2)$ in the same way as in the algebraic case, one would like  to find suitable  functions $f \in {\ker } \  \theta$ for a completely integrable holomorphic vector field $\theta$, vanishing at one point and not vanishing at some other point of $X$.  In general these functions may not exist, an orbit of $\theta$ can be holomorphically Zariski dense in $X$.

At this point it is worth mentioning that for a Stein manifold the density property DP implies all three conditions from Theorem \ref{mthm}. For flexibility this has been mentioned above,  infinite transitivity (with jet-interpolation) is proved by Varolin in \cite{Varolin2}, see also Lemma \ref{movingpoints} below.

The main importance of holomorphic flexibility is the fact that holomorphically flexible manifolds are sources for the Oka-Grauert-Gromov-h(omotopy)-principle in Complex Analysis. 

Let us explain this more precisely. A holomorphically flexible complex manifold $X$   is an Oka--Forstneri\v c manifold which means it is an appropriate (nonlinear) target for generalizing classical Oka--Weil interpolation and Runge approximation for holomorphic functions (linear target $\C$) or sections of vector bundles (linear target as well). More precisely,  the following is true for an Oka--Forstneri\v c manifold  $X$ (see \cite{For} Corollary 5.4.5.): 

\medskip\noindent
 {\bf Oka  principle}\label{Oka}

{\em 
\smallskip\noindent
For any Stein space $W$, complex subspace $W^\prime$, compact $\mathcal O (W)$-convex subset  $K = \widehat{K} \subset W$
and any $\varphi :  W \to X$ continuous, such that the restriction to $W^\prime  \cup K$ is holomorphic,
there is a homotopy of continuous maps

$$ h: [0,1] \times W \to X $$

\noindent
from the continuous  map $h_0 = \varphi$ to  a holomorphic map $h_1$,   \\

\smallskip\noindent
with interpolation:\hskip 1.4  cm  $h_t = \varphi$ on $W^\prime$ 
and \\
with approximation:  \hskip 1 cm  $\vert h_t - \varphi \vert_K$ arbitrary small $\ \forall \ t\in [0,1]$.\\

\noindent
Moreover parametric versions are true: The inclusion of the space of holomorphic maps $\Hol (W, X)$ into the space of continuous maps $\Cont (W, X)$ is a weak homotopy equivalence. 

}

We refer the reader to the monograph of Forstneri\v c for more details. Let us just remark that Gromov 
introduced the notion of
an elliptic manifold, which by definition is a complex manifold with a dominating spray, and proved 
that the above conclusions are true for an elliptic manifold.

 A spray for a complex manifold $X$ is a holomorphic vector bundle $\pi : E \to X$ together with a holomorphic (spray) map
$s: E \to X$, such that $s$ is the identity on the zero section $X\hookrightarrow E$. The spray is dominating 
if for each $x \in X$ the induced differential map  sends the fibre $E_x = \pi^{-1} (x)$, viewed as a linear subspace of $T_x E$ surjectively onto $T_xX$. Gromov's standard example for a spray is used to see that
a holomorphic flexible manifold is elliptic. 

\begin{Exa}\label{Gromovspray} Let $X$ be holomorphically flexible. We need an easy fact proved by the author in \cite{flexibility} (see also the appendix of \cite{AFKKZ}), namely that there are finitely many completely integrable holomorphic vector fields $\theta_1, \theta_2, \ldots, \theta_N$ such that at each point $x\in X$ they
span the tangent space (by definition  for each point there are finitely many spanning at this point only). 
Let $\psi^i : \C \times X \to X, (t,x) \mapsto \psi_t^i (x)$ denote the corresponding flow maps. 

Then the map $s: \C^N \times X \to X$
defined by $((t_1, t_2, \ldots, t_N), x) \mapsto \psi_{t_N}^N \circ \psi_{t_{N-1}}^{N-1} \circ \cdots \circ \psi_{t_1}^1 (x)$ is of full rank at $t=0$ for any $x$,
and thus a dominating spray map from the trivial bundle $X\times \C^N \to X$.
\end{Exa}

\section{Applications to natural geometric questions}

All applications described below are applications of the density property DP and some of them use flexibility at the same time. Sometimes variants of the density property 
are used. There are versions for volume preserving automorphisms, so called Volume Density Property, relative versions, e.g., for automorphisms fixing subvarieties or 
fibered versions (considering automorphisms  leaving invariant a fibration). We leave it to the interested reader to look up in the given references which version is used. Only the 
very last application does not use DP, it uses flexibility only via a stratified version of the Oka principle.

\smallskip\noindent
{\bf (1)}
A first application that we would like to mention is to the notoriously difficult question whether every open Riemann surface can be properly holomorphically embedded into $\C^2$. This is the only dimension for which   the conjecture of Forster \cite{Fo}, saying
that every Stein manifold of dimension $n$ can be properly holomorphically embedded into $\C^N$ for $N= [\frac{3 n}{2}] + 1$, is still unsolved. The conjectured dimension is sharp by examples of Forster \cite{Fo} and has been proven by Eliashberg, Gromov \cite{EG} and Sch\"urmann \cite{Sch} for all dimensions $n\ge 2$.
Their methods of proof fail in dimension $n=1$. But Forn\ae ss Wold invented a clever combination of a use of shears (nice  projection property) and Theorem \ref{AL-Theorem} which led to  many new embedding theorems for open Riemann surfaces. As an example
we like to mention the following two recent results of Forstneri\v c and Forn\ae ss Wold \cite{FW}, \cite{FW1} the first of them being the most general one for open subsets of the complex line:

\begin{Thm} Every domain in the Riemann sphere with at least one  and at most countably many boundary components, none of which are points, admits a proper holomorphic embedding into $\C^2$.
\end{Thm}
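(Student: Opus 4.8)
The plan is to follow the strategy pioneered by Forn\ae ss Wold, combining the good projection property of shear automorphisms with the Anders\'en--Lempert Theorem \ref{AL-Theorem} applied to $X = \C^2$ (which has the density property). Let $\Omega \subset \widehat{\C}$ be a domain with at least one and at most countably many boundary components, none of which are points. First I would reduce to the case of a bordered Riemann surface exhausted by an increasing sequence of bordered surfaces $\Omega_1 \Subset \Omega_2 \Subset \cdots$ with $\bigcup_j \Omega_j = \Omega$, each $\Omega_j$ having smooth boundary consisting of finitely many Jordan curves; this uses that the boundary components are nondegenerate (not points), so each can be approximated from inside by a Jordan curve. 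The desired embedding $\C^2$ will be produced as a limit of embeddings $f_j : \overline{\Omega_j} \to \C^2$, where $f_{j+1}$ is obtained from $f_j$ by precomposition with a suitable holomorphic automorphism-like correction on a neighborhood, supplied by Theorem \ref{AL-Theorem}.

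The key steps, in order: (i) Start with an embedding (or at least an immersion that can be perturbed to an embedding) $f_1 : \overline{\Omega_1} \to \C^2$; such initial embeddings of a finitely-connected planar domain into $\C^2$ are classical. (ii) Inductively, given an embedding $f_j : \overline{\Omega_j} \to \C^2$ with $f_j(\overline{\Omega_j})$ a Runge, polynomially convex compact, extend it to an embedding of $\overline{\Omega_{j+1}}$ by first using the \emph{nice projection property}: arrange that one coordinate projection $\pi$ restricted to $f_j(\overline{\Omega_j})$ is proper and finite onto its image, so that $f_j(\overline{\Omega_j})$ looks like a branched multigraph over a planar domain; this is where shears $(z,w)\mapsto (z, w + g(z))$ are used to straighten things out. (iii) Push the new ``collar'' $\overline{\Omega_{j+1}} \setminus \Omega_j$ out toward infinity in a controlled way so that the image stays polynomially convex and the map stays injective, while guaranteeing that points leaving $\Omega_j$ at stage $j$ travel to Euclidean norm $\ge j$ and never come back — this properness bookkeeping is the standard ``push to infinity'' argument. (iv) Realize the correction of step (iii) as $\Phi_1$ for an isotopy $\Phi_t$ of injective holomorphic maps with Runge images satisfying the hypotheses of Theorem \ref{AL-Theorem}, and replace it by a genuine automorphism $\alpha_j \in \Aut_{hol}(\C^2)$ that approximates it closely on the compact $f_j(\overline{\Omega_j})$; set $f_{j+1} = \alpha_j \circ (\text{extension})$. (v) With approximation errors summable (a telescoping $\sum \epsilon_j < \infty$ estimate on each $\overline{\Omega_j}$), the sequence $f_j$ converges locally uniformly to a holomorphic map $f : \Omega \to \C^2$ which is injective, an immersion, and proper by the norm-escape control.

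\textbf{The main obstacle.} The delicate point — and the heart of Forn\ae ss Wold's technique — is step (ii)--(iii): maintaining \emph{polynomial (Runge) convexity} of the images throughout the induction while simultaneously pushing the new boundary collar to infinity. Polynomial convexity is exactly the hypothesis (3) in Theorem \ref{AL-Theorem} that one must verify at every stage, and it is fragile: an arbitrary isotopy of the compact set will destroy it. For infinitely (countably) many boundary components this is compounded by the need to handle a collar that, after finitely many steps, may itself be a planar domain with many holes, and one must ensure the exposed-point / nice-projection manipulations can be carried out uniformly. Controlling this requires the careful local model of $f_j(\overline{\Omega_j})$ as a multisheeted branched cover over a Runge domain in $\C_z$, exposing boundary points one component at a time via small local biholomorphic modifications supported near a single boundary point (which preserve polynomial convexity), and then applying a shear adapted to that picture. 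The countability of the ends, rather than finiteness, is handled by a diagonal argument across the exhaustion: at stage $j$ one only needs to deal with the finitely many ends ``visible'' in $\overline{\Omega_{j+1}}$, so no single step faces infinitely many components. Once the polynomial convexity is maintained, the rest of the argument is the now-standard Anders\'en--Lempert iteration, and I would cite \cite{FW}, \cite{FW1} for the technical execution of the collar manipulations.
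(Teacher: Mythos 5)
The paper does not prove this theorem: it is quoted as a result of Forstneri\v c and Forn\ae ss Wold with a pointer to \cite{FW}, \cite{FW1}, and the only indication of method given in the text is the one-line description ``a clever combination of a use of shears (nice projection property) and Theorem \ref{AL-Theorem}''. Your outline is consistent with that description and with the actual strategy of \cite{FW}, so there is no divergence of approach to report --- but there is also no proof in the paper to measure you against, and your text is likewise a plan rather than a proof, since at the decisive moments (exposing boundary points, preserving polynomial convexity, the collar manipulations) you explicitly defer to the same references the paper cites.

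Two substantive remarks on the plan itself. First, the proof in \cite{FW} begins with a reduction you omit entirely: by the He--Schramm uniformization theorem, a domain in the Riemann sphere with at most countably many boundary components, none of which are points, is conformally equivalent to a circled domain (complement of countably many disjoint closed round discs). The inductive push-to-infinity construction is then carried out for circled domains, where the boundary components have a uniform, tame geometry; without this normalization the boundary components may be wild continua accumulating everywhere, and your step of ``approximating each component from inside by a Jordan curve'' does not by itself give the control needed to guarantee properness of the limit map at every boundary component. Second, your ``diagonal argument across the exhaustion'' understates the real difficulty with infinitely many components: it is not enough that each stage sees only finitely many ends; one must ensure that every sequence in $\Omega$ leaving every compact set --- including sequences converging to a boundary component that only becomes ``visible'' at a late stage --- has image escaping to infinity, and this is precisely the bookkeeping that the circled-domain normalization and the careful choice of which components to send to infinity at each stage are designed to achieve. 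Neither point invalidates your plan, but both are gaps that the cited paper fills and your sketch does not.
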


\begin{Thm}  If $\bar \Sigma$ is a (possibly reducible) compact complex curve in $\C^2$ with boundary $\partial \Sigma$ of class $C^r$ for some $ r > 1$, then the inclusion map 
$i: \Sigma = \bar \Sigma\setminus  \partial \Sigma \to \C^2$ can be approximated, uniformly on compacts in $\Sigma$, by proper holomorphic embeddings $\Sigma \to \C^2$. 
\end{Thm}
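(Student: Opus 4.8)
The idea is to produce the desired proper embedding as a limit, locally uniform on $\Sigma$, of a sequence of holomorphic embeddings $f_j\colon \Sigma\to\C^2$ obtained from the inclusion $f_0=i$ by successively composing with automorphisms of $\C^2$ furnished by the Anders\'en--Lempert Theorem \ref{AL-Theorem} (with $X=\C^2$). First exhaust $\Sigma$ by an increasing sequence of smoothly (even real-analytically) bounded compact bordered subsurfaces $\bar\Sigma_1\subset\Sigma_2\subset\bar\Sigma_2\subset\cdots$ with $\bigcup_j\Sigma_j=\Sigma$; each $\bar\Sigma_j$ is a genuine compact subset of $\Sigma$, sitting off the boundary $\partial\Sigma$, and we may assume the compact set $K\subset\Sigma$ on which approximation is wanted lies in $\Sigma_1$. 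Fix radii $R_1<R_2<\cdots\to\infty$, balls $B_R=\{\zeta\in\C^2:|\zeta|<R\}$, and a rapidly decreasing summable sequence $\epsilon_j>0$. The bookkeeping maintained along the induction will be: $f_j(\bar\Sigma_j)$ is polynomially convex and contained in $B_{R_j}$; $f_j\bigl(\Sigma\setminus\Sigma_j\bigr)\cap\overline{B}_{R_{j-1}}=\emptyset$; and $\|f_{j+1}-f_j\|_{\bar\Sigma_j}<\epsilon_j$, with the $\epsilon_k$ for $k>j$ chosen small enough in advance that all later maps keep $f_k(\Sigma\setminus\Sigma_j)$ off $\overline{B}_{R_{j-1}-1}$.

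\textbf{Two auxiliary ingredients.} The first is a preliminary normalization: at the start of step $j$ we perturb $f_j$ slightly (error charged to $\epsilon_j$) so that $f_j(\bar\Sigma_j)$ is polynomially convex. Here the regularity hypothesis $r>1$ on $\partial\Sigma$ enters: a compact complex curve in $\C^2$ with boundary of class $C^r$, $r>1$, can be made polynomially convex by an arbitrarily small isotopy of embeddings. The second, and the technical heart, is the Forstneri\v c--Forn\ae ss Wold ``nice projection property'' for shears: for each $j$ one can choose a polynomial shear $\phi_j(z,w)=(z,\,w+p_j(z))$ of $\C^2$ so that the linear projection $\pi(z,w)=z$ restricts to $\phi_j\bigl(f_j(\bar\Sigma_{j+1})\bigr)$ as a finite branched covering onto a plane domain which is well controlled over the ``collar'' $\bar\Sigma_{j+1}\setminus\Sigma_j$. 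This graphical picture is precisely what allows one to slide the collar off to infinity in the $w$-direction while keeping the moving sets graphical over plane domains, hence polynomially convex, hence Runge --- so that hypothesis (3) of Theorem \ref{AL-Theorem} is preserved during the motion.

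\textbf{Inductive step and passage to the limit.} Given $f_j$ as above, pass via $\phi_j$ to the nice-projection picture and, using the projection, write down an explicit $C^1$ isotopy $\Phi_t$, $t\in[0,1]$, of injective holomorphic maps from a Stein Runge neighborhood $\Omega$ of $\phi_j(f_j(\bar\Sigma_{j+1}))$ into $\C^2$, with $\Phi_0$ the inclusion, $\Phi_t$ equal to the identity near $\phi_j(f_j(\bar\Sigma_j))$, $\Phi_t(\Omega)$ Runge for all $t$, and $\Phi_1$ carrying the collar $\phi_j(f_j(\bar\Sigma_{j+1}\setminus\Sigma_j))$ outside $\overline{B}_{R_{j+1}}$. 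Theorem \ref{AL-Theorem} then gives $\Psi\in\Aut_{hol}(\C^2)$ approximating $\Phi_1$ as closely as we wish on a compact neighborhood of $\phi_j(f_j(\bar\Sigma_{j+1}))$; set $f_{j+1}=\phi_j^{-1}\circ\Psi\circ\phi_j\circ f_j$, again a holomorphic embedding of $\Sigma$ into $\C^2$. Choosing the approximation fine enough gives $\|f_{j+1}-f_j\|_{\bar\Sigma_j}<\epsilon_j$, preserves polynomial convexity of $f_{j+1}(\bar\Sigma_{j+1})$ (inherited from the Runge isotopy), places $f_{j+1}(\bar\Sigma_{j+1})$ inside $B_{R_{j+1}}$ after possibly enlarging $R_{j+1}$, and makes $f_{j+1}(\Sigma\setminus\Sigma_{j+1})$ disjoint from $\overline{B}_{R_j}$. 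With $\sum_j\epsilon_j$ small, $f=\lim_j f_j$ exists and is holomorphic on $\Sigma$; on each $\bar\Sigma_j$ it is a uniform limit of holomorphic embeddings staying $\epsilon$-close to $f_j$, so quantitative injectivity and immersivity estimates valid on the compact piece $\bar\Sigma_j$ pass to $f$, making $f$ an injective immersion on every $\Sigma_j$, hence on $\Sigma$; the ball-bookkeeping separates $f(\Sigma_j)$ from $f(\Sigma\setminus\Sigma_j)$, yielding global injectivity. Properness: given $R$, pick $j$ with $R_{j-1}>R$; then $f(\Sigma\setminus\Sigma_j)\cap\overline{B}_R=\emptyset$, so $f^{-1}(\overline{B}_R)\subset\bar\Sigma_j$ is compact. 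Finally $\|f-i\|_K\le\sum_{j\ge0}\|f_{j+1}-f_j\|_K\le\sum_j\epsilon_j$ is as small as desired, since $K\subset\bar\Sigma_1\subset\bar\Sigma_j$ for all $j$.

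\textbf{Main obstacle.} The Anders\'en--Lempert machinery is black-boxed in Theorem \ref{AL-Theorem}, so the real difficulty is the construction of the isotopy $\Phi_t$ that pushes the collar of $f_j(\bar\Sigma_{j+1})$ to infinity while keeping the moving image Runge (equivalently, polynomially convex) at every instant --- a careless outward push of a piece of a complex curve can create a ``bubble'' whose polynomial hull fattens and destroys the Runge property. Overcoming this is exactly the point of the nice projection property of shears, together with the preliminary reduction to polynomially convex compact pieces $f_j(\bar\Sigma_j)$, which is where the $C^r$, $r>1$, smoothness of $\partial\Sigma$ is genuinely used.
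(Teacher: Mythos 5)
The paper itself gives no proof of this theorem: it is quoted from Forstneri\v c and Forn\ae ss Wold \cite{FW1}, with only the remark that the proof is ``a clever combination of a use of shears (nice projection property)'' with the Anders\'en--Lempert theorem. Your sketch reproduces the general shape of that strategy (exhaustion, push-out induction, Runge isotopies, limit bookkeeping), but it has a genuine gap at the one point where the actual proof needs its new idea.

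The gap is this: at stage $j$ you must arrange that $f_{j+1}(\Sigma\setminus\Sigma_{j+1})$ --- the image of the \emph{entire} end of $\Sigma$, whose closure contains the image of the compact curve $\partial\Sigma$ --- avoids the ball $\overline B_{R_j}$. But the automorphism $\Psi$ produced by Theorem \ref{AL-Theorem} is controlled only on a compact neighborhood of $\phi_j(f_j(\bar\Sigma_{j+1}))$; outside that compact set you have no information about $\Psi$ at all, so nothing forces the image of the end off the ball, and the properness bookkeeping collapses. Conjugating by a polynomial shear $\phi_j$ does not repair this as long as the curve is compact: the nice projection property is a property of \emph{unbounded} boundary curves (the ends must project properly to the $z$-axis, so that explicit global shears $(z,w)\mapsto(z,w+p(z))$ move the whole end in a way readable off the projection). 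The missing step in Forstneri\v c--Forn\ae ss Wold is precisely the reduction to that situation: one first \emph{exposes} a boundary point on each component of $\partial\Sigma$ by a small deformation of the embedding near the boundary (this is where the $C^r$, $r>1$, regularity and a Mergelyan-type approximation on $\bar\Sigma$ are genuinely used), and then applies a rational shear with poles at the exposed points, sending them to infinity and producing an embedded copy of $\Sigma$ whose boundary curves are unbounded and satisfy the nice projection property; only then does the push-to-infinity induction (Wold's lemma) close up. Your auxiliary claim that $\bar\Sigma$ ``can be made polynomially convex by an arbitrarily small isotopy'' is likewise unsubstantiated and is not the role the regularity hypothesis plays.
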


Many versions of embeddings with interpolation are also known and proven using the same  methods invented by Forn\ae ss Wold in \cite{W}. In particular the Gromov--Eliashberg--Sch\"urmann Theorem mentioned above is true with interpolation on discrete subsets \cite{IFKP}.

\smallskip\noindent
{\bf (2)}
Another application is to construct non-straightenable holomorphic embeddings of $\C^k$ into $\C^n$ for all pairs of dimensions $0<k<n$, a fact which is contrary to the situation in affine algebraic geometry. It is contrary to the famous Abhyankar-Moh-Suzuki theorem for $k=1, n=2$ and also to work of Kaliman \cite{Ka} for  $2k+1 < n$, whereas straightenability for the other dimension pairs is  still unknown in algebraic geometry. Here non-straightenable for an embedding  $\C^k$ into $\C^n$ means to be not equivalent to the standard embedding.

To give the reader an idea about the flavor of the subject, let us explain briefly how Theorem \ref{AL-Theorem} is used to construct a non-straightenable embedding of
$\C$ into $\C^n$.

The idea is to use the existence of a non-tame discrete subset $E=\{ e_1, e_2, \ldots \}$ in $\C^n$ and to construct
a proper holomorphic embedding $\varphi : \C \to \C^n$ whose image contains $E$, $\varphi (\C)\supset E$. The notion of a tame subset in $\C^n$ goes back to 
Rosay and Rudin \cite{RR88} and is by definition a subset in $A \subset \C^n$ which can be mapped by a holomorphic automorphism $\alpha \in \Aut_{hol} (\C^n)$ onto the
subset $\{ (i, 0 \ldots, 0)\in \C^n  \ , i \in \N \}$.
The only information we need is the fact that any countable discrete subset of the first
coordinate line $\{ (z, 0 \ldots, 0)\in \C^n  \  : \ z \in \C \}$ is tame. This shows that
our embedding containing a non-tame subset $E$ cannot be straightenable. Indeed, were it straightenable, the set $E$ would be mapped by the straightening automorphism into the first coordinate line and therefore be tame, a contradiction.
The existence of non-tame subsets of $\C^n$ has been proved by Rosay and Rudin \cite{RR88} and for the reader familiar with Eisenman measures we just mention that
one can construct a discrete subset in $\C^n$ whose complement is Eisenman $n$-hyperbolic (also called volume hyperbolic). Since the complement of the first coordinate line has constantly vanishing Eisenman volume such an $E$ cannot be contained in a coordinate line. The step where we embed $\C$ through that set is where we make use of DP applying Theorem \ref{AL-Theorem}. Let us describe this a little more detailed in order to let the reader feel the flavor of the subject. The step where we embed the line through a discrete subset is presented in the more general situation where $\C^n$ is replaced by any Stein manifold with DP.

We begin with an easy lemma which already demonstrates the power of the density property. Expressed in words it means that $N$ different   points can be moved around independently of each other by automorphisms   inside small neighborhoods, and the nearer the points are to their targets the nearer the automorphism is to the identity.
This lemma easily implies that the holomorphic automorphism group of $X$ acts infinitely transitive on $X$, a result of Varolin \cite{Varolin2}.  

\begin{Lem}\label{movingpoints} For any $N$-tuple of pairwise distinct points $x_1, x_2, \ldots, x_N$ in a Stein manifold $X$ of dimension $\dim X = n$ with DP, there is an open neighborhood  $P$ of $0 \in \C^{n\cdot N}$ together with an injective holomorphic map
$\psi : P \to \Aut_{hol} (X)$ satisfying $\psi (0) = id$ such that the map $ P \to X^N$ defined by $p \mapsto (\psi (p) (x_1),
 \psi (p) (x_2), \ldots , \psi (p) (x_N)$ is a biholomorphism from $P$ to an open neighborhood of the point 
 $(x_1, x_2, \ldots , x_N)$ in the $N$-fold product $X^N$.
\end{Lem}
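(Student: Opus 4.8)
The plan is to exhibit $\psi$ as a composition of flows of finitely many \emph{complete} holomorphic vector fields; then the only genuine work is to arrange the correct first--order behaviour of the resulting family at the $N$-tuple $(x_1,\dots ,x_N)$, and the density property is invoked only once.

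Set $V:=\bigoplus_{j=1}^{N} T_{x_j}X$, so $\dim_\C V=nN$. The core claim I would establish first is that there exist finitely many complete holomorphic vector fields $\theta_1,\dots ,\theta_r\in \CVF_{hol}(X)$ whose joint values at the chosen points span $V$; that is, the linear map $\C^{r}\to V$, $(s_1,\dots ,s_r)\mapsto \bigl(\sum_i s_i\theta_i(x_1),\dots ,\sum_i s_i\theta_i(x_N)\bigr)$, is surjective. Granting this, let $\varphi^{i}\colon \C\times X\to X$ be the flow of $\theta_i$, which is globally defined since $\theta_i$ is $\C$-complete, and put $\Psi\colon \C^{r}\times X\to X$, $\Psi(s,x)=\varphi^{r}_{s_r}\circ\cdots\circ\varphi^{1}_{s_1}(x)$. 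Each $\Psi(s,\cdot)$ is a holomorphic automorphism with $\Psi(0,\cdot)=\id$; the map $(s,x)\mapsto\Psi(s,x)$ and the corresponding inverse $(s,x)\mapsto\varphi^{1}_{-s_1}\circ\cdots\circ\varphi^{r}_{-s_r}(x)$ are holomorphic; and $\tfrac{\partial}{\partial s_i}\Psi(s,x)\big|_{s=0}=\theta_i(x)$. Hence $G\colon \C^{r}\to X^{N}$, $G(s)=(\Psi(s,x_1),\dots ,\Psi(s,x_N))$, satisfies $G(0)=(x_1,\dots ,x_N)$ and $dG_0$ is exactly the surjection above. I would then choose a linear subspace $L\subset\C^{r}$ with $dG_0|_L\colon L\xrightarrow{\sim}V$, identify $L\cong\C^{nN}$, and apply the holomorphic inverse function theorem to get a neighbourhood $P$ of $0$ in $L$ on which $G|_L$ is a biholomorphism onto a neighbourhood of $(x_1,\dots ,x_N)$ in $X^{N}$. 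Then $\psi(p):=\Psi(p,\cdot)$ for $p\in P$ does the job: $\psi(0)=\id$, $\psi$ is holomorphic, and $p\mapsto(\psi(p)(x_1),\dots ,\psi(p)(x_N))=G(p)$ is the asserted biholomorphism. Injectivity of $\psi$ is automatic, since $\psi(p)=\psi(p')$ forces $G(p)=G(p')$, hence $p=p'$.

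It remains to prove the core claim, and this is where Steinness and the density property enter. Since the tangent sheaf of $X$ is coherent, Cartan's Theorem~B applied to the ideal sheaf of the finite set $\{x_1,\dots ,x_N\}$ shows that evaluation $\VF_{hol}(X)\to V$ is surjective. Next, as recalled above in the discussion of holomorphic flexibility, every element of $\Lie(\CVF_{hol}(X))$ lies in the compact--open closure of the $\C$-linear span $\operatorname{span}_{\C}(\CVF_{hol}(X))$: one iterates the bracket formula $[v,w]=\lim_{t\to0} t^{-1}\bigl((\phi^{v}_{t})^{*}w-w\bigr)$, using that $(\phi^{v}_{t})^{*}w$ is again complete and that $[v,\,\cdot\,]$ is continuous for the compact--open topology. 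Together with the density property this shows $\operatorname{span}_{\C}(\CVF_{hol}(X))$ is dense in $\VF_{hol}(X)$. Since evaluation at $(x_1,\dots ,x_N)$ is continuous, linear and surjective onto $V$, its restriction to this dense subspace has image a linear subspace of $V$ with closure $V$; as linear subspaces of a finite--dimensional space are closed, that image is already all of $V$. Hence finitely many of the complete fields occurring span $V$ at the points $x_j$, which is the core claim.

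The only point I expect to be delicate is this last reduction: producing genuine complete vector fields (rather than iterated brackets of complete ones) whose joint values at the $N$ prescribed points fill the product tangent space $V$. Everything afterwards --- composing flows and invoking the inverse function theorem --- is routine. Alternatively one could realise the local movement near $\{x_1,\dots ,x_N\}$ by an honest automorphism isotopy and apply the Anders\'en--Lempert Theorem~\ref{AL-Theorem}, but the flow--composition argument is shorter and delivers the holomorphic dependence on the parameter $p$ directly.
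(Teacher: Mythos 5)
Your argument is correct and follows essentially the same route as the paper: produce finitely many complete holomorphic vector fields whose values at $x_1,\dots,x_N$ span $\bigoplus_{j} T_{x_j}X$ (using the density property together with the fact that Lie combinations of complete fields are compact--open limits of sums of complete fields), then compose their flows and apply the inverse function theorem, with injectivity of $\psi$ read off from injectivity of the evaluation map. The only cosmetic difference is that the paper seeds the construction with locally constant fields on disjoint Runge coordinate balls around the $x_i$, whereas you obtain surjectivity of the evaluation map $\VF_{hol}(X)\to\bigoplus_{j} T_{x_j}X$ directly from Cartan's Theorem B; both reductions are standard and interchangeable here.
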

\begin{proof} Choose coordinate balls $B_i$ which are Runge around each point $x_i$ so small that their union $\cup_{i=1}^N B_i$ is Runge too.  Consider the collection of $n N$ vector fields $\theta_i^j$ defined on $\cup_{i=1}^N B_i$ by 
$\theta_i^j =  \frac \partial {\partial z_j}$ on $B_i$ and identically zero on $B_k$ for $k\ne i$. They naturally induce
vector fields on $B_1 \times B_2 \times \cdots \times B_N$ which span the tangent space to $X^N$ at each point there.
Using DP they can  be approximated by Lie combinations, in fact sums (see the proof of $(1) \Rightarrow (3)$ after Definition \ref{holflex}) of complete vector fields. Linear algebra shows that there are $nN$ complete holomorphic
vector fields $\tilde \theta_i$ whose naturally induced fields  on $X^N$ span the tangent space at the point $(x_1, x_2, \ldots, x_N)$. The map from $\C^{nN}$ to $X^N$ defined  by applying the flows of the fields in any (but fixed) order to the point 
$(x_1, x_2, \ldots, x_N)$  (as in Example \ref{Gromovspray}) is of full rank. By the implicit function theorem it is a local biholomorphism, which finishes the proof.
	\end{proof}

\begin{Prop} \label{interpol} Given an analytic subset $S$ in a Stein manifold $X$ with DP and a countable discrete subset $E\subset X$. Then there is a proper holomorphic embedding
$\varphi : S \hookrightarrow X$ with $E \subset \varphi (S)$.
\end{Prop}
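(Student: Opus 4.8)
The plan is to produce $\varphi$ as a locally uniform limit of automorphisms of $X$ applied to the inclusion $\iota\colon S\hookrightarrow X$, by an inductive Anders\'en--Lempert construction in the spirit of Forn\ae ss Wold. Fix normal exhaustions $K_1\Subset K_2\Subset\cdots$ of $S$ by $\cO(S)$-convex compacts and $L_1\Subset L_2\Subset\cdots$ of $X$ by $\cO(X)$-convex compacts (with $K_0=L_0=\emptyset$), enumerate $E=\{e_1,e_2,\dots\}$, and choose a discrete sequence $p_1,p_2,\dots\in S$ escaping every compact of $S$, with $p_k\in K_{n(k)}\setminus K_{n(k)-1}^{\circ}$ for a strictly increasing sequence $n(k)$. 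I would build embeddings $\varphi_j=\alpha_j\circ\cdots\circ\alpha_1\circ\iota\colon S\to X$, $\alpha_i\in\Aut_{hol}(X)$, $\varphi_0=\iota$, such that for all $j$: (1) $|\varphi_j-\varphi_{j-1}|_{K_{j-1}}<\varepsilon_j$; (2) $\varphi_j(K_j\setminus K_{j-1}^{\circ})\cap L_{j-1}=\emptyset$; (3) $\varphi_j(p_k)=e_k$ whenever $n(k)\le j$ (omitting those $e_k$ that already happen to lie on the current image). Choosing $\varepsilon_j$ recursively small enough that $\sum_{i>j}\varepsilon_i$ is less than both the distance from $\varphi_j(K_j\setminus K_{j-1}^{\circ})$ to $L_{j-1}$ and a ``stability radius'' for injectivity and immersivity of $\varphi_j$ on $K_j$, the sequence $(\varphi_j)$ is uniformly Cauchy on every $K_m$, hence converges locally uniformly to a holomorphic map $\varphi$; by (1) and the choice of the $\varepsilon_j$ this $\varphi$ is an injective immersion, i.e.\ a biholomorphism onto a locally closed submanifold; by (2) and the choice of the $\varepsilon_j$ one keeps $\varphi(K_j\setminus K_{j-1}^{\circ})\cap L_{j-1}=\emptyset$ for all $j$, whence $\varphi^{-1}(L_m)\subset K_m$ and $\varphi$ is proper; and (3) yields $E\subset\varphi(S)$.

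For the inductive step, given $\varphi_{j-1}$, I would first observe that $\varphi_{j-1}(S)$ is the image, under an automorphism of $X$, of the closed subvariety $S$ --- on which, by Cartan's extension theorem, every holomorphic function extends to $X$; this ``extendability'' passes to $\varphi_{j-1}(S)$, and consequently $\varphi_{j-1}(K_j)$ is $\cO(X)$-convex and possesses arbitrarily small Runge Stein neighbourhoods $\Omega$ in $X$. On such an $\Omega$ I would write down a $C^1$ isotopy $\Phi_t\colon\Omega\to X$ of holomorphic injections with $\Phi_t(\Omega)$ Runge, $\Phi_0=\mathrm{incl}$, which (a) is the identity near $\varphi_{j-1}(K_{j-1})$, (b) drags the compact shell $\varphi_{j-1}(K_j\setminus K_{j-1}^{\circ})$ out of $L_{j-1}$ along arcs in $X\setminus\varphi_{j-1}(K_{j-1})$, and (c) sends each point $\varphi_{j-1}(p_k)$ with $n(k)=j$ to $e_k$; part (c) is precisely Lemma \ref{movingpoints}, and part (b) is where the flexibility of a DP manifold enters. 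Theorem \ref{AL-Theorem} then supplies $\alpha_j\in\Aut_{hol}(X)$ approximating $\Phi_1$ as closely as we wish on a compact neighbourhood of $\varphi_{j-1}(K_j)$, and the finitely many interpolation values $\varphi_j(p_k)=e_k$ can be made exact using the implicit-function statement built into Lemma \ref{movingpoints}. Choosing the approximation fine enough secures (1)--(3) and the recursive smallness of $\varepsilon_j$, completing the step.

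The genuinely delicate point, as always in this circle of ideas, is concentrated in the inductive step: one must carry out both the ``push the shell to infinity'' isotopy and the relocation of the marked points \emph{through domains that stay Runge in $X$}, so that the hypotheses of Theorem \ref{AL-Theorem} remain in force at the next stage, while disturbing $\varphi_{j-1}$ on $K_{j-1}$ by no more than $\varepsilon_j$; keeping all the compacts involved $\cO(X)$-convex along these isotopies is the technical heart. This is exactly where the density property --- not merely infinite-dimensionality of $\Aut_{hol}(X)$ --- is indispensable: besides Theorem \ref{AL-Theorem} and Lemma \ref{movingpoints}, it is used that a Stein manifold with DP is covered by Fatou--Bieberbach domains and carries a very large, transitive automorphism group, which is what makes the required local motions realizable by global automorphisms. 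For $X=\C^n$ this is classical, the shell being pushed out explicitly with shears and overshears; the general case is the corresponding Forn\ae ss Wold-type bookkeeping. Finally, the cases of singular $S$ and of finite $E$ require only routine modifications, since the argument uses just that $S$ is a closed Stein subspace of $X$ and performs every Anders\'en--Lempert move on $X$ itself.
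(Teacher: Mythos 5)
Your overall framework---composing automorphisms produced by Theorem \ref{AL-Theorem}, with exact interpolation at finitely many marked points at each stage via Lemma \ref{movingpoints} and summable errors $\varepsilon_j$---matches the paper's, but the way you obtain \emph{properness} of the limit map is genuinely different, and that is exactly where your argument has a gap. You adopt the Forn\ae ss Wold scheme: at stage $j$, push the shell $\varphi_{j-1}(K_j\setminus K_{j-1}^{\circ})$ out of the compact $L_{j-1}\subset X$ by an isotopy which is the identity near $\varphi_{j-1}(K_{j-1})$, and conclude $\varphi^{-1}(L_m)\subset K_m$. The step you do not carry out---and explicitly defer as ``the technical heart''---is the construction of that isotopy with all intermediate images Runge in $X$. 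This is not routine bookkeeping: the moving shell is a compact piece of a complex subvariety of arbitrary dimension, and keeping the union of the stationary part $\varphi_{j-1}(K_{j-1})$ and the displaced shell in a Runge configuration throughout the motion is precisely the obstruction that forces the dimension hypothesis $2\dim S+1\le\dim X$ in the embedding theorem of Andrist--Forstneri\v c--Ritter--Wold \cite{AFRW} (application (3) of Section 4) and the ``nice projection property'' in Wold's work on Riemann surfaces in $\C^2$. Proposition \ref{interpol} carries no dimension restriction, so this step cannot be dismissed as standard; as written, the proof is incomplete at its central point. (A smaller omission: your marked point $p_k$ lies in the shell at stage $n(k)$, so condition (3) forces $e_k\notin L_{n(k)-1}$; the enumeration of $E$ and the choice of $n(k)$ must be coordinated to make this consistent.)

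The paper's proof avoids the issue entirely by never trying to make the image of $S$ escape to infinity: $S$ is already properly embedded, and properness of the limit is \emph{inherited} rather than constructed. One orders $E$ so that $\rho(e_1)<\rho(e_2)<\cdots$ for a strictly plurisubharmonic exhaustion $\rho$ of $X$, and at stage $k$ takes $\alpha_k$ to be $\varepsilon_k$-close to the identity on the Runge set $X_{r_k}\cup S_{r_{k+1}}$ (the only moving piece is a small coordinate ball dragged along a path to catch $e_{k+1}$---a far easier Runge configuration than a travelling shell), to fix $e_1,\dots,e_k$, and to place $e_{k+1}$ on the image of $S$. The estimates $\Vert\alpha_k-\id\Vert_{X_{r_k}}<\varepsilon_k$ with $\sum_k\varepsilon_k<\infty$ and $\varepsilon_k<r_{k+1}-r_k$ force the compositions to converge on a Fatou--Bieberbach domain $\Omega\supset S$ to a biholomorphism $\psi:\Omega\to X$; since $S$ is closed in $\Omega$, the image $\psi(S)$ is automatically closed in $X=\psi(\Omega)$, and $\varphi=\psi|_S$ is the desired proper embedding. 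If you replace your condition (2) and the shell-pushing step by this convergence-on-a-domain argument, the rest of your write-up goes through.
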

\begin{proof} The idea is to construct inductively a sequence of holomorphic automorphisms $\alpha_i \in \Aut_{hol} (X)$ such that the limit $\lim_{n \to \infty}
\alpha_n \circ \cdots \circ \alpha_2 \circ \alpha_1$ converges uniformly on compacts
on an open subset $\Omega$ of $X$ containing $S$	 to a biholomorphic map $\psi : \Omega \to X$ (thus $\Omega$ is a so called Fatou-Bieberbach domain in $X$). Moreover $E \subset\psi (S)$. For describing the inductive procedure we fix a strictly plurisubharmonic (spsh)
exhaustion function $\rho: X \to [0, \infty)$ of the Stein manifold $X$. The existence
of such a function is guaranteed by embedding $X$ as a closed submanifold into some 
affine space $\C^N$ of high enough dimension and restricting the function $\vert z \vert^2 = \vert z_1 \vert^2+\vert z_2 \vert^2 + \ldots + \vert z_N \vert^2$ to $X$.
By moving the origin by an arbitrarily small amount in $\C^N$ if necessary and changing the enumeration of the points in $E$, we can assume that 
$\rho (e_1) < \rho (e_2) < \ldots \rho (e_k) < \rho (e_{k+1}) < \ldots$. Also we choose
numbers $r_k >0$ with $\rho (e_k) < r_k <\rho (e_{k+1})$. Since $\rho$ is spsh its sub-level sets $X_r := \rho^{-1} ([0, r))$ are Runge subsets of $X$. We can
restrict $\rho$ to the closed subvariety $S$ in order to obtain a spsh exhaustion function of $S$. Again the sub level sets $S_R := \rho^{-1} ([0, R) \cap S$ are 
holomorphically convex Runge subsets of $S$.
Moreover 
it can be shown that for each  $R>r$ there is a neighborhood basis $U_i$ of $S_R$ with the property that $X_r \cup U_i$ is a Runge subset in $x$.
Finally choose $\epsilon_k < r_{k+1} - r_k$ and with $\sum_k \epsilon_k < \infty$.

We can assume that $e_1 \subset S$ by the following application of Theorem \ref{AL-Theorem}: Suppose $e_1 \notin S$, choose a point $ s \in S$ and connect the points 
$s$ and $e_1$ by a continuous path $\gamma (t)$. It is an easy exercise  using local charts and cutoff functions that the map $\gamma_t$
can be extended to a neighborhood $W$ of the point  $s$ to  a map
$\Gamma : W \times [0,1] \to X$ with $\Gamma (s, t) = \gamma (t)$ so that the maps $\Gamma_t$ are biholomorphisms of
$W$ onto its image $\Gamma_t (W)$ which is Runge in $X$ and with $\Gamma_0 = id$. For 
$X=\C^n$ a simple translation $\Gamma_t (z) = z + \gamma (t)$ will do the job.
By Theorem \ref{AL-Theorem} we can approximate the final map $\Gamma_1$ by an
automorphism which will move the point $e_1$ arbitrarily close to the point $s \in S$. Using Lemma \ref{movingpoints} we can move $e_1$ by a further automorphism exactly to $s \in S$.

Now we describe the inductive step: 
The automorphisms $\alpha_k$
will satisfy 

$$  \Vert \alpha_k -id \Vert_{X_{r_k} \cup S_{r_{k+1}}} < \epsilon_k $$
$$ \alpha_k (e_i ) = e_i \  \ \ i = 1, 2, \ldots, k$$
$$  \exists s \in S \ : \ \alpha_k \circ \ldots \circ  \alpha_1 (s) = e_{k+1}$$

We assume $\alpha_1, \ldots \alpha_{k-1}$ have been constructed. To construct $\alpha_k$ we proceed as follows: If 
$  e_{k+1} \in \alpha_{k-1} \circ \ldots \circ  \alpha_1 (S)$ we set $\alpha_k = id$, if not we connect  $e_{k+1}$ by a 
continuous path $\gamma (t)$ never intersecting $X_{r_k} \cup S_{r_{k+1}}$ to a point $s$ in $S \setminus S_{r_{k+1}}$.
Now the local data for application of Theorem \ref{AL-Theorem} is identity on $X_{r_k} \cup S_{r_{k+1}}$ and an extension
of the path $\gamma (t)$ to biholomorphic maps $\Gamma_t$ of a small neighborhood $W$ of $s$ as above. If $W$ is small enough the
sets $X_{r_k} \cup S_{r_{k+1}}\cup \Gamma_t (W)$ are Runge and an application of Theorem \ref{AL-Theorem} gives an automorphism
$\alpha$ which satisfies the first of the three conditions above and the second and third condition are satisfied approximately. By composing $\alpha$ with an automorphism from Lemma \ref{movingpoints} (not destroying the first condition, if the points had moved/stayed  nearby enough) we find our desired $\alpha_k$.
The claim that the limit $\lim_{n \to \infty}
\alpha_n \circ \cdots \circ \alpha_2 \circ \alpha_1$ converges uniformly on compacts
on an open subset $\Omega$ of $X$ containing $S$	 to a biholomorphic (Fatou-Bieberbach) map $\psi : \Omega \to X$ follows standardly from $  \Vert \alpha_k -id \Vert_{X_{r_k}}  < \epsilon_k $ and the restrictions on the $\epsilon_k$ (see e.g. \cite{For} Proposition 4.4.1. and Corollary 4.4.2.). Moreover $  \Vert \alpha_k -id \Vert_{S_{r_{k+1}}} < \epsilon_k $ implies $S \subset \Omega$ and since
$S$ is closed in $\Omega$ clearly $\psi(S)$ is closed in $X=\psi (\Omega)$ and the last two properties of $\alpha_k$ ensure that $E\subset \psi (S)$.
\end{proof}

More generally one can think of the possible ways of embedding any Stein manifold $X$ into $\C^n$.

\begin{Def}\label{def-eq-emb}
Two embeddings $\Phi,\Psi\colon X\hookrightarrow\C^n$ are {\it equivalent} if there 
exist automorphisms $\varphi\in\Aut(\C^n)$ and $\psi\in\Aut(X)$ such that 
$\varphi\circ\Phi=\Psi\circ\psi$.
\end{Def}

The best  and quite striking  result in this direction says that there are even holomorphic families of pairwise non-equivalent  holomorphic embeddings.

\begin{Thm}  \cite{Kutzschebauch-Lodin, KutzschebauchBorell}.
  \label{mainembedding} Let $n, l$ be natural numbers with $n\ge l+2$.
There exist, for $k=n-l-1$,  a family of holomorphic embeddings of
$\C^l$ into $\C^n$ parametrized by $\C^k$, such that for
different parameters $w_1\neq w_2\in \C^k$ the embeddings
$\psi_{w_1},\psi_{w_2}:\C^l \hookrightarrow \C^{n}$ are
non-equivalent. Moreover, there are uncountably many non-equivalent holomorphic embeddings of
$\C^{n-1}$ into $\C^n$. 
\end{Thm}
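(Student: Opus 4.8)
The plan is to build the family over $\C^k$ by threading an embedding $\C^l\hookrightarrow\C^n$ through a carefully chosen discrete set whose ``position'' records the parameter $w\in\C^k$, exactly in the spirit of Proposition~\ref{interpol} but now done \emph{in families}. First I would fix the standard embedding $\iota:\C^l\hookrightarrow\C^n$ as $(z_1,\dots,z_l)\mapsto(z_1,\dots,z_l,0,\dots,0)$ and fix a countable discrete subset $E=\{e_1,e_2,\dots\}\subset\C^n$ lying in a coordinate $\C^{l+1}\subset\C^n$ arranged so that it is non-tame in $\C^n$ (using Rosay--Rudin, as in application (2) above). The key new point is to allow $E$ to vary: I would construct a holomorphic map $w\mapsto E_w$ from $\C^k$ to discrete subsets of $\C^n$ (concretely, move each $e_j$ by an amount depending holomorphically on $w$, keeping the set discrete and its asymptotic behavior fixed), in such a way that the ``non-tameness data'' of $E_w$ — e.g. the induced Eisenman/volume-hyperbolicity structure on $\C^n\setminus E_w$ — genuinely depends on $w$ and distinguishes the $E_w$ up to automorphism. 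Since $k=n-l-1$ is precisely the codimension drop, one has $\dim\C^n-\dim\iota(\C^l)=n-l$, leaving a one-parameter family's worth of room \emph{per} generator of $\C^k$; this is the numerical reason the parameter space has exactly this dimension.

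Next I would run the iterative Andersén--Lempert construction of Proposition~\ref{interpol} with holomorphic dependence on $w$. Concretely: at each stage $k$ of the induction one applies Theorem~\ref{AL-Theorem} (the Andersén--Lempert Theorem) together with Lemma~\ref{movingpoints} to produce an automorphism $\alpha_k(w)$ of $\C^n$ moving the current image of $\C^l$ so as to pass through $e_{k+1}(w)$; the point is that all the ingredients — the paths $\gamma_t$, the local extensions $\Gamma_t$, the approximating automorphisms, the point-moving maps from Lemma~\ref{movingpoints} — can be chosen to depend holomorphically on $w\in\C^k$, because Theorem~\ref{AL-Theorem} provides continuous (indeed one can arrange holomorphic-in-parameter, via the parametric version and openness of the Runge condition) families, and Lemma~\ref{movingpoints} is itself already a holomorphic family of automorphisms. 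Keeping the same summable error bounds $\Vert\alpha_k(w)-\mathrm{id}\Vert_{X_{r_k}}<\epsilon_k$ uniformly in $w$ on compacts, the limit $\psi_w=\lim_n\alpha_n(w)\circ\cdots\circ\alpha_1(w)$ exists, is biholomorphic on a Fatou--Bieberbach domain $\Omega_w$ containing $\iota(\C^l)$, and $\varphi_w:=\psi_w\circ\iota:\C^l\hookrightarrow\C^n$ is a proper holomorphic embedding with $E_w\subset\varphi_w(\C^l)$, depending holomorphically on $w$.

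It remains to show non-equivalence: if $\varphi_{w_1}$ and $\varphi_{w_2}$ were equivalent in the sense of Definition~\ref{def-eq-emb}, there would be $\Theta\in\Aut_{hol}(\C^n)$ with $\Theta(\varphi_{w_1}(\C^l))=\varphi_{w_2}(\C^l)$, hence $\Theta$ (up to the automorphism of the source, which does not change the image) carries $E_{w_1}$ into $\varphi_{w_2}(\C^l)\cong\C^l$; combined with the fact that inside $\varphi_{w_2}(\C^l)$ the image of $E_{w_1}$ would be a discrete subset of a copy of $\C^l$ and hence tame there, one derives — via the Eisenman-volume obstruction that is invariant under biholomorphisms of $\C^n$ — a contradiction with the way $E_{w_1}$ and $E_{w_2}$ were chosen to have distinct invariants. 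The final ``uncountably many embeddings of $\C^{n-1}$ into $\C^n$'' is the case $l=n-1$, where $k=0$ and the family is $0$-dimensional; here one instead builds an uncountable, rather than holomorphically parametrized, collection of discrete sets $E_t$ with pairwise distinct invariants and runs the same construction, the cardinality coming from the continuum of choices of non-tame $E_t$.

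\textbf{Main obstacle.} The hard part will be the parametric Andersén--Lempert step: guaranteeing that the automorphisms $\alpha_k(w)$ can be chosen to depend \emph{holomorphically} (not merely continuously) on $w$ while simultaneously respecting the Runge conditions of Theorem~\ref{AL-Theorem} at every parameter and maintaining the uniform error estimates needed for convergence — and, equally delicate, arranging the varying discrete sets $E_w$ and their biholomorphism invariants so that distinctness of invariants is both provable and preserved under arbitrary ambient automorphisms. This is exactly where the cited works \cite{Kutzschebauch-Lodin, KutzschebauchBorell} do the real work.
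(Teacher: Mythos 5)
You have assembled the right toolbox --- non-tame discrete sets in the sense of Rosay--Rudin, Eisenman volume hyperbolicity, the iterated Anders\'en--Lempert scheme of Proposition \ref{interpol}, and a parametric version of it --- and this is indeed the circle of ideas behind \cite{KutzschebauchBorell} and \cite{Kutzschebauch-Lodin} (note that the survey itself only cites the theorem; the only argument it actually carries out is the single-embedding non-straightenability argument of application (2)). But the heart of the theorem is the \emph{pairwise} non-equivalence, and there your argument has a genuine gap. The sketch in application (2) only distinguishes an embedding from the \emph{standard} one: the standard image is a coordinate plane, every discrete subset of a coordinate plane is tame, and $E$ was chosen non-tame --- done. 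To distinguish two non-standard embeddings $\psi_{w_1},\psi_{w_2}$ from \emph{each other} you need an $\Aut_{hol}(\C^n)$-invariant of the embedded submanifold $\psi_w(\C^l)$ itself, and your proposal never produces one. An equivalence $\Theta$ satisfies $\Theta(\psi_{w_1}(\C^l))=\psi_{w_2}(\C^l)$; it carries $E_{w_1}$ to \emph{some} discrete subset of $\psi_{w_2}(\C^l)$, but it need not relate $E_{w_1}$ to $E_{w_2}$ in any way, so the phrase ``the $E_w$ were chosen to have distinct invariants'' does no work. Likewise ``$\Theta(E_{w_1})$ is discrete in $\psi_{w_2}(\C^l)\cong\C^l$ and hence tame there'' yields nothing: tameness is a property relative to the ambient $\C^n$ and its automorphism group, and a discrete subset of a submanifold abstractly biholomorphic to $\C^l$ carries no tameness conclusion in $\C^n$ unless that submanifold is already known to be straightenable --- which is precisely what is being denied. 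This is where the cited papers do the real work: the embeddings must be built so that the images themselves (not merely the auxiliary discrete sets threaded through them) have distinguishable, automorphism-invariant properties of their complements, and verifying that these distinctions survive an arbitrary ambient automorphism is the main point.

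Two secondary issues. First, you present the ``moreover'' clause as the case $l=n-1$, $k=0$ of the same construction, but that case is excluded by the hypothesis $n\ge l+2$ of the first part: the codimension drops to one, and codimension at least two is what lets one keep complements connected and run the Rosay--Rudin-type avoidance arguments, so the hypersurface case needs a separate treatment that you do not supply. Second, the ``numerical reason'' you offer for $k=n-l-1$ is a degrees-of-freedom heuristic, not an argument; the dimension of the parameter space falls out of the actual construction in \cite{Kutzschebauch-Lodin} rather than from a codimension count.
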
 

We will see a beautiful    application of Theorem \ref{mainembedding} to the holomorphic linearization problem in the last section.

It is clear that  in Definition \ref{def-eq-emb} the ambient space $\C^n$ can be replaced by any other manifold, most
interesting by a Stein manifold with DP, since they are  all targets for embedding of Stein manifolds, exactly as affine spaces are, see the next application.

\medskip\noindent
\textbf{Open Problem:}   \label{equiv} Suppose $X$ is a Stein manifold with density property and $Y \subset X$ is a closed submanifold. Is there always another proper holomorphic embedding $\varphi : Y \hookrightarrow X$ which is not equivalent to
the inclusion $ i: Y \hookrightarrow X$?
\smallskip

We should remark that an affirmative answer  to this problem is stated in \cite{Varolin2}, but the author apparently had another (weaker) notion of equivalence in mind.

\smallskip\noindent
{\bf (3)}
As mentioned above not only affine spaces $\C^n$ are the universal targets for embedding Stein manifolds. All Stein manifolds with DP are such targets as well.

\begin{Thm} \cite{AFRW}
 Let $X$ be a Stein manifold satisfying the density property. If $S$ is a Stein manifold and $2 \dim S + 1 \le  \dim X$, then any continuous map $f : S \to  X$ is homotopic to a proper holomorphic embedding $F : S ֒\to X$. If in addition $K$ is a compact $\cO (S)$-convex set in $S$ such that $f$ is holomorphic on a neighborhood of $K$, and $S^\prime$ is a closed complex subvariety of $S$ such that the restricted
map $f : S^\prime  ֒\to  X$ is a proper holomorphic embedding of $S^\prime $ to $X$, then $F$ can be chosen to agree with $f$ on $S^\prime $ and to approximate $f$ uniformly on $K$ as closely as desired.
 \end{Thm}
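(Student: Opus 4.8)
The plan is to reduce the theorem to an iterated application of the Anders\'en--Lempert Theorem \ref{AL-Theorem}, following the same scheme that was used to prove Proposition \ref{interpol}, but now carrying along the approximation and interpolation data and the dimension bound $2\dim S+1\le\dim X$ which is exactly what lets us be generic. First I would set up an exhaustion: fix a strictly plurisubharmonic exhaustion $\rho\colon X\to[0,\infty)$ obtained by embedding $X$ as a closed submanifold of some $\C^N$ and restricting $|z|^2$, and pull it back (after a generic small translation of the origin, as in the proof of Proposition \ref{interpol}) to a strictly plurisubharmonic exhaustion of $S$ whose sublevel sets $S_r$ are $\cO(S)$-convex and Runge in $S$. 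One arranges $K\subset S_{r_0}$ and $S'$ to be handled from the start. The idea is then to build a sequence of holomorphic automorphisms $\alpha_k\in\Aut_{hol}(X)$ together with a sequence of holomorphic embeddings $f_k\colon S\to X$, where $f_0$ is a holomorphic approximation of $f$ on a neighborhood of $S_{r_0}$ (obtained from the continuous $f$ using the given holomorphy near $K$ and near $S'$, together with ordinary Stein approximation/extension for sections of the tangent bundle along $S'$), and $f_{k+1}=\alpha_{k+1}\circ f_k$ is a small correction on the larger piece $S_{r_{k+1}}$, so that $F=\lim_k f_k$ exists on all of $S$, is a proper holomorphic embedding, agrees with $f$ on $S'$, and approximates $f$ on $K$.

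The heart of the construction is the inductive step, where one already has a holomorphic embedding $f_k$ that is "good" on $S_{r_k}$ and merely continuous (or only defined after a homotopy) further out, and one must push the boundary out to $S_{r_{k+1}}$ by an automorphism of $X$ that is close to the identity on the already-fixed compact $\overline{X_{r_k}}$. Here I would invoke Theorem \ref{AL-Theorem}: one produces a $C^1$ isotopy $\Phi_t$ of $f_k|_{\overline{S_{r_{k+1}}}}$ that is the identity near the already-controlled part, stays injective and Runge at each time (this uses that sublevel sets of a strictly plurisubharmonic exhaustion are Runge, and that a small neighborhood of an $\cO(S)$-convex, hence Runge, compact piece can be chosen Runge in $X$ — exactly the neighborhood-basis statement quoted in the proof of Proposition \ref{interpol}), and whose time-$1$ map is the desired extension; then $\alpha_{k+1}$ is an automorphism of $X$ approximating $\Phi_1$ on the relevant compact set to within $\epsilon_{k+1}$, with $\sum_k\epsilon_k<\infty$ and $\epsilon_{k+1}<r_{k+2}-r_{k+1}$ chosen as in Proposition \ref{interpol}. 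The role of the dimension hypothesis $2\dim S+1\le\dim X$ is to guarantee, by a general-position / transversality argument, that the maps stay embeddings (injective immersions) under the homotopies and that one can always extend a partially-defined holomorphic embedding over one more sublevel set: self-intersections of $S$ have expected dimension $2\dim S-\dim X\le -1$, so they can be removed, and likewise the continuous map $f$ can first be deformed (rel $S'$ and rel a neighborhood of $K$) to a continuous embedding, which one then needs only to make holomorphic step by step.

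The interpolation on $S'$ and the approximation on $K$ are maintained throughout: since $\alpha_{k+1}$ is close to the identity on $\overline{X_{r_k}}\supset f_k(S_{r_k})$ and is chosen (using Lemma \ref{movingpoints}, or rather a relative version of the Anders\'en--Lempert construction holding a subvariety fixed) to fix $f_k(S')$ — more precisely one uses the relative density property / relative version of Theorem \ref{AL-Theorem} for automorphisms fixing the submanifold $F(S')$, which is the variant of DP alluded to at the start of Section 4 — the composition $F=\lim\alpha_k\circ\cdots\circ\alpha_1\circ f_0$ still restricts to the given embedding on $S'$ and still $\epsilon$-approximates $f$ on $K$. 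Properness of $F$ follows from the standard estimate $\rho(F(s))\ge r_k-\sum_{j\ge k}\epsilon_j$ for $s\notin S_{r_k}$, exactly as in Proposition \ref{interpol} and in \cite{For}, Proposition 4.4.1 and Corollary 4.4.2. The main obstacle is the inductive step: arranging simultaneously (i) the general-position removal of double points of $S$ in a way compatible with holomorphy, (ii) the Runge property of all intermediate domains so that Theorem \ref{AL-Theorem} applies, and (iii) the relative constraint that $S'$ stay pointwise fixed. Each ingredient is known — the dimension bound handles (i), the strictly plurisubharmonic exhaustion handles (ii), and the relative form of the density property handles (iii) — but fitting them into a single isotopy $\Phi_t$ satisfying hypotheses (1)--(3) of Theorem \ref{AL-Theorem} at every time is where the real work lies.
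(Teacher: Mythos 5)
First, a point of comparison: the survey does not prove this theorem at all --- it is quoted from \cite{AFRW} as item (3) in the list of applications --- so there is no in-paper proof to measure your outline against. Judged on its own, your proposal assembles the right ingredients (exhaustions, induction, Theorem \ref{AL-Theorem}, general position from $2\dim S+1\le\dim X$, a relative version for fixing $S'$), but two of its load-bearing steps would fail as written.

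The first is the inductive scheme $f_{k+1}=\alpha_{k+1}\circ f_k$ with $\alpha_{k+1}\in\Aut_{hol}(X)$. Post-composition with an automorphism of the target cannot enlarge the domain of definition of $f_k$: if $f_k$ is holomorphic only on a neighborhood of $S_{r_k}$, so is $\alpha_{k+1}\circ f_k$. Extending the map over the next sublevel set is a problem on the source $S$, and it requires a Mergelyan/Oka--Weil approximation step for maps from $S$ into $X$ --- available exactly because a Stein manifold with DP is an Oka--Forstneri\v c manifold, and this is also what produces the initial holomorphic $f_0$ with interpolation on $S'$ and approximation on $K$ --- followed by a general-position perturbation restoring injectivity. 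Your appeal to ``Stein approximation/extension for sections of the tangent bundle'' points at the wrong tool: maps into $X$ are not sections of a vector bundle, and the Oka property of $X$ is the ingredient that must be invoked at the start and at every induction step. In \cite{AFRW} the domain-extension step and the automorphism step are kept separate, and the automorphisms serve a different purpose.

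That purpose is properness, which is the second gap. Your estimate $\rho(F(s))\ge r_k-\sum_{j\ge k}\epsilon_j$ for $s\notin S_{r_k}$ presupposes that the images $f_k(\overline{S_{r_{k+1}}}\setminus S_{r_k})$ have already been pushed outside an exhausting sequence of compacts of $X$; that is the content of the induction, not a consequence of it. The shortcut of ``pulling back $\rho$ to an exhaustion of $S$'' is circular, since the pullback of a plurisubharmonic exhaustion under a holomorphic map is an exhaustion only when the map is proper --- which is the conclusion being sought. (In Proposition \ref{interpol} the analogous step is free because there $S$ is already a closed subvariety of $X$ and $\rho|_S$ is automatically an exhaustion; that is precisely where the two situations diverge.) So the automorphisms from Theorem \ref{AL-Theorem} must be used to move $f_k(\overline{S_{r_{k+1}}}\setminus S_{r_k})$ out of a prescribed compact $L_k\subset X$ while staying $\epsilon_k$-close to the identity near $f_k(\overline{S_{r_{k-1}}})$ and fixing $f(S')$; building an isotopy $\Phi_t$ with injective, Runge images that achieves this simultaneously with (i)--(iii) of your last paragraph is the real work of \cite{AFRW}, and the outline only gestures at it.
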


This theorem which underlines the intrinsic importance of
Stein manifolds with DP could play a future role for 
results in the direction of 
L\'arussons question, which is motivated by
his homotopy theoretic view point of Oka theory, for details we refer to \cite{Larusson}.

\medskip\noindent
\textbf{Open Problem:} Does every Stein manifold admit an acyclic proper holomorphic embedding into a Stein Oka--Forstneri\v c manifold?

For the connection to the density property remember that Stein manifolds with DP are Oka--Forstneri\v c manifolds.

\smallskip\noindent
{\bf (4)}
The already mentioned infinite transitivity of the action of 
$\Aut_{hol} (X)$ on a Stein manifold 
with density property can be strengthened to a parametric version. This is a recent result of Ramos Peon and the author.
For an interpretation of it as an Oka 
principle in the Grauert style but for bundles with infinite
dimensional Fr\' echet groups as fibers  (instead of Lie groups)
we refer the interested reader to the last section of their 
paper \cite{Kutzschebauch-RamosPeon}.

\begin{Thm}
Let $W$ be a Stein manifold and $X$ a Stein manifold with the density property. \\
Let $x:W\to X^N\setminus \{(z^{1},\dots,z^{N}) \in X^N  : z^{i} = z^{j} \text{ for some } i  \ne j \}$ \\
be a holomorphic map. Then the parametrized points $x^{1} (w) ,\dots,x^{N}(w)$ are simultaneously standardizable by an automorphism 
lying in the path-connected component of the identity $(\Aut_W(X))^0$ of $\Aut_W(X)$ if and only if $x$ is null-homotopic.\\
\end{Thm}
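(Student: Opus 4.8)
The plan is to split the equivalence into the trivial implication and the substantial one, treating the latter as a ``parametric Oka plus parametric Andersén--Lempert'' argument. First I would fix notation: write $x=(x^1,\dots,x^N)$, let $\Delta\subset X^N$ be the fat diagonal and $\mathcal C:=X^N\setminus\Delta$. To say that the family is simultaneously standardizable by $\Phi\in\Aut_W(X)$ means there is a constant $N$-tuple $q=(q_1,\dots,q_N)$ of distinct points of $X$ with $\Phi_w(x^i(w))=q_i$ for all $w\in W$ and all $i$; since by Lemma~\ref{movingpoints} the automorphisms built from flows of complete fields act transitively on $\mathcal C$ and lie in the identity component, any two constant configurations are carried one onto the other by an element of $(\Aut_W(X))^0$, so the particular $q$ is immaterial and I fix one. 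For the easy direction, given $\Phi\in(\Aut_W(X))^0$ standardizing the family and a path $\gamma\colon[0,1]\to(\Aut_W(X))^0$ from $\id$ to $\Phi$, the map $(t,w)\mapsto(\gamma(t)_w(x^1(w)),\dots,\gamma(t)_w(x^N(w)))$ is continuous, takes values in $\mathcal C$ because automorphisms preserve distinctness of points, equals $x$ at $t=0$ and the constant map $w\mapsto q$ at $t=1$; hence $x$ is null-homotopic. This also shows that the restriction to the identity component is exactly what makes the statement correct.

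For the hard direction I would argue in two stages. Stage one is a \emph{parametric Oka} step: because $X$ has the density property it is an Oka--Forstneri\v c manifold, so $X^N$ is Oka and, removing the codimension $\ge 2$ diagonal (complements of such subvarieties in Oka manifolds of this kind are again Oka), $\mathcal C$ is Oka; the parametric Oka principle then lets me replace the given continuous null-homotopy of $x$ by a homotopy $x_t\colon W\to\mathcal C$ which is holomorphic in $w$ for each fixed $t$, continuous in $t$, with $x_0=x$ and $x_1\equiv q$. Stage two ``integrates'' this homotopy by fibered automorphisms: I choose a partition $0=t_0<\dots<t_m=1$ fine enough that on each subinterval $x_{t_j}$ and $x_{t_{j+1}}$ are joined by a short sub-homotopy in $\mathcal C$, construct $\Psi^{(j)}\in(\Aut_W(X))^0$ close to $\id$ with $\Psi^{(j)}_w(x^i_{t_j}(w))=x^i_{t_{j+1}}(w)$ for all $i,w$, and set $\Phi:=\Psi^{(m-1)}\circ\cdots\circ\Psi^{(0)}$; then $\Phi\in(\Aut_W(X))^0$ and $\Phi_w(x^i(w))=q_i$, as required.

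The key technical point, and the step I expect to be the main obstacle, is the construction of the $\Psi^{(j)}$: a genuinely parametric (fibered, Stein-parametrized) version of Lemma~\ref{movingpoints}. The approach is to work in the trivial bundle $W\times X\to W$, take disjoint holomorphic tubular neighborhoods of the $N$ complex graphs $\{(w,x^i_{t_j}(w))\}$, and use the short sub-homotopy --- after smoothing in the time variable and spreading it off the graphs by cutoff functions --- to produce a $C^1$-family of fibered injective holomorphic maps from a neighborhood of the union of graphs, Runge over $W$, that pushes the $i$-th graph towards $\{(w,x^i_{t_{j+1}}(w))\}$; this is exactly the input needed for the parametric form of the Andersén--Lempert theorem (Theorem~\ref{AL-Theorem} with a Stein parameter space, which itself rests on the density property of $X$). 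It yields a fibered automorphism approximating this movement, hence sending $x^i_{t_j}$ close to $x^i_{t_{j+1}}$, and a final correction by a fibered automorphism close to $\id$ --- the parametric analogue of the ``move finitely many points exactly'' part of Lemma~\ref{movingpoints} --- upgrades approximate equality to equality. Staying close to $\id$ keeps each automorphism in $(\Aut_W(X))^0$. The points requiring real care are: establishing the Stein-parametrized Andersén--Lempert theorem in precisely the form needed; converting the merely continuous time-dependence of the homotopy into admissible $C^1$ local data; and carrying out the exact-correction step with parameters. The null-homotopy hypothesis enters exactly once, in the parametric Oka step, and by the easy direction it cannot be omitted.
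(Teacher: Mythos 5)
Your overall architecture is the right one, and it matches the only indication the paper gives about the proof: this theorem is quoted from \cite{Kutzschebauch-RamosPeon} without proof, with the single remark that the argument ``uses extensively DP via Theorem \ref{AL-Theorem} and the Oka principle''. Your easy direction is complete and correct, and your two-stage plan for the converse (a parametric Oka step to make the null-homotopy holomorphic in $w$, then a subdivision-and-correction scheme using a Stein-parametrized Anders\'en--Lempert theorem together with a parametric version of Lemma \ref{movingpoints}) is essentially the strategy of the cited paper.

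There is, however, one genuine gap in Stage one. You justify the Oka property of the configuration space $\mathcal{C}=X^N\setminus\Delta$ by asserting that complements of codimension $\ge 2$ subvarieties in ``Oka manifolds of this kind'' are again Oka. That is not a theorem: whether removing a closed subvariety of codimension $\ge 2$ from an Oka manifold preserves the Oka property is a well-known open problem, and the algebraic removal result of Flenner--Kaliman--Zaidenberg \cite{FKZ} quoted in the paper concerns flexible quasi-affine varieties and algebraic subvarieties, which is not the situation here ($X$ need not be algebraic, and the relevant structure on $X^N$ is only holomorphic). The correct repair is to prove ellipticity of $\mathcal{C}$ directly: by the density property there are finitely many complete holomorphic vector fields on $X$ whose diagonal lifts to $X^N$ preserve $\mathcal{C}$ and span $T_p(X^N)$ at every $p\in\mathcal{C}$ (this is exactly the content of the proof of Lemma \ref{movingpoints}, packaged as in Example \ref{Gromovspray}); the composition of their flows is a dominating spray on $\mathcal{C}$, so $\mathcal{C}$ is elliptic and hence Oka--Forstneri\v{c}. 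With that substitution Stage one is sound. In Stage two you have correctly isolated the two hard technical points --- the Anders\'en--Lempert theorem with a Stein parameter space and the exact parametric correction of an approximate standardization --- but you only name them rather than establish them; that is where the bulk of the work in \cite{Kutzschebauch-RamosPeon} actually lies, so as written your argument is a correct blueprint rather than a complete proof.
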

\noindent
Here simultaneously standardizable means that given any fixed positions $\tilde x_1, \tilde x_2,$ $ \ldots , \tilde x_N \in X$ there are holomorphic automorphisms $\alpha$ of $X$ depending holomorphically on $w$, i.e., an element of $\Aut_{W}(X)=\{ \alpha\in\Aut(W\times X); \alpha(w,z)=(w,\alpha^w(z))\},$
with 
$\alpha^w(x^j(w))=\tilde x_j$
for all $w\in W$ and $j=1,\dots,N$.

The proof of this result uses extensively DP via Theorem \ref{AL-Theorem} and the 
Oka principle.
 
\smallskip\noindent
{\bf (5)}
All Fatou--Bieberbach domains arising as basins of attraction 
or more generally as domains of convergence of sequences of automorphisms of $\C^n$ are always Runge domains.
Thus it is natural to ask whether all Fatou--Bieberbach domains in $\C^n$ have to be Runge.
This problem was solved by {Forn\ae ss Wold} who constructed a Fatou--Bieberbach domain in $\C \times \C^*$ which is not Runge in $\C^2$
(but Runge in $\C \times \C^*$) using the density property of $\C \times \C^*$ \cite{W1}.

\smallskip\noindent
{\bf (6)}
One of the questions coming from complex dynamical systems is the description of the boundaries of Fatou--Bieberbach domains.
Say, a surprising result of {Stens{\o}nes} \cite{S}) provides such a domain in $\C^2$ with a smooth boundary which has, therefore, Hausdorff dimension $d=3$.
Furthermore, it was established by methods of complex dynamical systems that such a dimension can take any value $3 \leq d <4$.
However the question about a Fatou--Bieberbach domain in $\C^2$ with a boundary of Hausdorff dimension $d=4$ remained open until
{Peters} and {Forn\ae ss Wold} \cite{PW} managed to construct it using  the DP.

\smallskip\noindent
{\bf (7)}
 A question  posed by  {Siu} asks whether there exists always a Fatou--Bieberbach domain contained in the complement
to a closed algebraic subvariety $Z$ of $\C^n$ such that $\dim Z \leq n-2$.
The affirmative answer was obtained by {Buzzard} and {Hubbard} \cite{BH} who used some concrete construction.
Another proof of this fact was given by Kaliman and the author who used a version of the density property for such complements.
More precisely for any point $x \in \C^n\setminus Z$ there is a Fatou--Bieberbach (i.e. holomorphic injective) map
 $ f: \C^n \to \C^n\setminus Z$ with $f(0) =x$ (actually,  the existence of complete holomorphic vector fields on such complements   
 $\C^m\setminus Z$ has been observed in earlier papers of {Gromov} \cite{GromovOPHSEB} and {Winkelmann} \cite{Win}).

In particular all Eisenman measures on $\C^n \setminus Z$ are trivial.
It is worth mentioning that closed {\bf analytic} subsets of $\C^n$ of codimension $k$ may have $k$-Eisenman
hyperbolic complements. More precisely,  it was shown in \cite{BK} that
if a complex manifold $Y$ admits a proper holomorphic embedding into $\C^n$ then it has also another
proper holomorphic embedding with $(n- \dim Y)$ -Eisenman hyperbolic complement to the image (the proof is based on the  DP
and a generalized idea from \cite{BFo}).

\smallskip\noindent
{\bf (8)}
A beautiful combination of differential-topological methods with hard analysis (solutions of $\bar\partial$-equations with exact estimates) and the
Anders\'en-Lempert-Theorem is required for understanding of
how many totally real differentiable  embeddings of a real manifold $M$ into $\C^n$ can exist.

If $f_0,f_1\colon M\to{\bf C}^n$ are two totally real, polynomially convex real-analytic embeddings of a compact manifold
$M$ into ${\C}^n$, we say that $f_0$ and $f_1$ are $\Aut_{\rm hol} (\C^n)$-equivalent\footnote{It is unfortunate that
in the literature the term   ``$\Aut_{\rm hol} (\C^n)$-equivalence" is used in different meanings - see the sentence after the Open Problem in application (2).} if $f_1=F\circ f_0$, where
$F\colon U\to F(U)\subset{\bf C}^n$ is a biholomorphism defined in a neighborhood $U$ of $f_0(M)$ such that $F$ is the uniform limit in $U$
of a sequence of elements of $\Aut_{\rm hol}  (\C^n)$. Conditions for $\Aut_{\rm hol}  (\C^n)$-equivalence were found in \cite{FR},
using volume-preserving automorphisms  (and an approach using automorphisms preserving the holomorphic symplectic form
was considered in \cite{FSymp}).

In the smooth case let  ${\Eeul}^r(M,{\bf C}^n)$ be the set of all totally real polynomially convex $C^r$-embeddings of $M$ into ${\C}^n$
(for $2\le r\le\infty$). It is proved  by   {Forstneri\v c} and   {L{\o}w} that two embeddings $f_0, f_1\in{\Eeul}^\infty(M, {\C}^n)$ belong
to the same connected component (in the space of $C^r$-embeddings of $M$ into $\C^n$ equipped with the usual topology
of uniform convergence of all derivatives up to order $r$) if and only if there exists a sequence $\{\Phi_j\}\subset{\Aut_{\rm hol} }({\C}^n)$ such that
$\Phi_j\circ f_0\to f_1$ and $\Phi^{-1}_j\circ f_1\to f_0$ in $C^\infty(M)$ as $j\to \infty$. Precise results in the case $r<\infty$ were obtained in  \cite{FLO}.

\smallskip\noindent
{\bf (9)}
Recall that a ``long $\C^n$" is a complex manifold $X$
that can be exhausted by open subsets $\Omega_i$ which are all biholomorphic to $\C^n$,
i.e.  $X = \bigcup_{i=1}^\infty \Omega_i$, $\Omega_i \subset \Omega_{i+1} $, and $\Omega_i \cong \C^n$  for all $ i \in \N$.
(Here, of course,
$\Omega_i \subset \Omega_{i+1}$ is not a Runge pair.) The fist examples of such manifolds not biholomorphic
to $\C^n$  for $n \geq 3$ were constructed by Forn\ae ss  \cite{Forn} in 1976. The case of $n=2$ had been
resistant until recently when, developing further the ideas from application (5),
  {Forn\ae ss Wold} \cite{W0} constructed a ``long $\C^2$" which is not
biholomorphic to $\C^2$. Every known ``long $\C^n$" (including the case of $n=2$) is non-Stein. Recently families of non-isomorphic 
long $\C^n$'s have been constructed using DP via Theorem \ref{AL-Theorem} by Forstneri\v c and Boc Thaler \cite{FoBoc}.

\smallskip\noindent
{\bf (10)}
The classical approximation theorem of   {Carleman} states that
for each continuous function $\lambda : \R \to \C$ and a positive continuous function
$\epsilon : \R \to (0, \infty)$ there exists an entire function $f$ on $\C$ such that $\vert f(t) - \lambda (t)\vert < \epsilon (t)$ for every $t\in \R$.

Using the   {Anders\'en-Lempert} theorem together with some explicit shears 
  {Buzzard} and   {Forstneri\v c} \cite{BF} were able to prove a similar result for holomorphic embeddings into $\C^n$.
Namely, for any proper embedding $\lambda : \R \to \C^n$ of class $C^r$ (where $n\geq 2$ and $r\ge 0$)
and a positive continuous function $\epsilon : \R \to (0, \infty)$ there exists a proper holomorphic embedding
$f: \C \to \C^n$ such that $$ \vert f^{(s)} (t) - \lambda^{(s)} (t)\vert < \epsilon (t) \quad \forall \ t\in \R, \ 0\le s\le r.$$

Actually this fact remains valid under the additional requirement that the embedding satisfies the interpolation property as in  Proposition \ref{interpol}.

\smallskip\noindent
{\bf (11)} The \emph{spectral ball} of dimension $n \in \N$ is defined to be
\[
\Omega_n := \{ A \in \mat{n}{n}{\C} \;:\; \rho(A) < 1 \}
\]
where $\rho$ denotes the spectral radius, i.e.\ the modulus of the largest eigenvalue.

The study of the group of holomorphic automorphisms of the spectral ball started with the work of Ransford and White \cite{RanWhi1991} in 1991 and was continued by various authors. A conjecture from \cite{RanWhi1991} was disproved by Kosi{\'n}ski \cite{Kosinski2013} for $n=2$ and moreover he described a dense subgroup of the $2 \times 2$ spectral ball $\Omega_2$. Andrist and the author  generalized these  results to $\Omega_n$ for $n \geq 2$ with an new approach using the fibered density property. The notion of $\Sl_n(\C)$-shears and -overshears is easy to understand. One uses a one-parameter subgroup of $\Sl_n (\C)$ which by conjugation acts on $\Omega_n$ and thus gives rise to a complete vector field $\theta$ on $\Omega_n$. Multiplying $\theta$
 with a function $f$ in the kernel gives a shear field $f \theta$ and with a function $f$  in the second kernel ($\theta (f) \in \ker f \setminus \{0\}$) we get an overshear field $f \theta$. 
 Shears and overshears are time $t$-maps of the corresponding fields.
 This is the same way as  shears and overshears in $\C^n$ from formulas \ref{shear} and \ref{overshear} arise from the complete fields $\partial \over {\partial z_i}$. 

\begin{Thm} \cite{AndKu}
The $\Sl_n(\C)$-shears and the $\Sl_n(\C)$-overshears together with matrix transposition and matrix valued M\"obius transformations generate a dense subgroup (in compact-open topology) of the holomorphic automorphism group $\Aut{\Omega_n}$.
\end{Thm}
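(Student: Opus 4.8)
The plan is to mimic the original Anders\'en--Lempert argument in the fibered setting, using the map $\pi:\Omega_n\to\mat{n}{n}{\C}\cat\Sl_n(\C)$ sending a matrix to the coefficients of its characteristic polynomial (equivalently, to its conjugacy-invariant data). The fibers of $\pi$ are the $\Sl_n(\C)$-conjugacy classes of matrices with fixed spectrum contained in the unit disc, and the matrix-valued M\"obius transformations together with transposition act on the base, so the whole picture is genuinely a fibered one over the base $\udisk^n$ (or rather its symmetric-function image). The first step is therefore to establish a \emph{fibered density property}: the Lie algebra generated by the $\Sl_n(\C)$-shear and -overshear fields, which are precisely the $\pi$-vertical complete fields of the form $f\theta$ with $\theta$ coming from a one-parameter subgroup of $\Sl_n(\C)$ acting by conjugation and $f$ in the appropriate kernel, should be dense in the space of $\pi$-vertical holomorphic vector fields on $\Omega_n$. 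This is where the criterion of Lemma \ref{compatible} and Proposition \ref{moving} (in their relative/fibered incarnation) come in: one produces compatible pairs $(\nu_i,\mu_i)$ of vertical complete fields whose product-of-kernels module contains a large ideal, and checks that at a generic point of a generic fiber the vectors $\mu_i$ generate the vertical tangent space under the action of the vertical stabilizer. Concretely, a one-parameter subgroup $g_t=\exp(tE_{jk})$ of $\Sl_n(\C)$ conjugates $A$ to $g_tAg_t^{-1}$, so $\theta_{jk}(A)=[E_{jk},A]$ is a complete vertical field; functions in $\ker\theta_{jk}$ are generated by the entries of $A$ that are $\mathrm{ad}(E_{jk})$-invariant, and one shows these kernels multiply out to a cofinite ideal, giving compatible pairs. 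Because conjugation acts transitively on each semisimple fiber, the moving-lemma machinery of Proposition \ref{moving} upgrades this to vertical density over the generic stratum, and a separate small-dimensional argument handles the non-generic (non-diagonalizable) strata, which form a subvariety of the base.

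The second step is to deduce a \emph{fibered Anders\'en--Lempert theorem}: a smooth isotopy $\Phi_t$ of $\Omega_n$ by holomorphic injections that covers a prescribed isotopy of the base, sends $\Omega_n$ onto Runge (in the fibered sense) open subsets, and starts at the identity, can be approximated on compacts by a path of honest automorphisms built as finite compositions of flows of the fibered-complete fields --- i.e.\ of $\Sl_n(\C)$-shears, $\Sl_n(\C)$-overshears, matrix-M\"obius maps, and transposition. This is the content of the fibered density property plus the standard Euler-characteristic/flow-decomposition bookkeeping; granting the vertical density from Step 1 together with the fact that M\"obius maps and transposition already generate enough automorphisms of the base $\udisk^n$ (the automorphism group of the symmetrized polydisc is classically understood), one runs the usual telescoping argument exactly as in Theorem \ref{AL-Theorem}.

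The third and final step is to observe that every element of $\Aut(\Omega_n)$ can be connected to the identity by a smooth path of automorphisms --- here one uses that $\Omega_n$ is, like $\C^n$, contractible and that the structure of $\Aut(\Omega_n)$ modulo its identity component is controlled by the (small) symmetry group of the base, so it suffices to treat the identity component --- and then apply the fibered Anders\'en--Lempert theorem to that path, with $\Omega=\Omega_n$ and $\Phi_t$ the given path, to approximate the endpoint automorphism uniformly on an arbitrary compact set by a finite composition of the generating maps. I expect the main obstacle to be Step 1, specifically verifying the compatible-pair conditions and the generating-set condition \emph{uniformly across fibers}, including over the non-semisimple locus where conjugation no longer acts transitively on the fiber and the vertical tangent spaces jump in dimension; handling that stratified behavior, and ensuring the Runge property survives along the fibers (the ``fibered Runge'' condition, which requires that sublevel sets of a suitable plurisubharmonic exhaustion of $\Omega_n$ compatible with $\pi$ be Runge), is the delicate technical heart of the argument.
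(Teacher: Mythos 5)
A preliminary caveat: this survey does not actually prove the theorem --- it is quoted from \cite{AndKu}, and the surrounding text only indicates that the proof there uses ``a new approach using the fibered density property''. Your outline is consistent with that indication: the fibered density property for the projection $\pi\colon\Omega_n\to$ (symmetrized polydisc), given by the coefficients of the characteristic polynomial, a fibered Anders\'en--Lempert theorem for isotopies covering a prescribed isotopy of the base, and the identification of the vertical complete fields with the $\Sl_n(\C)$-shear and -overshear fields are indeed the ingredients of the cited proof; you are also right that the non-cyclic (derogatory) fibers, where conjugation is not transitive and the vertical tangent spaces jump, are the technical heart of the vertical density step.

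There is, however, a genuine gap in your Step 3. The fibered Anders\'en--Lempert theorem only applies to isotopies that \emph{cover} an isotopy of the base, so before invoking it you must know that an arbitrary $\varphi\in\Aut(\Omega_n)$ respects the fibration of $\pi$ at all --- that is, that $\varphi$ descends to an automorphism of the symmetrized polydisc and that this induced automorphism comes from a single M\"obius transformation of the spectrum. This is the structure theorem of Ransford and White \cite{RanWhi1991} (together with the description of the automorphisms of the symmetrized polydisc), and it is precisely the input that lets one compose $\varphi$ with a matrix-valued M\"obius transformation as in \eqref{eqmoebius}, and possibly with transposition, so as to reduce to a fiber-preserving automorphism to which the fibered machinery applies. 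Contractibility of $\Omega_n$ and a count of connected components of $\Aut(\Omega_n)$ do not yield this: a priori an automorphism could fail to respect the conjugacy-class fibration entirely, and no amount of vertical density would then approximate it by compositions of the listed generators, all of which do respect the fibration up to a M\"obius map of the base. So the reduction step needs the Ransford--White theorem stated and used explicitly, in place of the soft topological argument you sketch.
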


For the convenience of the reader we recall that a matrix valued M\"obius transformation is a map of the form
$h \colon \Omega_n \to \Omega_n$
\begin{equation}
\label{eqmoebius}
A \mapsto \gamma \cdot (A - \alpha \cdot \id) \cdot (\id - \overline{\alpha} A )^{-1}, \quad \alpha \in \udisk, \gamma \in \boundary \udisk
\end{equation}

\smallskip\noindent
{\bf (12)} It is standard material in a Linear Algebra course that the group $\mbox{SL}_m(\mathbb{C})$ is
generated by elementary matrices $E+ \alpha e_{ij}, \ i\ne j$, i.e., matrices with 1's on the diagonal and all entries
outside the diagonal are zero, except one entry.  Equivalently, every matrix $A \in \mbox{SL}_m(\mathbb{C})$
can be written as a finite product of upper and lower diagonal unipotent matrices (in interchanging order). The same question for matrices in $\mbox{SL}_m(R)$ where $R$ is a commutative ring 
instead of the field $\mathbb{C}$ is much more delicate. For example, if $R$ is the ring of
complex valued functions (continuous, smooth, algebraic or holomorphic)  from a space $X$ the problem amounts to finding for a given map $f : X \to \mbox{SL}_m(\mathbb{C})$ a factorization as a product of upper and lower diagonal unipotent matrices
\begin{equation*}f(x) = \left(\begin{matrix} 1 & 0 \cr G_1(x) & 1 \cr \end{matrix} \right)   
\left(\begin{matrix} 1 & G_2(x) \cr 0 & 1 \cr \end{matrix} \right)  \ldots \left(\begin{matrix} 1 & G_N(x)\cr 0 & 1 \cr \end{matrix} \right) 
\end{equation*}
where the $G_i$ are maps $G_i : X \to \mathbb{C}^{m(m-1)/2}$.

Since any product of (upper and lower diagonal) unipotent matrices is homotopic to a constant map (multiplying each entry outside the diagonals by $ t \in [0, 1]$ we get a homotopy to the identity matrix), one has to assume that the given map $f : X \to \mbox{SL}_m(\mathbb{C})$ is homotopic to a constant map or as we will say {\it null-homotopic}. In particular this assumption holds if the space $X$ is contractible.

This very general problem has been studied in the case of polynomials of $n$ variables. For $n=1$, i.e.,
$f : \mathbb{C} \to \mbox{SL}_m(\mathbb{C})$ a polynomial map (the ring $R$ equals $\mathbb{C}[z]$) it is an easy consequence of the fact that $\mathbb{C}[z]$ is an Euclidean ring  that such $f$ factors through a product of upper and lower diagonal unipotent matrices. For $m=n=2$ the following
counterexample was found by   {Cohn} \cite{CohnSGL2R}: the matrix
 $$\left(\begin{matrix} 1-z_1z_2 &  z_1^2\\ -z_2^2 & 1+z_1z_2 \end{matrix}\right)\in \mbox{SL}_2(\mathbb{C}[z_1,z_2]) $$ does not decompose as a finite product of unipotent matrices.

For $m\ge 3$ (and any $n$) it is a deep result of   {Suslin} \cite{SuslinSSLGRP} that any matrix in $\mbox{SL}_m(\mathbb{C}[\mathbb{C}^n])$ decomposes as a finite product of unipotent (and equivalently elementary) matrices. 
More results in the algebraic setting can be found in \cite{SuslinSSLGRP}  and \cite{GrunewaldGSL2}.
For a connection to the Jacobian problem on $\mathbb{C}^2$ see \cite{WrightAFPS}.

In the case of continuous complex valued functions on a topological space $X$ the problem was
studied and partially solved  by   {Thurston} and   {Vaserstein} \cite{ThurstonVasersteinK1ES} and  then finally solved by   {Vaserstein} \cite{VasersteinRMDPDFAO}.

It is natural to consider the problem for rings of holomorphic functions on Stein spaces, in particular on
$\mathbb{C}^n$. Explicitly this problem was posed by   {Gromov} in his groundbreaking paper \cite{GromovOPHSEB}  where he extends the classical   {Oka-Grauert} theorem from bundles with homogeneous fibers to fibrations with elliptic fibers, e.g., fibrations admitting a dominating spray (for definition see before Example \ref{Gromovspray}).
In spite of the above mentioned result of   {Vaserstein} he calls it the 

\medskip\noindent
{\bf {Vaserstein problem:}} (see \cite[sec 3.5.G]{GromovOPHSEB}) 

{\sl Does every holomorphic map $\mathbb{C}^n \to \mbox{SL}_m(\mathbb{C})$ decompose into a finite
product of holomorphic maps sending $\mathbb{C}^n$ into unipotent subgroups in $\mbox{SL}_m(\mathbb{C})$?}

\medskip
  {Gromov's} interest in this question comes from the question about s-homotopies (s for spray). In this particular
example the spray  on $ \mbox{SL}_m(\mathbb{C})$ is that coming from the multiplication with
unipotent matrices. Of course one cannot use the upper and lower diagonal unipotent matrices only 
to get a spray (there is no submersivity at the zero section!), there need to be at least one more unipotent subgroup to be used in the multiplication. Therefore the factorization in a product of upper and lower diagonal matrices seems to be a stronger condition than to find a map into the iterated spray, but since all maximal unipotent subgroups in $\mbox{SL}_m(\mathbb{C})$ are conjugated and the upper and lower diagonal matrices generate $\mbox{SL}_m(\mathbb{C})$ these two problems are in fact equivalent. We refer the reader
for more information on the subject to   {Gromov's} above mentioned paper.

As an application of flexibility via a stratified version of the Oka principle from \cite{Forstratified} Ivarsson and the author  gave  a complete positive solution of \textsc{Gromov's}   {Vaserstein} problem.

\begin{Thm} \cite{IK}
Let $X$ be a finite dimensional reduced Stein space and $f\colon X\to \mbox{SL}_m(\mathbb{C})$ be a holomorphic mapping that is null-homotopic. Then there exist a natural number $K$ and holomorphic mappings $G_1,\dots, G_{K}\colon X\to \mathbb{C}^{m(m-1)/2}$ such that $f$ can be written as a product of upper and lower diagonal unipotent matrices
\begin{equation*}f(x) = \left(\begin{matrix} 1 & 0 \cr G_1(x) & 1 \cr \end{matrix} \right)   
\left(\begin{matrix} 1 & G_2(x) \cr 0 & 1 \cr \end{matrix} \right)  \ldots \left(\begin{matrix} 1 & G_K(x)\cr 0 & 1 \cr \end{matrix} \right) 
\end{equation*} for every $x\in X$.
\end{Thm}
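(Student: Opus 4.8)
The plan for proving this is to recast the desired factorization as a lifting problem and then apply Forstneri\v c's Oka principle for sections of stratified holomorphic fibre bundles \cite{Forstratified}. Write $V=\C^{m(m-1)/2}$, and for $G\in V$ let $L(G),U(G)\in\Sl_m(\C)$ denote the lower, resp.\ upper, unipotent matrix whose off-diagonal entries are the coordinates of $G$. For $N\in\N$ consider the holomorphic multiplication map
\begin{equation*}
\mu_N\colon V^{N}\longrightarrow\Sl_m(\C),\qquad (G_1,\dots,G_N)\longmapsto L(G_1)\,U(G_2)\,L(G_3)\cdots,
\end{equation*}
the factors alternating lower, upper, lower, $\dots$ . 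A factorization of $f$ as in the statement is precisely a holomorphic lift $\widetilde f\colon X\to V^N$ with $\mu_N\circ\widetilde f=f$. For $N$ larger than a bound depending only on $m$ the map $\mu_N$ is surjective (elementary matrices generate $\Sl_m(\C)$, and over a field a uniformly bounded number of alternating unipotent factors suffices by Gauss elimination); since in addition $V^N$ is contractible, a \emph{continuous} lift of $f$ exists if and only if $f$ is null-homotopic. Thus the whole problem is to turn a continuous lift into a holomorphic one.

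That is exactly what the stratified Oka principle accomplishes, once one knows that $\mu_N$ is, for $N$ large enough, a \emph{stratified holomorphic fibre bundle with Oka fibres}: there is a finite stratification $\Sl_m(\C)=\bigsqcup_j\Sigma_j$ into smooth locally closed algebraic subvarieties such that each restriction $\mu_N\colon\mu_N^{-1}(\Sigma_j)\to\Sigma_j$ is a holomorphic fibre bundle whose fibre is an Oka (equivalently elliptic) manifold. The natural stratification is a refinement of the Bruhat decomposition of $\Sl_m(\C)$: over the big cell a product of enough elementary matrices has affine fibres, and passing to the smaller cells merely forces one to use more factors. The fibres so produced are built out of the unipotent subgroups $U^{\pm}\cong\C^{m(m-1)/2}$ and copies of $\Sl_m(\C)$ acting on them, and these are holomorphically flexible; this is where the flexibility theory of Section~3 enters — it supplies the dominating sprays that make the fibres Oka. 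Pulling the stratified fibre bundle back along $f$ gives a stratified holomorphic fibre bundle over $X$ whose holomorphic (resp.\ continuous) sections are the holomorphic (resp.\ continuous) lifts of $f$; by \cite{Forstratified} the inclusion of the former into the latter is a weak homotopy equivalence, so the continuous lift coming from the null-homotopy of $f$ deforms to a holomorphic one.

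It remains to see why a \emph{finite} number of factors $N=N(m,\dim X)$ suffices, and here it is convenient to induct on $m$ by clearing $f$ one block at a time. The last column of $f$ is a holomorphic unimodular vector, i.e.\ a map $X\to\C^m\setminus\{0\}$; if this can be steered to the constant vector $e_m$ by a bounded product of (alternating) elementary matrices, then $f$ is brought to the block form $\begin{pmatrix}1&*\\0&A'\end{pmatrix}$, the remaining row $*$ is cleared by the mirror-image operation on the right, and one recurses on $A'\in\Sl_{m-1}(\C)$, the case $m=1$ being trivial. Steering the column is itself an instance of the stratified Oka principle, applied to the bundle ``last column'' $\Sl_m(\C)\to\C^m\setminus\{0\}$ decorated with the stratification ``word length in the elementary generators''; the number of elementary steps it needs is bounded in terms of $\dim X$ and $m$, a holomorphic incarnation of the stable-range estimates of algebraic $K$-theory. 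Carrying this through the $m$ blocks yields the required bound on $N$.

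The substantial difficulty is the structural claim in the second paragraph: that after adjoining enough pairs of elementary factors the multiplication map (and, likewise, the column-clearing map) becomes, over each Bruhat-type stratum, a holomorphic fibre bundle with Oka fibre. This requires a careful analysis of the critical locus of a product of elementary matrices, organized by the cell structure of $\Sl_m(\C)$, a proof that the extra factors ``resolve'' the degenerate loci and restore local triviality along every stratum, the verification that the resulting fibres are flexible, and a check that the stratification pulled back to the possibly singular reduced Stein space $X$ still meets the hypotheses of \cite{Forstratified}. By contrast the reduction to a lifting problem, the contractibility of $V^N$, and the final appeal to the stratified Oka principle are routine.
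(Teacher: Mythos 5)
The survey you are reading does not actually prove this theorem; it only records that the proof in \cite{IK} proceeds ``via a stratified version of the Oka principle from \cite{Forstratified}''. Your overall strategy --- recast the factorization as a lifting problem, obtain a continuous lift, and deform it to a holomorphic one by the stratified Oka principle, inducting on $m$ by clearing the last row/column --- is indeed the strategy of \cite{IK}. But as a proof your text has genuine gaps, the main one of which you concede yourself in the final paragraph.

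First, the existence of a \emph{continuous} lift with a uniformly bounded number of factors does not follow from ``$\mu_N$ surjective and $V^N$ contractible'': $\mu_N$ is not a fibration (it has a nontrivial degeneracy locus, which is the whole source of difficulty), so contractibility of the total space buys you nothing about lifting null-homotopic maps. That continuous factorization, with a bound on the number of factors in terms of $\dim X$ and $m$, is precisely Vaserstein's theorem \cite{VasersteinRMDPDFAO}, a substantive external input that \cite{IK} quotes; your appeal to ``a holomorphic incarnation of the stable-range estimates'' is circular. Second, the structural heart of the argument --- that the relevant map is, over each stratum of a suitable stratification, a holomorphic fibre bundle (or elliptic submersion) with Oka fibres --- is asserted rather than proved, and the version you assert is not the one that is known to work: \cite{IK} does not stratify $\Sl_m(\C)$ by Bruhat cells and analyze $\mu_N$ directly. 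Instead they compose the product map with the projection of $\Sl_m(\C)$ onto its last row, landing in $\C^m\setminus\{0\}$, and prove that \emph{this} composition is a stratified elliptic submersion (the strata being determined by vanishing patterns of the row entries), with dominating fibre sprays built from shear-type complete vector fields; reducing the last row to $(0,\dots,0,1)$ then permits the induction on $m$. That analysis occupies the bulk of \cite{IK} and is exactly the step you defer. In short, you have reconstructed the correct road map, including the correct external tools, but not a proof.
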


We leave it as an exercise to the reader to factorize the Cohn example holomorphically.

\section{The holomorphic linearization problem} \label{Linearization}

The holomorphic linearization problem was well known to the German Complex Analysis School of Grauert and Remmert.
For example a  first special case of the Luna slice theorem was proved by Kuhlmann. The problem was officially stated
by Huckleberry in his chapter in the (that time still Soviet) Encyclopedia of Mathematical Sciences. \cite{Huck}

\begin{Prob}[\bf Holomorphic Linearization Problem] 

Suppose a reductive group $G$ is 
acting   holomorphically on $\C^n$, $n\ge 2$,
Does there exist a holomorphic change of variables $\alpha \in \Aut_{hol} (\C^n)$ such that $\alpha \circ G \circ \alpha^{-1} $ is linear?
\end{Prob}

Independently the late Walter Rudin got interested in this problem, he formulates it
more specifically for a finite cyclic group $G= \Z/n\Z$ in his paper with Ahern \cite{AR}.

\subsection{Basic notions and properties} \label{basics}

Two students of Alan T. Huckleberry, namely Dennis Snow and my adviser Peter Heinzner \cite{Heinzner}
developed the theory of reductive group actions on Stein spaces guided by the results
from algebraic geometry, most prominently by the slice theorem in \'etale topology due to  Domingo Luna. 


Let a reductive group $G=K^\C$, the universal complexification of its maximal compact subgroup $K$, act on a Stein space $X$. Identify two points  $x_1 \equiv x_2$ if $f(x_1) = f(x_2)$ for all $G$-invariant holomorphic functions $ f\in \cO^G (X)$. The quotient space $X\cat G$  has a natural complex structure\
such that $\pi : X \to X\cat G=: Q_X$ is holomorphic. The quotient space $Q_X$ (whose holomorphic functions are the invariant holomorphic functions on $X$, $\cO (Q_X) = \cO^G (X)$) is a  Stein space \cite{Snow}.  Each fiber of $\pi$ contains a unique closed $G$-orbit which is contained in the closure of any of the other $G$-orbits in the same $\pi$-fiber. For a point $x$ in this unique closed orbit the
complexification of the $K$-isotropy is equal to the $G$-isotropy, $(K_x)^\C = (K^\C)_x$.

\bigskip
If $X$ is affine algebraic and $G$ acts algebraically the categorical quotient is the same as the "Hilbert" quotient $(Spec (\C[X]^G) = X\cat G)$. 

\bigskip 
In general the fibers of $\pi$ are affine $G$-varieties not necessarily reduced. This can be easily seen from the fact that locally every Stein $G$-manifold can be properly equivariantly 
embedded into a linear $G$-representation, i.e., a $\C^N$ with linear $G$ action, together with fact 1 below. If $X$ has only  finitely many slice types there is even a global such embedding. This is the main result of Peter Heinzner's  Ph.D. thesis \cite{Heinzner1}. 

The following facts are easy. The first fact uses the property that $G$-invariant holomorphic functions from a subvariety $Y$ of a Stein space $X$ extend 
to $G$-invariant functions on $X$ (proved by using Cartan extension and averaging over the maximal compact subgroup $K$ of $G$). The second fact is obvious.

\smallskip\noindent
{\bf Fact 1:}  If $Y$ is a closed $G$-invariant subspace of a Stein $G$-space $X$, then the restriction of the categorical quotient map $\pi : X \to Q_X$ to $Y$ is a categorical quotient map for $Y$
             and the categorical quotient for $Y$ is the image $Q_Y \cong \pi (Y)$ of $Y$, which is a closed subspace of $Q_X$.

\smallskip\noindent
{\bf Fact 2:} If $X$ is a Stein $G$-space with categorical quotient map  $\pi : X \to Q_X$ and $Y$ is a Stein space with trivial $G$-action, then the categorical quotient map for $X\times Y$
is $\pi \times \Id_Y : X\times Y \to Q_X\times Y \cong Q_{X\times Y}$.

\begin{Exa} 

Let the group $\Sl_n (\C)$ act on  the vector space of all $n$ by $n$ matrices $Mat(n\times n, \C)$ by conjugation $\Sl_n (\C) \times Mat(n\times n, \C) \to Mat(n\times n, \C)$
$ (G,M) \mapsto GMG^{-1}$. We know that the orbits of this action correspond to Jordan normal forms. The categorical quotient is given by the coefficients of the characteristic polynomial
$\chi_M (\lambda) = \det (\lambda E-M) = \lambda^n + a_{n-1} \lambda^{n-1} + \ldots + a_0$ or equivalently by the elementary symmetric polynomials in the eigenvalues of the matrix $M$.
This means 

$$\pi: Mat(n\times n, \C) \to \C^n, \\\\ M \mapsto (a_0, a_1, \ldots, a_{n-1})$$

\noindent
is a categorical quotient  map for the above action and thus $Q_{Mat(n\times n, \C)} = Mat(n\times n, \C) \cat \Sl_n (\C) \cong \C^n$.
\end{Exa}

The categorical quotient carries a stratification defined as follows. We say that two points $q, q^\prime \in Q_X$ are in the same Luna stratum if the fibers $\pi^{-1} (q)$ and $\pi^{-1} (q^\prime)$ are $G$-biholomorphic. Remember the fibers are affine  not necessarily reduced $G$-varieties.  

\begin{Thm} [Luna stratification] \cite{Snow}
The Luna strata form a locally finite stratification of $Q_X$ by locally closed smooth analytic
subvarieties.
\end{Thm}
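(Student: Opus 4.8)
The plan is to argue locally on $Q_X$ and reduce everything to the case of a linear representation of a reductive group, where the statement is the classical Luna stratification of a representation quotient. First I would fix $q\in Q_X$, pick a point $x_0$ on the (unique) closed $G$-orbit in $\pi^{-1}(q)$, so that $H:=G_{x_0}$ is reductive, and invoke the holomorphic Luna slice theorem (Heinzner, Snow): there is an $H$-stable locally closed Stein submanifold $S\ni x_0$ — which may be taken to be an $H$-stable open neighbourhood of $0$ in the slice representation $N:=T_{x_0}X/T_{x_0}(Gx_0)$ — such that the natural map $G\times_H S\to X$ is a biholomorphism onto a $\pi$-saturated neighbourhood of $Gx_0$, and the induced map $S\cat H\to Q_X$ is a biholomorphism onto a neighbourhood $U$ of $q$. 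Under this identification the fibre of $\pi$ over a nearby $q'$ is $G$-biholomorphic to $G\times_H F_{q'}$, where $F_{q'}$ is the fibre of $S\to S\cat H$. I would then check that $G\times_H F_{q'}\cong_G G\times_H F_{q''}$ if and only if $F_{q'}$ and $F_{q''}$ are $H$-biholomorphic: one direction is immediate, and for the other one uses that each such fibre has a unique closed orbit with reductive isotropy, so an equivariant isomorphism matches these closed orbits and descends to the slices. Hence, near $q$, the Luna stratification of $Q_X$ coincides with that of $N\cat H$, and it suffices to treat the case $X=N$ a linear representation of a reductive group $H$.

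For the linear case I would follow the classical analysis. For a reductive subgroup $L\le H$ set $N_{(L)}:=\{\,v\in N:\ Hv\ \text{is closed and}\ H_v\ \text{is $H$-conjugate to}\ L\,\}$ and $(N\cat H)_{(L)}:=\pi(N_{(L)})$; since the fibre of $\pi$ over $\pi(v)$ with $Hv$ closed is $H$-biholomorphic to $H\times_{H_v}(\text{slice at }v)$, its $H$-isomorphism type and the conjugacy class of $H_v$ determine one another, so the $(N\cat H)_{(L)}$ are exactly the Luna strata. To establish smoothness and local closedness I would take a further slice at $v$ with $H_v=L$: the fixed-point set $N^L$ is a linear subspace, the set $N^L_{(L)}$ of vectors with isotropy exactly $L$ is Zariski-open in $N^L$, the ``Weyl group'' $W:=N_H(L)/L$ is reductive and acts on $N^L$ preserving $N^L_{(L)}$, this action is free (if $gv=v$ with $g\in N_H(L)$ and $H_v=L$, then $g\in L$) and has closed orbits, and $\pi$ induces a biholomorphism $(N\cat H)_{(L)}\cong N^L_{(L)}\cat W$. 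I would then appeal to the fact (Holmann, Heinzner) that a free proper holomorphic action of a reductive group on a Stein manifold with closed orbits admits a Stein-manifold geometric quotient which is a principal bundle; hence $N^L_{(L)}\cat W$ is smooth, and it is locally closed in $N\cat H$ because $N^L_{(L)}$ is locally closed and $W$-saturated in $N^L$, with $N^L$ mapping onto a closed subvariety of $N\cat H$. Local finiteness in the linear model follows from the classical finiteness of the number of orbit types among points with closed orbit for a linear reductive action, and the frontier relations are the classical ones: $(N\cat H)_{(L)}$ meets the closure of $(N\cat H)_{(L')}$ only if $L'$ is conjugate into $L$.

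To conclude, I would transport these descriptions back through the biholomorphisms of the first step: every $q\in Q_X$ has a neighbourhood on which the Luna stratification restricts to the finitely many smooth locally closed Luna strata of a representation quotient. Gluing, each Luna stratum of $Q_X$ is a locally closed smooth analytic subvariety, the stratification is locally finite, and the frontier condition is inherited chart by chart from the linear model.

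The hard part will be the two main inputs on which this rests: the holomorphic slice theorem, and — inside the linear model — the smoothness of the strata, i.e. the statement that a free (proper) holomorphic action of a reductive group on a Stein manifold with closed orbits has an étale-locally-trivial Stein-manifold geometric quotient. A more technical but still delicate point is the equivalence $G\times_H F\cong_G G\times_H F'\iff F\cong_H F'$, which requires the uniqueness of the closed orbit in a fibre together with careful bookkeeping of isotropy groups; and the finiteness of the number of orbit types for a linear reductive action, while classical, is a genuine ingredient rather than a formality.
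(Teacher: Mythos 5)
The paper does not actually prove this statement: it is quoted verbatim from Snow \cite{Snow}, and the only in-paper commentary is the later remark (after Theorem \ref{thm:holomorphic.slice}) identifying these Luna strata with Snow's stratification by slice representations. So there is no internal proof to compare against; what you have written is, in outline, exactly the standard Luna--Snow argument that the citation points to: reduce via the holomorphic slice theorem to a slice representation $N$ of the reductive isotropy group $H$ of the closed orbit, and in the linear model realize the stratum attached to a conjugacy class $(L)$ as the quotient of the open set $N^L_{(L)}\subset N^L$ by the free action of $W=N_H(L)/L$, with local finiteness coming from finiteness of closed-orbit isotropy types. Two places in your sketch are thinner than the rest and are where the real content sits. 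First, the strata here are defined by $G$-biholomorphism type of the (possibly non-reduced) fibres, while the linear model naturally stratifies by slice type; your equivalence $G\times_H F\cong_G G\times_H F'\iff F\cong_H F'$ as stated only gives that the closed-orbit isotropy groups are conjugate in $G$, and one must still argue that the slice representation (which is the normal datum $T_xX/T_x(Gx)$, not intrinsic to the abstract fibre) is recovered, so that the fibre-type and slice-type partitions coincide --- this is exactly the bookkeeping you defer, and it is the step that would fail if done carelessly. Second, local closedness of $(N\cat H)_{(L)}$ rests on $\pi(N^L)$ being closed, which is not automatic from Fact 1 since $N^L$ is not $H$-invariant; one needs the finiteness of strata together with the frontier relations (closure of a stratum only meets strata with larger isotropy) to conclude. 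With those two points supplied, as they are in \cite{Snow} and in Luna's original \'etale-slice treatment, your argument is a correct reconstruction of the cited proof.
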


The following very easy example will play a role for our counterexamples to Holomorphic Linearization.

\begin{Exa}  \label{nonaction} Let $G=\C^\star$ act on $\C^3$ by the rule 

\begin{equation} G \times \C^3 \to \C^3, \ \ (\lambda, (u, v,  w)) \mapsto (\lambda^2 u, \lambda^{-2} v,  \lambda w)
\end{equation}

The categorical quotient $Q_{\C\star}$ is isomorphic to $\C^2$ and the map is $(u, v, w) \mapsto (uv, w^2 v)$. The Luna stratification of the quotient is easy:
$Q_{\C^\star}\cong \C^2 \supset \C\times\{0\}\supset \{(0,0)\}$. The isotropy groups in the closed orbits over points of the strata are isomorphic to $\{\Id\}, \{\pm \Id \}, \C^\star$ respectively.

\end{Exa}

Moreover the local structure of holomorphic $G$-actions on Stein manifolds is well understood, thanks to the following slice theorem due to Snow \cite{Snow}.  

The setting of this holomorphic slice theorem is as follows.  Let the complexification $G=K^\C$ of a compact Lie group $K$ act holomorphically on a Stein space $X$ with categorial quotient $\pi:X\to X\cat K^\C$.  Let $x$ be a point in the unique closed $G$-orbit in the fiber $\pi^{-1} (q)$, where $q= \pi (x)$, with stabilizer $L^\C$, where $L=K_x$.  Let $V=T_xX/T_x K^\C x$ be the normal space to the orbit $K^\C x$ at $x$.  It is an $L^\C$-module.  With respect to the identification $K^\C/L^\C\simeq K^\C x$, the normal bundle $N$ of $K^\C x$ in $X$ is isomorphic to $K^\C\times^{L^\C} V$.

\begin{Thm}[Luna slice theorem]   \label{thm:holomorphic.slice}
There is a  saturated (with respect to $\pi$) Stein neighborhood $U$ of the orbit $K^\C x$ in $X$, $K^\C$-equivariantly biholomorphic to a subvariety $A$ of a neighborhood of the zero section of $N$.  The embedding $\iota:U\to N$ maps $K^\C x$ biholomorphically onto the zero section of $N$.  
\end{Thm}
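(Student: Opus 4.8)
\medskip\noindent\emph{Proof strategy (proposal).} The plan is to reduce the statement to a linear action and then read the slice off the linear model, in the spirit of Luna's \'etale slice argument. First I would invoke the local equivariant embedding of Stein $G$-spaces (see \cite{Snow}, \cite{Heinzner1}): there is a $\pi$-saturated Stein neighbourhood $\Omega$ of the closed orbit $K^\C x$ together with a $K^\C$-equivariant closed holomorphic embedding $\Omega\hookrightarrow W$ into a finite dimensional $K^\C$-module $W$. Replacing $X$ by $\Omega$ one may then assume that $X$ is a closed $K^\C$-invariant subvariety of $W$ with $K^\C x$ closed in $W$. Write $L=K_x$; by the displayed identity $(K^\C)_x=(K_x)^\C$ recalled above, $L^\C=(K_x)^\C$ is reductive because $L$ is compact, so every finite dimensional holomorphic $L^\C$-module is completely reducible and every $L$-invariant subspace of such a module is automatically $L^\C$-invariant ($L$ being Zariski dense in $L^\C$). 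In particular the $L^\C$-module $W=T_xW$ splits $L^\C$-equivariantly as $W=T_x(K^\C x)\oplus V_W$, with $T_x(K^\C x)=\{\xi\cdot x:\xi\in\mathfrak g\}$ the tangent to the orbit, $\mathfrak g=\Lie(K^\C)$.

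Next I would introduce the elementary model map
\[
\Psi_W\colon K^\C\times^{L^\C}V_W\longrightarrow W,\qquad [g,v]\longmapsto g\cdot(x+v),
\]
which is well defined since $L^\C$ fixes $x$, is holomorphic and $K^\C$-equivariant, carries the zero section biholomorphically onto $K^\C x$, and whose differential at $[e,0]$ is the identity once one identifies $T_{[e,0]}(K^\C\times^{L^\C}V_W)=\mathfrak g/\mathfrak l\oplus V_W$ with $T_x(K^\C x)\oplus V_W=W$. Hence $\Psi_W$ restricts to a $K^\C$-equivariant biholomorphism of a $K^\C$-invariant neighbourhood of the zero section onto a $K^\C$-invariant neighbourhood of $K^\C x$ in $W$. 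Since $X$ is $K^\C$-invariant, $g\cdot(x+v)\in X$ if and only if $x+v\in X$, so the preimage of $X$ is the $K^\C$-invariant subvariety $K^\C\times^{L^\C}A$ with $A=\{v\in V_W:x+v\in X\}$; it contains the zero section and, cutting out $A$ by the sum of the ideals of $X-x$ and of $V_W$, one gets $T_0A=T_xX\cap V_W$. Thus $\Psi_W$ already identifies a $K^\C$-invariant neighbourhood of $K^\C x$ in $X$ with $K^\C\times^{L^\C}A$.

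Two refinements remain. First, I would place $A$ inside the intrinsic normal bundle $N=K^\C\times^{L^\C}V$, $V=T_xX/T_x(K^\C x)$. From $T_x(K^\C x)\subseteq T_xX\subseteq W=T_x(K^\C x)\oplus V_W$ one gets $T_xX=T_x(K^\C x)\oplus(T_xX\cap V_W)$ automatically, so $V\cong T_xX\cap V_W=T_0A$ as $L^\C$-modules; choosing an $L^\C$-invariant complement $V_W=(T_xX\cap V_W)\oplus C$ and letting $p\colon V_W\to T_xX\cap V_W$ be the projection, the differential of $p|_A$ at $0$ is the identity of $T_xX\cap V_W$, hence $p|_A$ is an $L^\C$-equivariant closed immersion near $0$ onto a subvariety $\widetilde A$ of a neighbourhood of $0$ in $T_xX\cap V_W\cong V$; pulling $\Psi_W$ back along the induced isomorphism $K^\C\times^{L^\C}\widetilde A\to K^\C\times^{L^\C}A$ exhibits the neighbourhood as $K^\C$-equivariantly biholomorphic to the subvariety $K^\C\times^{L^\C}\widetilde A$ of a neighbourhood of the zero section of $N$, with $K^\C x$ going onto the zero section. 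Second, I would make the neighbourhood $\pi$-saturated: since $0$ is $L^\C$-fixed its fibre under $\widetilde A\to\widetilde A\cat L^\C$ is a point, so $\widetilde A$ may be shrunk to a neighbourhood of $0$ that is saturated for $\widetilde A\to\widetilde A\cat L^\C$; as the $K^\C$-quotient of the associated bundle $K^\C\times^{L^\C}\widetilde A$ is $\widetilde A\cat L^\C$ and $\pi$ on our neighbourhood corresponds to the bundle projection followed by this quotient, the resulting $U$ is $\pi$-saturated, and it is Stein because free reductive quotients of Stein spaces are Stein.

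I expect the main obstacle to lie not in the construction of the equivariant local biholomorphism — once the linear model is available this is just $[g,v]\mapsto g\cdot(x+v)$ — but in these last bookkeeping steps: descending from the ambient representation $W$ to the intrinsic normal bundle $N=K^\C\times^{L^\C}V$ (which rests on reductivity of $L^\C$, i.e.\ on $(K^\C)_x=(K_x)^\C$), and producing a $\pi$-saturated Stein neighbourhood, which draws on the complex-analytic geometric invariant theory of Snow and Heinzner (existence and Steinness of categorical and of free quotients, and of saturated neighbourhoods of closed orbits). One should also keep in mind that the reduction rests on the cited local equivariant embedding of a Stein $G$-space into a representation, which itself does genuine work; an alternative to that reduction would be to first linearize the action of the compact group $K$ near $Kx$ by a $K$-invariant tubular neighbourhood and then complexify, but this requires matching up the complex structures and seems no less delicate.
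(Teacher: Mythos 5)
First, a point of reference: the paper does not prove this theorem at all --- it is quoted as a known result of Snow \cite{Snow} --- so there is no in-text argument to compare yours against. Your outline is the standard Luna--Snow proof (local equivariant embedding of a saturated neighbourhood into a $K^\C$-module $W$, the $L^\C$-splitting $W=T_x(K^\C x)\oplus V_W$ coming from reductivity of $L^\C=(K^\C)_x$, the model map $[g,v]\mapsto g\cdot(x+v)$, restriction to $X$, and descent to the intrinsic normal bundle $N$), and that architecture, including the identification $V\cong T_xX\cap V_W$ and the projection of the ambient slice into $V$, is correct. But two of the steps you dismiss as routine are exactly where the real work sits, and as written they do not go through. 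The first is the passage from ``$\Psi_W$ is a local biholomorphism at each point of the zero section and injective on the zero section'' to ``$\Psi_W$ is a biholomorphism of a neighbourhood of the zero section onto a neighbourhood of $K^\C x$.'' The closed orbit $K^\C x$ is in general non-compact (already $K^\C=\C^\ast$ gives an orbit biholomorphic to $\C^\ast$), and a locally biholomorphic map injective on a non-compact subset need not be injective on any neighbourhood of it. Repairing this is precisely the content of Luna's fundamental lemma and of its holomorphic analogue in \cite{Snow}: one shrinks to $\pi$-saturated neighbourhoods and compares the induced maps on categorical quotients to force injectivity. So saturation is not a cosmetic final step; it is the mechanism that upgrades the infinitesimal statement to a biholomorphism along the whole orbit, and it has to enter here.

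The second gap is the assertion ``since $0$ is $L^\C$-fixed its fibre under $\widetilde A\to\widetilde A\cat L^\C$ is a point.'' This is false: the fibre of a categorical quotient through a fixed point is the full null-fibre, consisting of all points whose orbit closure meets the fixed point. For $\C^\ast$ acting on $\C^2$ with weights $(1,1)$ the only invariants are constants and that fibre is all of $\C^2$. What actually makes the saturation step work is not smallness of the fibre but the fact that preimages of Stein neighbourhoods of the image of $0$ in $\widetilde A\cat L^\C$ form a basis of saturated Stein neighbourhoods of the fibre, hence in particular contain neighbourhoods of $0$; combined with the isomorphism $(K^\C\times^{L^\C}\widetilde A)\cat K^\C\cong\widetilde A\cat L^\C$ and the quotient comparison from the fundamental lemma, this yields a $\pi$-saturated Stein $U$. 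With these two repairs --- both supplied by the Snow--Heinzner invariant theory you already cite --- your strategy does prove the theorem.
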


If $X$ is smooth, the submanifold $A$ in the Luna Slice  Theorem is open. Thus the representation of $L^\C$ on $V=T_xX/T_x K^\C x$, called the slice representation, determines the local structure of the $K^\C$-manifold $X$
on a saturated neighborhood of $x$. Therefore our  Luna stratification is the same as the stratification by slice representations from \cite{Snow}. We will call the categorical quotient $Q_X$ equipped with the Luna stratification  the {\bf Luna quotient}. An isomorphism of Luna quotients $Q_X$ and $Q_Y$ (for Stein $G$-manifolds $X$ and $Y$) is by definition a biholomorphism between the Stein spaces $Q_X$ and $Q_Y$
mapping corresponding Luna strata onto each other.  Clearly an equivariant biholomorphism between $X$ and $Y$ induces an isomorphism of the Luna quotients.

\subsection{History of the linearization problem} \label{history}

Let us describe the most important results of the holomorphic part of the problem

\begin{itemize}
\item
If the quotient $\C^n \cat G$ is zero dimensional (one point) the action is linearizable. This follows from Luna's slice theorem (Theorem \ref{thm:holomorphic.slice}).

\item
Holomorphic $\C^\star$-actions on $\C^2$ are linearizable (M. Suzuki 1977 \cite{Suzuki}). 

\item
Holomorphic actions with one-dimensional quotient are linearizable. This is the main result from the Ph.D. thesis of  Jiang 1990.  A new proof  is given by  L\'arusson, Schwarz  and the author in 2016.
\cite{KLS2}.

\item
The first counterexamples to the {\bf Algebraic Linearization Problem}
are given in 1989 by G. Schwarz: These examples are  $G$-vector bundles over representation spaces, see Example \ref{counter} and the discussion preceding it.
Based on the same method F. Knop constructs   {\bf algebraically} non-linearizable actions for all semi-simple groups $G$  \cite{Knop}.

\item
Holomorphic $G$-vector bundles over representation spaces are holomorphically trivially, in other words: the corresponding actions are  linearizable. 
This is an application of the Equivariant Oka Principle proved by Heinzner and the author in 1995. For the definition of a Kempf-Ness set we refer the interested
reader to  \cite{HK}, for our application the existence of the homotopy is not essential.  The existence of the homotopy over the whole space $X$ together
with a generalization to bundles with homogeneous fiber see the recent work of L\' arusson, Schwarz and the author \cite{KLS.homog}.

\begin{Thm}[Equivariant Oka Principle]   \label{thm:HK-classification}

{\rm (a)}  Every topological principal $K$-$G$-bundle on A Stein space $X$ is topologically $K$-isomorphic to a holomorphic principal $K^\C$-$G$-bundle on $X$.

{\rm (b)}  Let $P_1$ and $P_2$ be holomorphic principal $K^\C$-$G$-bundles on $X$. Let $c$ be a continuous $K$-equivariant section of $\Iso(P_1,P_2)$ over $R$. Then there exists a homotopy of continuous $K$-equivariant sections $\gamma(t)$, $t\in[0,1]$, of $\Iso(P_1,P_2)$ over  a Kempf-Ness set $R$ such that $\gamma(0)=c$ and $\gamma(1)$ extends to a holomorphic $K$-equivariant isomorphism from $P_1$ to $P_2$.
\end{Thm}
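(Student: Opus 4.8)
The plan is to transfer the whole problem from the Stein $G$-space $X$ to a Kempf--Ness set $R\subset X$ and there exploit the compactness of $K$. Recall from the theory of reductive actions on Stein spaces (\cite{Snow}, \cite{HK}) that $R$ is a $K$-invariant real-analytic subset which is a $K$-equivariant deformation retract of $X$, and that $\pi|_R\colon R\to Q_X$ is a proper surjection inducing a homeomorphism $R/K\cong Q_X$. Because of the $K$-equivariant retraction, a topological principal $K$-$G$-bundle on $X$ is determined up to topological $K$-isomorphism by its restriction to $R$, and two holomorphic principal $K^\C$-$G$-bundles that are topologically $K$-isomorphic over $R$ are holomorphically $K$-isomorphic on a saturated neighbourhood of $R$, hence on all of $X$. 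So both parts reduce to assertions about $K$-equivariant data over $R$, where the standard device --- \emph{average over $K$} --- upgrades topological splittings to $K$-equivariant ones. The non-equivariant backbone throughout is Grauert's classical Oka principle for principal bundles with complex Lie structure group over Stein spaces.

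For part (a) I would induct over the Luna stratification of $Q_X$, which is locally finite by the Luna stratification theorem. Working outward from the most singular stratum, the holomorphic slice theorem (Theorem \ref{thm:holomorphic.slice}) identifies a saturated piece of $X$ over each stratum with an open subset of a linear model $K^\C\times^{L^\C}V$; over such a model the classification of principal $K^\C$-$G$-bundles with compatible $K$-action reduces to that of $L^\C$-equivariant $G$-bundles over the $L^\C$-representation $V$, which is the base case. To glue the holomorphic models over two overlapping saturated pieces I would use the equivariant Grauert splitting lemma: a $G$-valued transition cocycle close to the identity over the (Stein) intersection is the exponential of a section of the coherent sheaf of $\mathfrak g$-valued holomorphic functions, which splits additively by Cartan's Theorem B; averaging that additive splitting over $K$ makes it $K$-equivariant, and re-exponentiating yields the desired multiplicative $K$-equivariant splitting. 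Iterating over the stratification produces the global holomorphic $K^\C$-$G$-bundle together with the topological $K$-isomorphism to the given one.

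For part (b), note that $\Iso(P_1,P_2)$ is a holomorphic fibre bundle over $X$ with fibre $G$ and a compatible $K$-action, whose sections are exactly the isomorphisms $P_1\to P_2$; we are handed a continuous $K$-equivariant section $c$ over $R$. Here I would run the homotopy version of the Oka--Grauert principle, again stratum by stratum via the slice models of Theorem \ref{thm:holomorphic.slice}, but now with the analytic machinery --- Cartan pairs and the bump lemma, $\bar\partial$-estimates, and Cartan's Theorems A and B with bounds --- to deform $c$ through continuous $K$-equivariant sections over $R$ until it becomes holomorphic on an increasing chain of saturated neighbourhoods and finally extends holomorphically and $K$-equivariantly over $X$. $K$-equivariance of each deformation step is restored by averaging over $K$; the partial homotopies produced over different strata are spliced with a partition of unity pulled back from the paracompact Stein quotient $Q_X$ and then made $K$-invariant.

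The hard part, in both (a) and (b), is the interaction between the stratification of $Q_X$ and the non-freeness of the $G$-action: one must control how an equivariant bundle --- or an isomorphism of bundles --- on the saturated neighbourhood of a given stratum degenerates along the adjacent, more singular strata, and establish the equivariant splitting and approximation lemmas uniformly across this filtration with transition data valued in the nonabelian group $G$. Almost as delicate is the passage between $X$ and the real-analytic set $R$: one must show that restriction to $R$ is an equivalence up to $K$-homotopy and then re-enter the holomorphic category through the (non-holomorphic) retraction. Once those two technical points are secured, the remaining ingredients are the by-now-classical Oka--Grauert arguments, rendered $K$-equivariant throughout by averaging over the compact group $K$.
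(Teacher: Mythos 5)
First, a point of order: the paper does not prove Theorem \ref{thm:HK-classification} at all --- it is quoted as a black box from Heinzner and Kutzschebauch \cite{HK}, so there is no in-paper argument to compare yours against. Measured against the actual proof in \cite{HK}, your outline does assemble the right ingredients in the right order: reduction to the Kempf--Ness set $R$ via the $K$-equivariant deformation retraction and the homeomorphism $R/K\cong Q_X$, averaging over the compact group $K$, local models from the slice theorem (Theorem \ref{thm:holomorphic.slice}), an equivariant Cartan-type splitting lemma obtained by linearizing through the exponential, and Grauert's Oka principle as the non-equivariant backbone. As a roadmap this is faithful to the strategy of \cite{HK}.

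But as a proof it has genuine gaps, and not only the two you flag at the end. (i) Your ``base case'' over a slice model $K^\C\times^{L^\C}V$ is not a base case: the statement that holomorphic equivariant bundles over an $L^\C$-module $V$ are classified by their topological ($L$-equivariant) data is essentially the theorem itself in its simplest nontrivial instance (it is exactly what is needed to trivialize Schwarz's $G$-vector bundles, Example \ref{counter}), so it cannot be declared and must be proved --- in \cite{HK} this is where the real analytic work sits. (ii) The exponential-plus-averaging splitting only applies to cocycles close to the identity; producing a cocycle in that range requires a prior homotopy/Runge approximation argument for $G$-valued equivariant maps, i.e.\ a chunk of part (b) is needed inside part (a), and your sketch does not break this circularity. (iii) A partition of unity on $Q_X$ cannot splice $G$-valued data: $G$ is nonabelian and there is no convex structure to average in; splicing must go through Cartan pairs and the multiplicative (nonabelian) Cousin-type decomposition, uniformly over the Luna strata, which is precisely the hard uniformity you defer. (iv) The passage from continuous $K$-equivariant sections over $R$ to holomorphic ones is delicate because $R$ is only a real-analytic $K$-invariant set, not a Stein subspace; one cannot simply ``re-enter the holomorphic category through the retraction'' but must use that saturated neighbourhoods of $R$ exhaust $X$ together with an equivariant approximation theorem on such neighbourhoods. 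In short: the skeleton matches \cite{HK}, but every load-bearing step is asserted rather than established.
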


\item
Holomorphic actions of de Jonqui\`eres (triangular) type (see formula \eqref{Jonq}) on $\C^n$ are linearizable. Also holomorphic actions in the overshear group of $\C^2$ are linearizable. This result of Kraft and the author \cite{KK} also makes use of  the 
Equivariant Oka Principle (Theorem \ref{thm:HK-classification}).

\end{itemize}

\subsection{Counterexamples}\label{counterex}

The first counterexamples to the Holomorphic Linearization Problem were found by Derksen and the author. In contrast to the algebraic situation, where
 counterexamples for abelian groups are still unknown, they constructed counterexamples for any reductive group $G$. For the case of finite groups  $G$ they
had to use the classification of finite simple groups.

\begin{Thm} \cite{DK1}
For all reductive groups $G$ there exists a number $N(G)$ such that for all $n\ge N(G)$ there is a non-linearizable 
action of $G$ on $\C^n$.
\end{Thm}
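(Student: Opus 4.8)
The plan is to construct, for each reductive $G$, a holomorphic $G$-vector bundle over a representation space which is \emph{holomorphically} non-trivial, and to ``inflate'' it until the total space becomes biholomorphic to $\C^n$ while the action stays non-linearizable. First I would recall (from Example \ref{counter} and the discussion of the Algebraic Linearization Problem) that algebraically non-linearizable $G$-actions on some $\C^m$ are known for all semi-simple groups (Knop) and, via products with suitable torus or cyclic factors, for all reductive $G$; in the finite case one invokes the classification of finite simple groups to produce a non-trivial $G$-vector bundle over a representation whose non-triviality survives in the holomorphic category. The obstruction to holomorphic linearization will be encoded as usual in the Luna quotient: a linearizable action has a Luna quotient isomorphic to that of a linear action, and one arranges the example so that the stratified quotient $Q_{\C^n}$ (with its Luna stratification, Theorem \ref{thm:holomorphic.slice}) cannot match that of any linear action --- for instance because one of the fixed-point components, or one of the slice representations along a stratum, is a holomorphically exotic object (a non-straightenable embedded $\C^k$, or an exotic copy of affine space).

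The heart of the construction is Asanuma's trick together with Theorem \ref{AL-Theorem}. First I would take a non-straightenable holomorphic embedding $\iota:\C^k\hookrightarrow\C^m$ (application (2) in Section 4, built from a non-tame discrete set via Proposition \ref{interpol}), or more precisely a holomorphic family of pairwise non-equivalent embeddings as in Theorem \ref{mainembedding}. The point of Asanuma's idea is that while $\iota(\C^k)$ sits inside $\C^m$ in an exotic way, the total space of a suitable normal-bundle-like construction $\C^m\times\C^{m-k}$ --- or a $G$-vector bundle whose base contains such an exotic sublocus --- is \emph{still} biholomorphic to $\C^n$ for $n$ large, because after stabilising by enough extra $\C$-factors the embedding becomes straightenable (this is exactly the stable phenomenon behind ``non-straightenable but stably trivial''). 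Thus one gets an honest biholomorphism $\C^n\cong E$, where $E$ carries a $G$-action, and the pull-back of the $G$-action to $\C^n$ is the desired action. I would choose the $G$-representation on the base and the fibre so that the fixed-point set $(\C^n)^G$, or a Luna stratum, is (biholomorphic to) the exotic object, which a linear action could never reproduce: fixed-point sets of linear actions are linear subspaces, and more generally the Luna data of a linear action is rigid.

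Concretely, the steps in order are: (i) fix a reductive $G$ and, using Knop/Schwarz plus (in the finite case) the classification of finite simple groups, produce a linear $G$-action on some $\C^l$ and a holomorphically non-trivial $G$-vector bundle $E_0\to\C^l$; (ii) realise the non-triviality geometrically as an exotic embedding (or family of embeddings) into affine space, using Proposition \ref{interpol} and Theorem \ref{mainembedding}; (iii) apply Asanuma's stabilisation: show that for $n=N(G)$ large enough the total space of the stabilised bundle $E_0\oplus(\text{trivial})$ is biholomorphic to $\C^n$, which is where Theorem \ref{AL-Theorem} (Anders\'en--Lempert) does the real work of building the biholomorphism as a limit of compositions of flow maps; (iv) transport the $G$-action to $\C^n$ through this biholomorphism; (v) compute the Luna quotient of the resulting action and exhibit a stratum or fixed-point component that is not biholomorphic to the corresponding object for any linear $G$-action on $\C^n$, concluding non-linearizability. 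The main obstacle is step (iii) together with step (v): one must verify \emph{simultaneously} that enough stabilisation makes the total space $\C^n$ (so the ambient manifold is genuinely affine space, not merely a long $\C^n$) and yet \emph{not} so much that it also trivialises the $G$-structure and kills the exotic invariant --- i.e. one needs the stabilisation to be large in the ``forgetful'' direction but controlled in the ``equivariant'' direction. Getting the bookkeeping of dimensions $N(G)$ right, and checking that the exotic Luna datum is genuinely preserved under the biholomorphism produced by the Anders\'en--Lempert machinery (which a priori destroys everything outside a compact set), is the delicate part.
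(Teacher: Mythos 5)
Your overall architecture (exotic embedding $\to$ Asanuma stabilisation $\to$ a copy of $\C^n$ carrying the action $\to$ Luna-quotient obstruction) does capture the skeleton of the Derksen--Kutzschebauch construction, and your step (v) correctly identifies the obstruction: a Luna stratum reproduces the non-straightenable embedded $\C^l$, which no linear action can match. But your step (i) is not just unnecessary, it is impossible, and it contradicts a theorem quoted in this very paper: by the Equivariant Oka Principle (Theorem \ref{thm:HK-classification}) \emph{every} holomorphic $G$-vector bundle over a representation space is holomorphically trivial, i.e.\ the corresponding action is linearizable. This is precisely why the algebraic counterexamples of Schwarz and Knop (Example \ref{counter}) do \emph{not} survive in the holomorphic category and why a genuinely different mechanism was needed. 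There is no ``non-trivial $G$-vector bundle over a representation whose non-triviality survives holomorphically,'' and the classification of finite simple groups is not used to produce one. The object that actually carries the action is not a vector bundle but the pseudo-affine modification (Rees space) $M=\{f_i(z)=u_iv\}\subset\C^{n+N+1}$ of $\C^{n+1}$ along $\varphi(\C^l)\times\{v=0\}$ inside the divisor $\{v=0\}$, with the $\C^*$-action given by restricting the explicit linear action $(v,u,x)\mapsto(\nu^2v,\nu^{-2}u,\nu x)$ on the ambient coordinates.

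You also misattribute the work in step (iii). The biholomorphism $M\times\C^l\cong\C^{n+l+1}$ is \emph{not} built by the Anders\'en--Lempert machinery as a limit of flows; it comes from the elementary Lemma \ref{straight}, which uses Cartan's Theorem B to extend the embedding maps and then composes two explicit triangular automorphisms $\alpha_1,\alpha_2$ to straighten the center inside the divisor after crossing with $\C^l$. Because this biholomorphism is explicit and parametrized, the Luna datum is computed exactly, so your worry that the Anders\'en--Lempert approximation ``destroys everything outside a compact set'' and might kill the exotic invariant does not arise; there is no delicate balance between ``enough'' and ``too much'' stabilisation. Theorem \ref{AL-Theorem} enters only one step earlier, in producing the non-straightenable embedding itself (Proposition \ref{interpol}, embedding $\C^l$ through a non-tame discrete set). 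So the correct logical order is: Anders\'en--Lempert gives the exotic embedding; the affine modification plus the Asanuma straightening lemma gives a $\C^*$-action on an honest $\C^{n+l+1}$; and the Luna stratification (the inclusion of the fixed-point stratum into the $\Z/2\Z$-isotropy stratum being $\varphi(\C^l)\subset\C^n$) gives non-linearizability. Passing from $\C^*$ to an arbitrary reductive $G$ is the remaining nontrivial input of \cite{DK1}, and that --- not a bundle non-triviality statement --- is where the classification of finite simple groups is invoked for finite $G$.
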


The smallest dimension $N(G)$ in which we know counterexamples is 4. These are counterexamples   for $G=\C^\star$ or $G=Z / 2 Z$. The exact value of $N(G)$ is not known
for a single group $G$. Funny enough the counterexample for $G=\Z / 2 \Z$ is not explicit. There are two examples: One is an action on $\C^4$, the other one on a certain
$4$-dimensional manifold $Y$. If the first example is linearizable, then the manifold $Y$ is biholomorphic to $\C^4$ and the action on it is not linearizable. We will come back to this 
topic, when discussing the relation to famous open problems.

Of course we want to give the reader an impression how these counterexamples are constructed. Hereby we will limit  our presentation to the case of $G=\C^\star$. The main idea, originating from the work of Asanuma \cite{As}, is to use non-straightenable holomorphic embeddings of $\C^l$ into $\C^n$ to
produce an action on some $\C^N$ which cannot be linearizable by the following reason:

\smallskip\noindent
 {\sl The Luna quotient of this action on $\C^n$ is not isomorphic to the Luna quotient of any linear $G$-action on $\C^N$}.

\smallskip
Let us describe the method from \cite{DK1} and \cite{DK} (based on \cite{As}) to construct (non-linearizable) $\C^*$-actions on
affine spaces out of (non-straightenable) embeddings $\C^l\hookrightarrow \C^n$. At the same time we want to present a strengthening of the original method to a parametrized version. It will turn out  that if the embeddings are holomorphically parametrized, then the resulting $\C^*$-actions depend holomorphically on the parameter. Theorem \ref{mainembedding} will give us then the following quite striking  result due to Lodin and the author \cite{Kutzschebauch-Lodin}, a whole family of counterexamples
to linearization:

\begin{Thm} \label{Lodin}

For any $n\ge 5$ there is a holomorphic family of $\C^*$-actions on $\C^n$ parametrized by $\C^{n-4}$
$$\C^{n-4} \times \C^* \times \C^n \to \C^n \quad(w, \theta, z) \mapsto \theta_w (z)$$ so that for
different parameters $w_1\neq w_2\in \C^{n-4}$ there is no equivariant isomorphism between the actions $\theta_{w_1}$ and  $\theta_{w_2}$.

Moreover for $n\ge 5$ there are such $\C^*$-actions on $\C^n$  parametrized by $\C$ with the additional property that $\theta_0$ is a linear action.
\end{Thm}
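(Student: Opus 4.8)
The plan is to run the Asanuma--Derksen--Kutzschebauch construction in a parametrized fashion and then feed it the family of embeddings from Theorem~\ref{mainembedding}. I would first recall the mechanism of \cite{DK1,DK}: one starts from the model $\C^\ast$-action of Example~\ref{nonaction}, whose Luna quotient is a coordinate space in which the distinguished Luna stratum (the one with $\Z/2\Z$-isotropy) is a \emph{linear} subspace, and one ``twists'' it by an embedding $\sigma\colon\C^l\hookrightarrow\C^m$ so as to obtain a $\C^\ast$-action $\theta_\sigma$ on $\C^n$ (for the dimension triple $(l,m,n)$ dictated by the model) whose Luna quotient is again an affine space, but in which the distinguished stratum is now a copy of the \emph{image} $\sigma(\C^l)$, embedded inside a coordinate subspace $\cong\C^m$ of the quotient exactly the way $\sigma$ prescribes, all other strata remaining linear. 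Two points here, both handled in the cited papers, I would only recall: that the twisting is by (families of) automorphisms, so the total space of $\theta_\sigma$ is genuinely $\C^n$; and that \emph{all} data entering the construction --- the functions cutting out $\sigma(\C^l)$, the twisting maps, the action --- depend holomorphically on whatever $\sigma$ does. Hence a holomorphic family $\{\psi_w\}_{w\in\C^k}$ of embeddings $\C^l\hookrightarrow\C^m$ produces a single $\C^\ast$-action on $\C^k\times\C^n$ over the projection to $\C^k$, i.e.\ a holomorphic family $\{\theta_w\}_w$ of $\C^\ast$-actions on $\C^n$.

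The heart of the proof is the rigidity statement: for $w_1\neq w_2$, an equivariant isomorphism $\theta_{w_1}\cong\theta_{w_2}$ would force $\psi_{w_1}$ and $\psi_{w_2}$ to be equivalent in the sense of Definition~\ref{def-eq-emb}. One direction is formal --- by functoriality of the categorical quotient and the holomorphic slice theorem (Theorem~\ref{thm:holomorphic.slice}), which makes the Luna stratification intrinsic, any equivariant biholomorphism of $\C^n$ intertwining $\theta_{w_1}$ and $\theta_{w_2}$ descends to a biholomorphism $\bar\Phi$ of the Luna quotients carrying Luna strata onto Luna strata. The real work is to read off an equivalence of embeddings from $\bar\Phi$: one must show that $\bar\Phi$ preserves the distinguished stratum, that it carries the coordinate subspace $\cong\C^m$ containing that stratum onto the corresponding subspace (here one uses the precise shape of the model action, together with Fact~1 and Fact~2, to characterize this $\C^m$ intrinsically via the product structure of the neighbouring strata), and hence that it restricts to an automorphism of $\C^m$ taking $\psi_{w_1}(\C^l)$ onto $\psi_{w_2}(\C^l)$; composing with the biholomorphisms $\psi_{w_i}\colon\C^l\to\psi_{w_i}(\C^l)$ then produces the required automorphisms of source and target. \emph{This rigidity argument, made uniform in the parameter, is the step I expect to be the main obstacle}; the rest is bookkeeping. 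Granting it, non-equivalence of $\psi_{w_1}$ and $\psi_{w_2}$ (Theorem~\ref{mainembedding}) immediately gives non-equivalence of $\theta_{w_1}$ and $\theta_{w_2}$; moreover, were $\theta_w$ conjugate to a linear $\C^\ast$-action, its Luna quotient would be isomorphic as a stratified space to that of a linear action --- whose distinguished stratum is linearly embedded --- and the same argument would force $\psi_w$ to be straightenable.

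It remains to fix the dimension count and the last clause. Taking $l$ equal to the fixed value dictated by the model (so that $n-m$ is a fixed constant), the parameter space $\C^{m-l-1}$ in Theorem~\ref{mainembedding} becomes precisely the required $\C^{n-4}$; thus for every $n\geq 5$ that theorem supplies a holomorphic family $\{\psi_w\}_{w\in\C^{n-4}}$ of pairwise non-equivalent embeddings $\C^l\hookrightarrow\C^m$, and the construction converts it into the asserted family $\{\theta_w\}_{w\in\C^{n-4}}$ of pairwise non-equivariant $\C^\ast$-actions on $\C^n$. For the ``moreover'' part, one normalizes the construction of \cite{Kutzschebauch-Lodin} so that the parameter $0$ corresponds to the standard linear embedding $\psi_0$ of $\C^l$ into $\C^m$; then $\theta_0$ is the untwisted, hence linear, action, while $\psi_w$ for $w\neq 0$ is non-equivalent to $\psi_0$, hence non-straightenable, so by the last remark of the previous paragraph $\theta_w$ is genuinely non-linearizable. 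Restricting the $\C^{n-4}$-family to any complex line through $0$ then yields a holomorphic family over $\C$ of pairwise non-equivariant $\C^\ast$-actions on $\C^n$ with $\theta_0$ linear, which is possible exactly when $n-4\geq 1$.
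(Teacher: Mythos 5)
Your proposal follows essentially the same route as the paper: the parametrized Asanuma modification (Lemmas \ref{straight} and \ref{fam}), the rigidity argument reading off an equivalence of embeddings from the induced stratified isomorphism of Luna quotients (fixed-point stratum inside the $\Z/2\Z$-isotropy stratum recovering $\psi_w(\C^l)\subset\C^m$), and Theorem \ref{mainembedding} with $l=1$ to match the dimension count $n-l-1=(n+l+1)-4$. The step you flag as the main obstacle is exactly the paper's argument, and for the ``moreover'' clause the paper likewise defers to \cite{Kutzschebauch-Lodin} (the ``contraction of the parameter space'' trick), so there is nothing substantively different to report.
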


Let's go through the method:
For an embedding $\varphi: \C^l \to \C^n$ take generators of the ideal $I_{\varphi (\C^l)} < \cO (\C^n)$ of the image manifold, say 
$f_1, \ldots, f_N \in \cO ({\C^n)}$ (in this case $N= n-l$ would be sufficient, since $\C^l$ is always a
complete intersection in $\C^n$ by results of Forster and Ramspott \cite{FoRa}, but this is not important for the construction) and consider the manifold

\begin{multline*}
M:= \{ (z_1, \ldots, z_n, u_1,\ldots u_N, v) \in \C^{n+N+1} : 
f_i (z_1, \ldots, z_n) = u_i \ v      \quad \forall \ i=1, \ldots, N \}
\end{multline*}

\noindent
which in \cite{DK1} is called Rees space. This notion was introduced there by the authors since they were not aware of the fact that this is a well-known construction, called affine modification, going back to Oscar Zariski. Geometrically the manifold $M$ results from $\C^{n+1}_{z, v}$ by blowing up along the center $\mathcal{C} = \varphi (\C^l) \times 0_v$ and deleting the proper transform of the divisor
$\mathcal{D} = \{ v = 0\}$. Since our center is not algebraic but analytic,  the process usually is called
pseudo-affine modification.

Let's denote the constructed manifold $M$ by $Mod (\C^{n+1}, \mathcal{D}, \mathcal{C})  = 
Mod( \C^{n+1}_{z, v}, \{v = 0\}, \varphi (\C^l)\times \{v=0\})$. It's clear from the geometric description that
the resulting manifold does not depend on the choice of generators for the ideal $I_\mathcal{C}$ of the center. 
The important fact about the above modifications is that 

\smallskip\noindent
$$Mod( \C^{n+1}_{z, v}, \{v = 0\}, \varphi (\C^l)\times \{v=0\}) \times \C^l$$ is biholomorphic to $$\C^{n+l+1}
\cong Mod (\C^{n+l+1}_{z, u, v}, \{ v=0\}, \varphi(\C^l) \times 0_u \times 0_v).$$ The later biholomorphism
comes from the fact that there is an automorphism  of $\C^{n+l+1}$ leaving the divisor $\{ v= 0\}$ invariant
and straightening the center $\varphi(\C^l) \times 0_v$ inside the divisor (see Lemma 2.5. in \cite{DK1}). This is the so called
Asanuma trick. Let's present  this important fact with holomorphic dependence  on a parameter.

\begin{Lem}\label{straight}
Let $\Phi_1 : \C^k \times X \hookrightarrow \C^k \times \C^n$, $\Phi_1 (w, x) = (w, \varphi_1 (w, x))$ and
$\Phi_2 : \C^k \times X \hookrightarrow \C^k \times \C^m$, $\Phi_2 (w, x) = (w, \varphi_2 (w, x))$ be two holomorphic families of proper holomorphic embeddings of a complex space $X$ into $\C^n$ resp. $\C^m$ parametrized by $\C^k$. Then there is an automorphism $\alpha$ of $\C^{n+m}$ parametrized by $\C^k$, i.e., $\alpha \in \Aut_{\rm hol} (\C^k_w \times \C^{n+m}_z)$ with $\alpha (w, z) = (w, \tilde \alpha (w, z))$, such that $\alpha \circ (\Phi_1 \times 0_m) = 0_n \times \Phi_2$.
\end{Lem}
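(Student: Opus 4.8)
The plan is to straighten the first embedding onto the \emph{graph} embedding $(w,x)\mapsto(w,\varphi_1(w,x),\varphi_2(w,x))$ of $\C^k\times X$ into $\C^k\times\C^{n+m}$, do the same with the second embedding, and then take $\alpha$ to be the composition of the two resulting shear-type automorphisms, each of which only moves the coordinates that were freshly added. Write a point of $\C^k\times\C^{n+m}$ as $(w,z,u)$ with $w\in\C^k$, $z\in\C^n$, $u\in\C^m$, so that $\Phi_1\times 0_m$ is $(w,x)\mapsto(w,\varphi_1(w,x),0_m)$ and $0_n\times\Phi_2$ is $(w,x)\mapsto(w,0_n,\varphi_2(w,x))$.

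First I would produce two auxiliary holomorphic maps. Since $\Phi_1$ is a proper holomorphic embedding, its image $\Phi_1(\C^k\times X)$ is a closed complex subspace of $\C^k\times\C^n$ and $\Phi_1$ is a biholomorphism onto it; hence $\varphi_2\circ\mathrm{pr}_X\circ\Phi_1^{-1}$ is a well-defined holomorphic map on $\Phi_1(\C^k\times X)$. By Cartan's extension theorem on the Stein space $\C^k\times\C^n$ it extends to a holomorphic map $F\colon\C^k\times\C^n\to\C^m$ with $F(w,\varphi_1(w,x))=\varphi_2(w,x)$ for all $(w,x)$; symmetrically, using $\Phi_2$, one gets a holomorphic $G\colon\C^k\times\C^m\to\C^n$ with $G(w,\varphi_2(w,x))=\varphi_1(w,x)$. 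Now set $\sigma_1(w,z,u)=(w,z,u+F(w,z))$ and $\sigma_2(w,z,u)=(w,z+G(w,u),u)$. Both lie in $\Aut_{\rm hol}(\C^k_w\times\C^{n+m})$, fix $w$, and are globally invertible with inverses of the same shape. Putting $\alpha:=\sigma_2^{-1}\circ\sigma_1$ one computes, using the two defining identities of $F$ and $G$, that $\alpha(w,\varphi_1(w,x),0)=\sigma_2^{-1}(w,\varphi_1(w,x),\varphi_2(w,x))=(w,0,\varphi_2(w,x))$, which is precisely $\alpha\circ(\Phi_1\times 0_m)=0_n\times\Phi_2$.

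The only genuine content is the existence of $F$ and $G$, and the point I would stress is that one should not argue fiberwise in $w$ and then worry about holomorphic dependence of the extensions on the parameter: posing the extension problem directly on the closed subspace $\Phi_1(\C^k\times X)$ of the Stein space $\C^k\times\C^n$ builds the parameter into Cartan's theorem for free, so the parametrized statement costs no more than the classical Asanuma trick (the case $k=0$; cf.\ Lemma~2.5 of \cite{DK1}). The routine points left to check are that properness of $\Phi_1$ (resp.\ $\Phi_2$) really does make the image a closed complex subspace and $\varphi_2\circ\mathrm{pr}_X\circ\Phi_1^{-1}$ genuinely holomorphic on it, and that $\sigma_1$ and $\sigma_2$ are $w$-parametrized holomorphic automorphisms of $\C^{n+m}$ with holomorphic inverses — both of which are immediate from the explicit formulas.
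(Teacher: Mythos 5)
Your proof is correct and coincides with the paper's own argument: the paper likewise extends $\varphi_2$ off the image of $\Phi_1$ and $\varphi_1$ off the image of $\Phi_2$ via Cartan's Theorem B on the ambient Stein space (so the parameter dependence is automatic), forms the same two shear-type parametrized automorphisms, and takes $\alpha=\alpha_2^{-1}\circ\alpha_1$, which is exactly your $\sigma_2^{-1}\circ\sigma_1$.
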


\begin{proof}
By an application of Theorem B the holomorphic map $\varphi_1 : \C^k \times X $ to $ \C^n$ extends to a holomorphic map $\mu_1$ from $\C^k \times \C^m \supset \Phi_2 (\C^k \times X)$ to $\C^n$ (so $\mu_1\circ \varphi_2 = \varphi_1$). Likewise there is a holomorphic map $\mu_2 : \C^k\times \C^n \to \C^m$
with $\mu_2 \circ \varphi_1 = \varphi_2$. Define the parametrized automorphisms $\alpha_1, \alpha_2$
of $\C^k\times \C^n\times \C^m$ by $\alpha_1 (w, z, y) = (w, z, y+ \mu_2 (w, z))$  and 
$\alpha_2 (w, z, y) = (w, z + \mu_1 (w, y), y)$. Now $\alpha = \alpha_2^{-1} \circ \alpha_1$ is the desired 
automorphism.
\end{proof}

\begin{Lem} 
\label{fam}
Let $\Phi : \C^k \times \C^l \hookrightarrow \C^k \times \C^n$ $\Phi (w, \theta) = (w, \varphi (w, \theta))$ be a holomorphic family of proper holomorphic embeddings of $\C^l$ into $\C^n$ parametrized by $\C^k$.

Then $Mod( \C^{k+n+1}_{w, z, v}, \{ v=0\}, \Phi (\C^k\times \C^l )\times \{ v=0 \}) \times \C^l \cong \C^{k+n+l+1}$. Moreover there is a  biholomorphism such that the restriction to each fixed parameter  $w\in \C^k$ is a biholomorphism from $ Mod( \C^{n+1}_{ z, v}, \{v=0\}, \Phi(\{w\}\times \C^l)\times \{v=0\}) \times \C^l \cong \C^{n+l+1}$.
\end{Lem}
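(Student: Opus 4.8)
The plan is to carry the parameter $w\in\C^k$ through the (non-parametrized) Asanuma trick, feeding in at the one nontrivial point the \emph{parametrized} straightening already isolated in Lemma \ref{straight}. First I would fix finitely many global holomorphic functions $F_1,\dots,F_M\in\cO(\C^k_w\times\C^n_z)$ generating the ideal of the closed submanifold $\Phi(\C^k\times\C^l)$; such a finite generating set exists since the ideal sheaf of a closed submanifold of a Stein manifold is generated by finitely many global sections. With the Rees description of the pseudo-affine modification,
$$\mathcal M:=Mod\bigl(\C^{k+n+1}_{w,z,v},\{v=0\},\Phi(\C^k\times\C^l)\times\{v=0\}\bigr)=\{(w,z,U,v)\in\C^{k+n+M+1}:F_j(w,z)=U_jv,\ j=1,\dots,M\},$$
and the projection $(w,z,U,v)\mapsto w$ exhibits $\mathcal M$ as a family over $\C^k$. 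Since $\Phi(\C^k\times\C^l)$ is a submanifold mapping submersively onto $\C^k$, the restrictions $F_1(w_0,\cdot),\dots,F_M(w_0,\cdot)$ generate the ideal of the fibre $\varphi_{w_0}(\C^l)\subset\C^n$ (this is the set the Lemma denotes $\Phi(\{w_0\}\times\C^l)$), so the fibre of $\mathcal M$ over $w_0$ is $Mod(\C^{n+1}_{z,v},\{v=0\},\varphi_{w_0}(\C^l)\times\{v=0\})$. All the biholomorphisms constructed below will respect the projections to $\C^k$, and this is exactly what yields the ``moreover'' clause.

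\emph{Step 1: reintroduce an $\C^l$.} A direct elimination of variables gives a biholomorphism over $\C^k$,
$$\mathcal M\times\C^l_p\ \xrightarrow{\ \sim\ }\ Mod\bigl(\C^{k+n+l+1}_{w,z,u,v},\{v=0\},\Phi(\C^k\times\C^l)\times 0_u\times 0_v\bigr),\qquad (w,z,U,v,p)\longmapsto(w,z,pv,v,U,p).$$
Indeed the target has centre with ideal $(F_1,\dots,F_M,u_1,\dots,u_l,v)$, hence equals $\{(w,z,u,v,s,p):F_j(w,z)=s_jv,\ u_i=p_iv\}$; solving $u_i=p_iv$ removes the coordinates $u$ and identifies the result with $\{F_j(w,z)=s_jv\}=\mathcal M\times\C^l_p$.

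\emph{Step 2: parametrized straightening, and Step 3.} I would apply Lemma \ref{straight} with base $\C^k_w$, with $X=\C^l$, with $\Phi_1=\Phi\colon\C^k\times\C^l\hookrightarrow\C^k\times\C^n$, and with $\Phi_2\colon\C^k\times\C^l\hookrightarrow\C^k\times\C^l$ the trivial family $(w,\theta)\mapsto(w,\theta)$. This produces $\alpha\in\Aut_{\rm hol}(\C^k_w\times\C^n_z\times\C^l_u)$ over $\C^k$ with $\alpha(w,\varphi(w,\theta),0)=(w,0,\theta)$, i.e.\ $\alpha$ maps $\Phi(\C^k\times\C^l)\times 0_u$ biholomorphically onto $\C^k_w\times 0_z\times\C^l_u$. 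Extending $\alpha$ by the identity in $v$ yields $\widehat\alpha\in\Aut_{\rm hol}(\C^{k+n+l+1})$ over $\C^k$ which preserves $\{v=0\}$ and carries the centre $\Phi(\C^k\times\C^l)\times 0_u\times 0_v$ onto $\C^k_w\times 0_z\times\C^l_u\times 0_v$. Since the pseudo-affine modification depends only on the divisor and the ideal of the centre, it is functorial for ambient biholomorphisms preserving the divisor, so $\widehat\alpha$ induces a biholomorphism over $\C^k$ between the corresponding modifications. In Step 3 the straightened centre $\C^k_w\times 0_z\times\C^l_u\times 0_v$ has ideal $(z_1,\dots,z_n,v)$, so its modification equals $\{(w,z,u,v,s):z_i=s_iv\}$, and eliminating $z$ identifies it, over $\C^k$, with $\C^k_w\times\C^l_u\times\C_v\times\C^n_s\cong\C^{k+n+l+1}$.

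Composing Steps 1--3 gives a biholomorphism $\mathcal M\times\C^l\xrightarrow{\ \sim\ }\C^{k+n+l+1}$ over $\C^k$; restricting to the fibre over $w$ recovers precisely the non-parametrized Asanuma trick $Mod(\C^{n+1}_{z,v},\{v=0\},\varphi_w(\C^l)\times\{v=0\})\times\C^l\cong\C^{n+l+1}$, which proves the Lemma. The computations in Steps 1 and 3 are routine variable elimination and Step 2 is a direct invocation of Lemma \ref{straight}; I expect the only genuine obstacle to be the bookkeeping for pseudo-affine modifications \emph{in families}, namely that restricting a modification to a parameter value produces the modification of the fibre (which uses flatness, i.e.\ that $\Phi(\C^k\times\C^l)\to\C^k$ is a submersion, so that the chosen generators restrict to generators of the fibre ideal) and that the construction is functorial for divisor-preserving automorphisms of the ambient space. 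Once this formalism is in place, the proof is the explicit chain above.
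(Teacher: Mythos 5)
Your proposal is correct and follows essentially the same route as the paper: identify $\mathcal M\times\C^l$ with the modification of $\C^{k+n+l+1}$ along the centre crossed with $0_l$, apply Lemma \ref{straight} to $\Phi$ and the trivial family, extend the resulting parametrized automorphism trivially in $v$, and observe that the modification along the straightened centre is affine space. Your extra explicitness (coordinate elimination in Steps 1 and 3, and the remark that the generators restrict to generators of each fibre ideal) only spells out what the paper states more briefly.
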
 

\begin{proof}
Apply Lemma \ref{straight} to the families $\Phi_1 = \Phi$ and $\Phi_2$ the trivial family
$\Phi_2 : \C^k \times \C^l \hookrightarrow \C^k\times \C^l$ $\Phi_2 (w, \theta) = (w, \theta)$. Let $\alpha
\in \Aut_{\rm hol} (\C^k \times \C^n \times \C^l)$ be the resulting parametrized automorphism which we extend to $\C^{k+n+l+1}$ by letting it act trivial on the last coordinate $v$. Then by definition
$Mod( \C^{k+n+1}_{w, z, v}, \{ v=0\}, \Phi (\C^k\times \C^l )\times \{ v=0 \}) \times \C^l = 
Mod( \C^{k+n+l+1}_{w, z,\theta,  v}, \{ v=0\}, \Phi (\C^k\times \C^l )\times \{ v=0 \} \times 0_l)$ and applying (the extended) $\alpha$ we get that the later is biholomorphic to
$Mod( \C^{k+n+l+1}_{w, z,\theta,  v}, \{ v=0\}, \C^k_w \times 0_n \times \C^l_\theta \times \{ v=0 \} )$. The last manifold is obviously biholomorphic to $\C^{k+n+l+1}$ since blowing up along a straight
center and deleting the proper transform of a straight divisor does not change the affine space.
The above constructed biholomorphism restricts to each fixed parameter as desired since $\alpha$ is
a parametrized automorphism. This can be also seen by writing down concrete formulas for the modifications using generators $f_1 (w,z), \ldots, f_N(w, z)$ of the ideal $I_{\phi(\C^k\times\C^l)}$ in
$\cO (\C^{k+n})$  and remarking that for each fixed $w \in \C^k$ the functions $f_1 (w,\cdot), \ldots, f_N(w, \cdot )$ generate the ideal $I_{\Phi_w (\C^l)}$.
\end{proof}

Now we describe the group actions:

\noindent
Let  $f_1 (w,z), \ldots, f_N(w, z)$ be generators of the ideal $I_{\phi(\C^k\times\C^l)}$ in
$\cO (\C^{k+n})$ and consider $Mod( \C^{k+n+1}_{w, z, v}, \{ v=0\}, \Phi (\C^k\times \C^l )\times \{ v=0 \}) \times \C^l \cong \C^{k+n+l+1}$ as the affine manifold given by equations:

\begin{multline*} 
\{ (w, z, v, u) \in \C^k\times \C^n\times \C\times \C^N :
f_i (w, z) = u_i \ v      \quad \forall \ i=1, \ldots, N \} \times \C^l_x
\end{multline*} 

On it we consider the action of $\C^*_\nu$ given by the restriction of the following linear action on the ambient space:

\begin{multline}
\C^* \times \C^k \times \C^n \times \C \times \C^N \times \C^l \to \C^k \times \C^n \times \C \times \C^N \times \C^l \\(\nu, (w, z, v, u, x)) \mapsto (w, z, \nu^2 v, \nu^{-2} u_1, \ldots ,\nu^{-2} u_N, \nu x_1, \ldots, \nu x_l)
\end{multline}

This gives by Lemma \ref{fam} a holomorphic family of $\C^*$-actions on $\C^{n+l+1}$ parametrized by $\C^k$, i.e., an action
$\C^* \times \C^k \times \C^{n+l+1} \to \C^k \times \C^{n+l+1}$ of the form $(\nu (w, z)) \mapsto (w, \nu (w, z))$. Calculating (as in \cite{DK}) the Luna-stratification of the categorical quotient $\C^{n+l+1}/  \hspace{-3 pt} / \C^*$ for the $\C^*$-action  for fixed $w$, in particular the inclusion of the fixed point stratum in the $\Z/2\Z$-isotropy stratum one sees that this inclusion is biholomorphic to $\Phi_w (\C^l) \subset \C^n$. 
The reader is invited to do this calculation using Example \ref{nonaction} and Fact 1 from section \ref{basics}.
Thus if
for different parameters $w_1 \ne w_2$ there were an equivariant automorphism $\alpha \in \Aut_{\rm hol} (\C^n)$, the induced isomorphism of the categorical quotients would map the Luna-stratifications onto each other. Therefore the restriction of that induced isomorphism to the $\Z/2\Z$-isotropy stratum would
give an automorphism $\beta$ of $\C^n$ with $\beta (\Phi_{w_1} (\C^l)) = \Phi_{w_2} (\C^l)$. This shows
that pairwise non-equivalent embeddings lead to non-equivalent $\C^*$-actions. This concludes the proof of Theorem \ref{Lodin} except for the moreover part. 
The proof of this fact is a simple trick of contracting the parameter space. We refer the reader to \cite{Kutzschebauch-Lodin}. Also it is clear from the above discussion that one can construct uncountably many non-linearizable $\C^\star$-actions on  $\C^4$ using the last assertion from Theorem \ref{mainembedding}.

\subsection{Relation to famous problems} \label{relations}
The Holomorphic Linearization Problem is connected to famous problems about complex affine space $\C^n$. The first one is the holomorphic version
of the Zariski Cancellation Problem, a problem which is still open in both the algebraic category over $\C$ and in the holomorphic category.

\begin{Prob}[Zariski Cancellation] \label{Zariski}
If $X$ is a complex manifold such that $X\times \C$ is biholomorphic to $\C^{n+1}$.
Does it follow that $X$ is biholomorphic to $\C^n$?
\end{Prob}

There is an easy connection to Linearization since if we had a counterexample $X$ to the Zariski Cancellation Problem, say $\dim X =n $ then on $\C^{n+1} \cong X\times \C$
the $\Z/2\Z$-action given by $\Z/2\Z \times (X\times \C) \to X\times\C$, $(\sigma, (x, t)) \mapsto (x, -t)$ would have a fixed point set $X$ which would not be biholomorphic to
an affine space. Clearly fixed point sets of linear actions are affine spaces, thus the $\Z/2\Z$-action would be non-linearizable. Another less obvious connection comes from the
construction of our counterexamples, the Asanuma trick. Set $X=  Mod( \C^{n+1}_{ z, v}, \{v=0\}, \Phi( C^l)\subset \{v=0\})$ for a non-straightenable holomorphic embedding $\Phi : \C^l \to
 \C^n$. By Lemma \ref{fam} 
$X \times \C^l \cong \C^{n+l+1}$. If $X$ were biholomorphic to $\C^{n+1}$ we would have non-linearizable actions in lower dimensions. But if $X$ were not biholomorphic to $\C^{n+1}$
there would be a counterexample to Zariski Cancellation.

Here is a connection to another well known problem, formulated as a conjecture by Varolin and Toth.
\begin{Prob}[Varolin-Toth-Conjecture] \label{VTconjecture} If a Stein manifold $X$ is diffeomorphic to $\R^{2n}$ ($n\ge 2$) and has the density property, 
is  $X$ then biholomorphic to $\C^n$?
\end{Prob}

Again our $X=  Mod( \C^{n+1}_{ z, v}, \{v=0\}, \Phi(\{\C^l)\subset \{v=0\})$ is the candidate. From $(3)$ in our list of examples in section \ref{examples} we see that it has the density property.
Also it can be proven that $X$ is diffeomorphic to $\R^{2n+2}$, see \cite{KaKu2}. Were it not biholomorphic to affine space, we would have a counterexample to the Varolin-Toth-Conjecture. There are more candidates for counterexamples to this conjecture. A famous one is the Koras-Russell threefold from equation \eqref{KR}, which is well known to be diffeomorphic to $\R^6$
and has the density property, see $(5)$ in our list of examples in section \ref{examples}.

\subsection{Optimal positive results}
The way of constructing counterexamples to the Holomorphic Linearization Problem was providing group actions on $\C^n$ whose Luna quotient is not isomorphic
to the Luna quotient of a linear action. This raises the following natural question:

\medskip\noindent
\textbf{Question 1:} 
If  the Luna quotient  of an action of a reductive group $G$ on $\C^n$ is biholomorphic to the quotient of a linear action, does it follow that the action is linearizable?

\medskip
In general one can replace $\C^n$ by arbitrary Stein manifolds. 

\medskip\noindent
\textbf{Question 2:} If two Stein $G$-manifolds have isomorphic Luna quotients, under which additional assumptions are they $G$-biholomorphic?

\medskip
The following example shows that there is at least a topological obstruction for this to hold true.

\begin{Exa} Take any Stein manifold  $M$ which admits 2 non-isomorphic holomorphic line bundles $\pi_1 : L_1 \to M$ and $\pi_2 : L_2\to M$. Remember that   by Oka principle  $H^2(M, \Z)$ parametrizes the 
holomorphic line bundles on $M$, so this cohomolgy group has to be non-trivial. Consider the action of $\C^\star$ on the total spaces $X=L_1$ and  $Y=L_2$ of the line bundles by fibre wise multiplication. 
The categorical quotient maps for these actions are just the bundle projections.  Both categorical quotients are just isomorphic to $M$.The fibers  of the categorical quotient maps consist of 2 orbits, a fixed point (contained in the zero section of the bundle) and the rest of the line, a free $\C^\star$ orbit. The Luna stratification is trivial, with one stratum $M$. 
A $\C^\star$-equivariant biholomorphism between $L_1$ and $L_2$ is linear in the fibers of the line bundle, thus a bundle isomorphism. But the line bundles are non-isomorphic. The obstruction 
to an equivariant biholomorphism under the existence of a Luna biholomorphism in this
example is purely topological: $H^2(M, \Z)$.
\end{Exa}

We are now going to present recent results on the above two questions obtained  by L\'arusson, Schwarz and the author, all results are from the papers \cite{KLS1}, \cite{KLS2}, \cite{KLS3}.
The setting is as follows: Let  $X$ and $Y$ be  Stein manifolds  on which $G$ acts holomorphically.  
 We have quotient mappings $p_X\colon X\to Q_X$ and $p_Y\colon Y\to Q_Y$ where $Q_X$ and $Q_Y$ are normal Stein spaces, the categorical quotients of $X$ and $Y$
 Suppose there is a biholomorphism $\phi\colon Q_X\to Q_Y$ which preserves the Luna strata, i.e., $X_q$ is $G$-biholomorphic to $Y_{\phi(q)}$ for all $q\in Q_X$.  We say that \emph{$X$ and $Y$ have common quotient $Q$}.

Set
$$
\Iso(X,Y)=\prod_{q\in Q}\Iso(X_q,Y_q)
$$ 
where $\Iso(X_q,Y_q)$ denotes the set of $G$-biholomorphisms of $X_q$ and $Y_q$. In general, there is no reasonable structure of complex variety on $\Iso(X,Y)$ as examples in \cite{KLS2} show.

Let $\Phi\colon X\to Y$  be a $G$-diffeomorphism inducing the identity on the quotient.
$\Phi$ is  called {\bf strict} if it induces a $G$-biholomorphism of
 $(X_q)_{red}$ with $(Y_q)_{red}$  for all $q\in Q$.  One can think of the strict $G$-diffeomorphisms as  ``smooth sections of $\Iso(X,Y)$" (although the later has no good structure).

\begin{Thm} \label{KLSmain}
Let $X$ and $Y$ be Stein $G$-manifolds with common quotient $Q$. Suppose that there is a strict $G$-diffeomorphism $\Phi\colon X\to Y$. Then $\Phi$ is homotopic, through strict $G$-diffeomorphisms, to a $G$-biholomorphism from $X$ to $Y$.
\end{Thm}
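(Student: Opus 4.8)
The plan is to run an Oka-principle argument \emph{over the quotient} $Q$, using the Equivariant Oka Principle (Theorem \ref{thm:HK-classification}) as the local model on Luna strata and patching with the parametrized methods of \cite{KLS1}, \cite{KLS2}. The key structural input is the Luna slice theorem (Theorem \ref{thm:holomorphic.slice}): over a saturated Stein neighbourhood $U$ of a closed orbit $K^\C x$, the manifold $X$ is $K^\C$-equivariantly identified with an open subset of the normal bundle $N=K^\C\times^{L^\C}V$, and similarly $Y$ with an open subset of a bundle built from the slice representation $V'$ of $Y$. Since $\Phi$ is strict, it induces a $G$-biholomorphism of the reduced fibers; restricting to the closed orbit in each fiber, this forces the slice representations to agree, $V\simeq V'$, so locally over $Q$ the two manifolds are modeled on the \emph{same} bundle data, and the ``difference'' between $X$ and $Y$ is measured by a section of the sheaf of fiberwise $G$-biholomorphisms $\Iso(X,Y)\to Q$ — an analogue of the isomorphism bundle $\Iso(P_1,P_2)$ in Theorem \ref{thm:HK-classification}.

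First I would set up the problem stratum by stratum, starting from the open (principal) Luna stratum, where the fibers are single closed orbits $G/L_0$ and $\Iso(X,Y)$ over that stratum is (essentially) a holomorphic fiber bundle with fiber a complex Lie group; here the Heinzner--Kutzschebauch Equivariant Oka Principle applies almost verbatim to deform the strict $G$-diffeomorphism, over the Kempf-Ness set, to a holomorphic one. Then I would proceed by a descending induction on the stratification (equivalently, on a filtration of $Q$ by the closed ``bad sets'' of lower strata), at each step extending a $G$-biholomorphism already constructed over a neighbourhood of the lower-dimensional strata to the next stratum. Each extension step is again an Oka-type problem: a continuous/smooth strict $G$-diffeomorphism that is already holomorphic near a closed analytic subset must be homotoped, rel that subset and through strict $G$-diffeomorphisms, to one holomorphic on a larger saturated set. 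For this I would use the slice-theorem local models (so the problem becomes one about sections of $\Iso$ of normal-bundle data, i.e.\ about maps into homogeneous-fiber bundles, where Gromov--Forstneri\v c Oka theory and the stratified Oka principle of \cite{Forstratified} apply) and glue local holomorphic corrections by the usual Cartan-type splitting over Stein base, using that $Q$ is a normal Stein space and the neighbourhoods are Runge.

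The main obstacle — and the technical heart of the proof — is that $\Iso(X,Y)$ carries \emph{no reasonable complex structure} in general (as the examples in \cite{KLS2} show), so one cannot simply quote an Oka principle for sections of a fixed holomorphic fiber bundle over $Q$. One must instead work on $X$ and $Y$ directly, control the behaviour of the deforming family along the non-reduced and lower-dimensional fibers, and keep strictness throughout the homotopy; the gluing of local deformations coming from the slice charts must be compatible on overlaps, which is where the Kempf-Ness set and the averaging over $K$ (exactly as in the proof of Theorem \ref{thm:HK-classification}) are essential to get $K$-equivariant, hence $K^\C$-equivariant, corrections. A secondary point to watch is the interaction of the induction over strata with the topological obstructions illustrated by the line-bundle example preceding Theorem \ref{KLSmain}: the hypothesis that a strict $G$-diffeomorphism \emph{exists} is precisely what kills the topological obstruction on each stratum, so the inductive step never gets stuck — but one has to verify that the deformations built over lower strata do not reintroduce an obstruction on the next one, which is handled by doing everything relative to a neighbourhood of the already-treated set.
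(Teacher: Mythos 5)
Your overall strategy --- local models from the Luna slice theorem (Theorem \ref{thm:holomorphic.slice}), an Oka principle over the quotient $Q$ to glue, and the strict $G$-diffeomorphism as the hypothesis that removes the obstructions --- is the same circle of ideas as the actual proof, but the decomposition is different. The paper (following \cite{KLS1}, \cite{KLS2}, \cite{KLS3}) splits the argument into exactly two steps: first a \emph{local lifting} step, producing over small $U\subset Q_X$ a $G$-biholomorphism $p_X^{-1}(U)\to p_Y^{-1}(\phi(U))$ that actually induces $\phi$ on the quotient; second a single global gluing theorem (the ``locally $G$-biholomorphic over a common quotient'' Oka principle stated right after Theorem \ref{KLSmain}), rather than your descending induction over the Luna stratification. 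Your observation that the Equivariant Oka Principle (Theorem \ref{thm:HK-classification}) should suffice on the principal stratum is consistent with the paper's remark that for generic actions the gluing step reduces to it, while the general case is much more involved.

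The genuine gap is in your first move. You pass from ``the slice representations of $X$ and $Y$ over corresponding points agree'' to ``the difference between $X$ and $Y$ is measured by a section of $\Iso(X,Y)\to Q$''. That reduction presupposes that local lifts of $\phi$ exist, i.e., that the local slice-model $G$-biholomorphisms of $p_X^{-1}(U)$ with $p_Y^{-1}(\phi(U))$ can be chosen to descend to $\phi$ (equivalently, to the identity of $U$ after identifying the quotients). The slice theorem only gives you \emph{some} $G$-biholomorphism of these saturated neighborhoods; it may induce a different stratified automorphism of $U$, and correcting it is not automatic. This is precisely the step the paper warns ``may readily be overlooked'': it is non-trivial, and it is the first place where the strict $G$-diffeomorphism is used --- not merely to match slice data or to kill topological obstructions in the gluing, as in your proposal, but to manufacture the local holomorphic lifts themselves. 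Without this step your stratumwise induction has nothing to glue, and your later remark that one must work ``on $X$ and $Y$ directly'' because $\Iso(X,Y)$ carries no complex structure does not supply it.
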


We would like to comment on this theorem whose proof has two major steps. Our aim is to lift a Luna isomorphism $\phi$ between  $Q_X$ and $Q_Y$ in the diagram

$$\begin{matrix}   X & \xlongrightarrow{\makebox[2cm] {}} & Y  \\
         \ \ \ \   \downarrow{p_X} &                       &   \ \ \ \  \downarrow{p_Y}         \\
\ \  Q_X & \xlongrightarrow{\makebox[2cm]{ $\phi$ }} &  \ \  Q_Y \\ \end{matrix} $$
 
\noindent
to a $G$-equivariant biholomorphism between $X$ and $Y$ (under the assumption that there is a lift to a strict $G$-diffeomorphism). The first step which may readily be overlooked is the local lifting. For each $q \in Q_X$ and $\phi (q) \in Q_Y$
there are open neighborhoods  $U \subset Q_X$ and $V = \varphi (U)  \subset Q_Y$ whose preimages under the categorical quotient maps are described by the Luna slice theorem  Theorem \ref{thm:holomorphic.slice}. And since $\phi$ respects the Luna stratification, both $p_X^{-1} (U)$ and $p_Y^{-1} (V)$ are described by the same slice model and thus are $G$-biholomorphic.
The problem is that this existing $G$-biholomorphism need not to be a lift of $\phi$, it may induce another local Luna isomorphism between $U$ and $V$. Therefore our first step is non-trivial and
uses the additional information, namely the existence of a strict $G$-diffeomorphism. 

The second step is then  the gluing of the local lifts of the Luna isomorphism to a global lift. This is an Oka principle, which is a separate important result:

Suppose that we have a stratified biholomorphism $\phi\colon Q_X\to Q_V$ where $V$ is a $G$-module. Again we  identify $Q_X$ and $Q_V$ and call the common quotient $Q$. We have quotient mappings $p\colon X\to Q$ and $r\colon V\to Q$. Assume there is an open cover $\{U_i\}_{i\in I}$ of $Q$ and $G$-equivariant biholomorphisms $\Phi_i:p^{-1}(U_i)\to r^{-1}(U_i)$ over $U_i$ (meaning that $\Phi_i$ descends to the identity map of $U_i$).   We express the assumption by saying that $X$ and $V$ are \textit{locally $G$-biholomorphic over a common quotient}.   
Equivalently, our original $\phi\colon Q_X\to Q_V$ locally lifts to $G$-biholomorphisms of $X$ to $V$.

\begin{Thm} 
Let $X$ and $Y$ be Stein $G$-manifolds which locally $G$-biholomorphic over a  common quotient $Q$. Suppose that there is a strict $G$-diffeomorphism $\Phi\colon X\to Y$. Then $\Phi$ is homotopic, through strict $G$-diffeomorphisms, to a $G$-biholomorphism from $X$ to $Y$.
\end{Thm}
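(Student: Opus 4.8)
The plan is to prove this as an Oka principle of Grauert type for the sheaf on $Q$ whose sections over an open set $U$ are the $G$-equivariant biholomorphisms $p^{-1}(U)\to r^{-1}(U)$ lying over the identity of $U$. Call this presheaf $\mathcal{I}so$; by hypothesis it is locally non-empty (the $\Phi_i$ are local sections), and it is a torsor under the sheaf of groups $\mathcal{G}$ whose sections over $U$ are the $G$-equivariant biholomorphic automorphisms of $p^{-1}(U)$ over $U$. A global holomorphic section of $\mathcal{I}so$ is precisely a $G$-biholomorphism $X\to Y$ over $Q$, and the strict $G$-diffeomorphism $\Phi$ is, in effect, a global smooth section. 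So the statement to prove is that a smooth section of $\mathcal{I}so$ deforms, through smooth (strict) sections, to a holomorphic one. As a preliminary step I would use a partition of unity on $Q$ subordinate to $\{U_i\}$ to smoothly homotope $\Phi$ so that near each point it agrees with one of the holomorphic local lifts up to a $\mathcal{G}$-valued factor smoothly homotopic to the identity; this puts us in position to run a gluing scheme.

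Second, I would fix the right local models. Choose a Stein exhaustion $Q_1\Subset Q_2\Subset\cdots$ of the normal Stein space $Q$ and a locally finite Luna stratification of $Q$. Over a sufficiently small saturated neighborhood $U$ of a point, the Luna slice theorem (Theorem \ref{thm:holomorphic.slice}) identifies $p^{-1}(U)$ with a model built from a slice representation, and the same model describes $r^{-1}(U)$; in particular the ``Lie algebra'' of $\mathcal{G}$ over $U$, namely the $G$-invariant holomorphic vector fields along the fibers of $p$, is the module of sections over $U$ of a coherent analytic sheaf $\mathcal{T}$ on $Q$ (coherence coming from the slice model together with averaging over the maximal compact $K$ to produce equivariant objects from Cartan's Theorems A and B). This coherence on the Stein base $Q$ is what makes a Grauert-type argument possible despite the fibers of $\mathcal{G}$ being infinite-dimensional groups.

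Third, and this is the technical heart, I would prove a Cartan-type splitting lemma: if $(A,B)$ is a Cartan pair of compact $\cO(Q)$-convex subsets of $Q$ and $\gamma$ is a $G$-equivariant biholomorphism over a neighborhood of $A\cap B$, lying over the identity and sufficiently close to the identity in the natural Fr\'echet topology, then $\gamma=\beta\circ\alpha^{-1}$ for $G$-equivariant biholomorphisms $\alpha$ over a neighborhood of $A$ and $\beta$ over a neighborhood of $B$, each close to the identity and depending continuously on $\gamma$. The proof is the usual two-move argument: linearize to the additive Cousin problem for sections of the coherent sheaf $\mathcal{T}$, which splits with estimates because $Q$ is Stein (vanishing of $H^1$ plus bounded-inverse estimates, with equivariance enforced by $K$-averaging); then pass from the additive to the multiplicative splitting by a Newton-type iteration in the Fr\'echet setting. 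One also needs the matching parametric/homotopy version so that the whole deformation stays within strict $G$-diffeomorphisms.

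Finally, I would run the Grauert scheme. Induct simultaneously over the Luna strata of $Q$ (using the slice model to extend a lift defined near the closure of the lower-dimensional strata across the open part of the next stratum, where $p$ and $r$ are honest fiber bundles) and over the exhaustion $Q_k$: given a holomorphic $G$-biholomorphism over a neighborhood of $p^{-1}(Q_k)$ homotopic to $\Phi$ there, cover $\overline{Q_{k+1}}\setminus Q_k$ by finitely many sets carrying local holomorphic lifts and absorb the discrepancies one Cartan pair at a time via the splitting lemma; the given strict $\Phi$ guarantees that the transition automorphisms are homotopic to the identity, so the splitting lemma applies after a controlled smooth homotopy. Passing to the limit, with the corrections chosen summably small, yields a global $G$-biholomorphism $X\to Y$, and keeping track of the homotopies shows it is connected to $\Phi$ through strict $G$-diffeomorphisms. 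I expect the main obstacle to be exactly the splitting lemma of the third step: making the Cousin problem for the coherent equivariant sheaf split \emph{with estimates} and then lifting this to a splitting of near-identity automorphism \emph{groups}, all while organizing the reduction to local models compatibly with the Luna stratification via the slice theorem.
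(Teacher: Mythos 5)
This survey does not actually prove the theorem: it is quoted from \cite{KLS1} and \cite{KLS3}, and the only proof-relevant remark in the text is that for \emph{generic} actions (Definition \ref{large}) the statement can be deduced from the Equivariant Oka Principle for principal bundles (Theorem \ref{thm:HK-classification}), while ``the proof in the general case is much more involved.'' Your outline therefore does not follow the one route the paper indicates (reduction to a principal-bundle statement in the generic case); instead you go directly for a Grauert-type scheme on the sheaf over $Q$ of fibrewise $G$-equivariant isomorphisms, viewed as a torsor under the sheaf of groups of equivariant automorphisms over the quotient. That is, in spirit, the strategy of the cited original papers, and your identification of the key ingredients is sound: local models from the Luna slice theorem, coherence of the invariant direct image of the relative tangent sheaf on the Stein space $Q$, a Cartan splitting lemma with estimates, and an induction over a Stein exhaustion compatible with the Luna stratification.

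The genuine gap is that you have compressed essentially all of the difficulty into your third step and then asserted it. The sections of $\mathcal{G}$ over $U$ form an infinite-dimensional group that is not a Banach Lie group, so ``linearize to the additive Cousin problem and run a Newton iteration'' is not a routine adaptation of Cartan's lemma: you must actually produce an exponential map (integrating $G$-invariant vector fields tangent to the fibres of $p$, with completeness/time-one existence only near the identity over compacts), prove that the relevant invariant direct image sheaf is coherent, and obtain the splitting \emph{with estimates} uniformly near the identity --- and you must do this across strata where the fibres are non-reduced affine $G$-varieties, which is where the strictness hypothesis (biholomorphism only of the \emph{reductions} of the fibres) has to enter and be propagated through the homotopy. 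Two further points need repair: a partition of unity on $Q$ cannot be applied to group- or torsor-valued data to normalize $\Phi$ (convex combinations of automorphisms are not automorphisms; this normalization itself already requires the exponential/splitting machinery); and the claim that the strict $G$-diffeomorphism ``guarantees the transition automorphisms are homotopic to the identity'' is precisely the nontrivial input that must be extracted from strictness, not a free consequence of it. As it stands your text is a correct road map of where the proof must go, but the theorem is not proved until the splitting lemma for this infinite-dimensional equivariant setting is established.
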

 
In the case of a generic action (see Definition \ref{large} below) this theorem can be deduced from the Equivariant Oka Principle (Theorem \ref{thm:HK-classification}).  The proof in the general case is much more involved.

Together with the above  described first step this proves Theorem \ref{KLSmain}. The additional obstruction, besides  the Luna-isomorphism of the quotients, is the strict $G$-diffeomorphism.
We also have a result, where the obstruction is more topological (instead of smooth). This is the existence of a strong $G$-homeomorphism, which is the right version of 
``continuous sections of $\Iso(X,Y)$". Since the definition of strong $G$-homeomorphism is not so straightforward and the result includes an extra assumption on the common quotient we refer the interested reader to \cite{KLS2} for details.

Now let us get back to the special case of linearization, i.e., one of the $G$-manifolds, say $Y$, is a linear representation of $G$, i.e., a $G$-module. This of course gives immediately more information on the Luna quotient
and there is hope that the additional obstruction just disappears because of the simple topology or diffeomorphism type of $\C^n$. We have not been able to confirm this hope completely, but substantial results are proven. Remember that the proof of Theorem \ref{KLSmain} consisted of two steps, the first step, the local $G$-diffeomorphisms are the problem, the second step is solved:
 
\begin{Thm}
Suppose that $X$ is a Stein $G$-manifold, $V$ is a $G$-module and $X$ and $V$ are locally $G$-biholomorphic over a common quotient. Then $X$ and $V$ are $G$-biholomorphic.
\end{Thm}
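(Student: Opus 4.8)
\emph{The plan.} By the preceding theorem it suffices to produce a single strict $G$-diffeomorphism $\Phi\colon X\to V$ inducing the identity on the common quotient $Q$: once such a $\Phi$ exists it is automatically homotopic, through strict $G$-diffeomorphisms, to a $G$-biholomorphism, which is exactly the assertion. So the whole problem reduces to an \emph{existence} statement in the smooth (not holomorphic) category, and the only point where it matters that $V$ is a $G$-module, rather than an arbitrary Stein $G$-manifold, is precisely this one: for general target the strict $G$-diffeomorphism must be assumed (as in Theorem~\ref{KLSmain}), whereas for a module it will come for free.

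\emph{Producing the strict $G$-diffeomorphism.} First I would record that $Q=Q_V$ is contractible. Writing $V=V^G\oplus W$ with $W$ the sum of the nontrivial isotypic components, Fact~2 of Section~\ref{basics} gives $Q_V\cong V^G\times(W\cat G)$; the factor $V^G\cong\C^k$ is convex, and scalar multiplication $\C\times W\to W$, $(t,v)\mapsto tv$, which commutes with $G$, descends to a map $[0,1]\times(W\cat G)\to W\cat G$ contracting $W\cat G$ onto the image of $0$ (the invariants generating $\C[W]^G$ are homogeneous of positive degree since $W^G=0$). Hence $Q$ is contractible. By hypothesis there are an open cover $\{U_i\}_{i\in I}$ of $Q$ and $G$-biholomorphisms $\Phi_i\colon p^{-1}(U_i)\to r^{-1}(U_i)$ over $U_i$; restricted to reduced fibres each $\Phi_i$ is in particular a strict $G$-diffeomorphism over $U_i$. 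Now the strict $G$-diffeomorphisms $p^{-1}(U)\to r^{-1}(U)$ over varying open $U\subset Q$ form a sheaf $\mathcal S$ of sets on $Q$, on which the sheaf $\mathcal G$ of ($G$-equivariant, base-identity) self-maps of $r^{-1}(U)$ acts by post-composition, transitively and freely on stalks; the $\Phi_i$ exhibit $\mathcal S$ as a ``$\mathcal G$-torsor'' locally trivialized over $\{U_i\}$. Producing the desired $\Phi$ is the same as producing a global section of $\mathcal S$, i.e.\ trivializing this torsor, i.e.\ showing that the Čech $1$-cocycle $(\Phi_i\Phi_j^{-1})$ is a coboundary in the smooth category — and over the contractible, paracompact base $Q$ this is what one should expect.

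\emph{The main obstacle.} The hard part is that $\mathcal G$ is a sheaf of \emph{infinite-dimensional} groups (of $G$-equivariant biholomorphisms, resp.\ diffeomorphisms, of the reduced fibres), so the triviality of a $\mathcal G$-torsor over a contractible base is not a formal consequence of partitions of unity as it would be for an honest finite-dimensional structure group. I would handle this by a downward induction on the Luna strata of $Q$. Over the open stratum the Luna slice theorem (Theorem~\ref{thm:holomorphic.slice}) shows that $p^{-1}(U)$ and $r^{-1}(U)$ are, locally over the base, \emph{both} modelled on the same associated bundle $G\times^{H}S$ coming from a common slice representation, so the relevant gauge sheaf is that of a finite-rank bundle and, after refining the cover, the cocycle becomes a smooth coboundary by a partition-of-unity/convexity argument carried out inside the linear slice $S$. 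Then one extends the section already built over a neighbourhood of the union of the lower strata across the next stratum, again using the slice model along that stratum together with the flexibility of the smooth category and the contractibility established above. This stratum-by-stratum matching of the local lifts is the real content of the argument and is carried out in \cite{KLS2} (see also \cite{KLS1}, \cite{KLS3}); once the global strict $G$-diffeomorphism $\Phi$ is in hand, the preceding theorem completes the proof.
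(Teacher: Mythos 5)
A preliminary caveat: the paper does not prove this theorem. It is a survey statement imported from \cite{KLS1}, \cite{KLS3} (see also \cite{KLS2}); the only guidance in the text is the surrounding discussion, which explains that of the two steps in the proof of Theorem \ref{KLSmain} --- (i) producing local lifts of the Luna isomorphism and (ii) gluing them --- this theorem is the assertion that step (ii) works for a module target \emph{without} the strict $G$-diffeomorphism hypothesis that the general gluing theorem requires. So your proposal can only be measured against that discussion and the cited sources, not against a written-out argument.

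Your skeleton is consistent with the paper's framing: reducing the theorem to the production of a single strict $G$-diffeomorphism $X\to V$ over $Q$ and then quoting the immediately preceding theorem is a valid reduction, and your verification that $Q_V\cong V^G\times(W\cat G)$ is contractible via the scaling action is correct and is indeed one of the ingredients. The genuine gap is that the one step which \emph{is} the content of the theorem --- actually trivializing the smooth non-abelian cocycle $(\Phi_i\Phi_j^{-1})$, i.e.\ producing the global strict $G$-diffeomorphism --- is never carried out. You note that triviality of the torsor is ``what one should expect'' over a contractible base, correctly concede that this is not formal because the gauge sheaf is an infinite-dimensional sheaf of groups whose stalks $\Iso(X_q,V_q)$ jump across Luna strata, sketch a downward induction over strata, and then write that this matching ``is carried out in \cite{KLS2}.'' That is exactly the point at which the statement goes beyond the preceding theorem, so outsourcing it to the literature leaves the proposal without its load-bearing step. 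Moreover, as written your argument uses the module structure of $V$ only through the contractibility of $Q$; the slice-model and partition-of-unity remarks apply equally to any pair of Stein $G$-manifolds locally $G$-biholomorphic over a common quotient. If contractibility of $Q$ alone sufficed, the strict-$G$-diffeomorphism hypothesis of the preceding theorem would be redundant whenever $Q$ is contractible, which is not what the paper (or \cite{KLS1}, \cite{KLS3}) asserts --- indeed the text stresses that even for modules the hope that ``the additional obstruction just disappears'' is only partially confirmed. What the module structure actually supplies in the cited proofs is a $G$-equivariant contraction of the total space $V$ itself (scalar multiplication), compatible with the quotient map and the slice models of Theorem \ref{thm:holomorphic.slice}; it is this \emph{lifted} contraction, not merely the contractibility of the base, that kills the cocycle. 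In short: right architecture and correct preliminary observations, but the essential construction is missing and the mechanism by which linearity of $V$ enters is not correctly identified.
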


For the local $G$-isomorphisms we still do not know the optimal result. We have to make an additional technical assumption on the representation.

\begin{Def} \label{large}
 Assume that the set of closed orbits with trivial isotropy group is open in $X$  and that the complement, a closed subvariety of $X$, has complex codimension at least two. We say that $X$ is \emph{generic\/}. Let $X_{(n)}$ denote the subset of $X$ whose isotropy groups have dimension $n$. We say that $X$ is  \emph{large\/} if $X$ is generic and $\codim X_{(n)}\geq n+2$ for $n\geq 1$. 
 \end{Def}

For a simple group all but finitely many irreducible representations are large. A representations is large if all irreducible factors  are large. 

\begin{Thm}
Suppose that $X$ is a Stein $G$-manifold and $V$ is a $G$-module satisfying the following conditions.
\begin{enumerate}
\item There is a stratified biholomorphism $\phi$ from $Q_X$ to $Q_V$.
\item  $V$  (equivalently, $X$) is large.
\end{enumerate}
Then, by perhaps changing $\phi$, one can arrange that $X$ and $V$ are locally $G$-biholomorphic over $Q_X\simeq Q_V$, hence $X$ and $V$ are $G$-biholomorphic.
\end{Thm}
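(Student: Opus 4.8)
\emph{Proof proposal.} The plan is to reduce the statement to a purely local assertion and then invoke the preceding theorem, which already upgrades local $G$-biholomorphy over a common quotient to a global $G$-biholomorphism $X\simeq V$. Thus it suffices to show that, after possibly replacing $\phi$ by $\phi\circ\eta$ for a suitable stratified automorphism $\eta$ of $Q_X$, every point of the common quotient $Q:=Q_X\simeq Q_V$ has a neighborhood $U$ over which $p_X^{-1}(U)$ and $p_V^{-1}(U)$ are $G$-biholomorphic by a map descending to $\phi|_U$.

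First I would produce local $G$-biholomorphisms in the \emph{unpointed} sense. Fix $q\in Q_X$, let $x$ lie in the unique closed orbit over $q$ with stabilizer $L^\C$ and slice representation $W=T_xX/T_xK^\C x$. By the Luna slice theorem (Theorem~\ref{thm:holomorphic.slice}) a saturated Stein neighborhood of $K^\C x$ is $G$-biholomorphic to an open saturated piece of the model $K^\C\times^{L^\C}W$, and the analogous statement holds for $V$ at $\phi(q)$. Because $\phi$ preserves the Luna stratification, $X_q$ is $G$-biholomorphic to $V_{\phi(q)}$ as an affine (possibly non-reduced) $G$-variety, which pins down the slice representation of $V$ at $\phi(q)$ to be $W$ as well; hence $p_X^{-1}(U)$ and $p_V^{-1}(U')$ are $G$-biholomorphic for suitable small $U\ni q$ and $U'\ni\phi(q)$. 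The difficulty is that such a $G$-biholomorphism descends only to \emph{some} germ of stratified isomorphism $(Q_X,q)\to(Q_V,\phi(q))$, and the discrepancy with the germ of $\phi$ is a germ of stratified automorphism of $(Q_X,q)$ that must be disposed of.

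Correcting this discrepancy globally is the heart of the matter, and the step where the hypothesis that $V$ (equivalently $X$) is large is indispensable. I would encode the germs of stratified isomorphisms that \emph{do} lift as a subsheaf of the sheaf of germs of stratified automorphisms of $Q$, so that the obstruction to local liftability of $\phi$ becomes a section of the corresponding quotient sheaf, to be killed by a global stratified automorphism of $Q$. Largeness intervenes twice. Genericity (the free closed locus is open, with complement of complex codimension $\ge 2$) makes the local models over the principal stratum into holomorphic principal $G$-bundles, so the Equivariant Oka Principle (Theorem~\ref{thm:HK-classification}) governs the comparison there and lets local lifts be patched. The refined bounds $\codim X_{(n)}\ge n+2$ for $n\ge 1$ then power an induction on the dimension of the isotropy group, starting from the free locus and moving into ever deeper strata: at each stage one extends the already-built $G$-biholomorphism across the next stratum by an extension theorem adapted to its codimension, while simultaneously adjusting $\phi$ by a stratified automorphism concentrated near that stratum so that the extended map descends to the adjusted $\phi$. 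Making these successive corrections compatible, so that they assemble into one global stratified automorphism $\eta$ and one locally liftable $\phi\circ\eta$, is precisely what I expect to be the main obstacle — and it is exactly the point at which the argument uses that the action is large, whereas the corresponding local problem is open in general.

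Once $\phi$ is so modified, $X$ and $V$ are locally $G$-biholomorphic over $Q_X\simeq Q_V$, and the preceding theorem (valid without any auxiliary smooth or topological hypothesis precisely because the target $V$ is a $G$-module) produces a global $G$-biholomorphism $X\simeq V$; in particular, when $X=\C^n$ the given action of $G$ is linearizable.
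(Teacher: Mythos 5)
Your proposal follows exactly the route the paper indicates: reduce to arranging that $X$ and $V$ are locally $G$-biholomorphic over the common quotient after adjusting $\phi$, then invoke the preceding theorem (the Oka-principle gluing step, which for a $G$-module target needs no auxiliary strict $G$-diffeomorphism) to globalize; and you correctly locate the difficulty in the fact that the Luna-slice-model isomorphisms descend to \emph{some} germ of stratified isomorphism rather than to $\phi$ itself. The survey gives no more detail than this — the local step is precisely what is proved in \cite{KLS2} and \cite{KLS3} using largeness — so your outline matches the paper's approach; be aware, though, that your account of how largeness repairs the discrepancy (principal-bundle comparison over the free locus via Theorem~\ref{thm:HK-classification}, then induction over isotropy dimension using $\codim X_{(n)}\geq n+2$) is a plausible program rather than a proof, and by your own admission it stops at the point where the cited work actually begins.
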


Some special cases where the representation is not large can be dealt with as well:

\begin{Thm}
Suppose that $X$ is a Stein $SL_2 (\C)$-manifold and $V$ is a $SL_2 (\C)$-module. If there is a  stratified biholomorphism  (Luna isomorphism) $\phi$ from $Q_X$ to $Q_V$,
then $X$ is $SL_2 (\C)$-biholomorphic to $V$.
\end{Thm}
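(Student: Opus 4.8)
The plan is to fit $G=\Sl_2(\C)$ into the two-step machinery developed above. Recall the theorem stated earlier: a Stein $G$-manifold $X$ and a $G$-module $V$ that are \emph{locally $G$-biholomorphic over a common quotient} are automatically \emph{globally} $G$-biholomorphic (here, because $V$ is a module, no strictness hypothesis as in Theorem \ref{KLSmain} is needed). So it suffices to show that, after perhaps replacing the given Luna isomorphism $\phi\colon Q_X\to Q_V$ by another stratified biholomorphism, $X$ and $V$ become locally $G$-biholomorphic over $Q_X\simeq Q_V$. If $V$ is \emph{large} in the sense of Definition \ref{large}, this is exactly the content of the large-case theorem quoted above, and we are done. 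Thus we may assume $V$ is not large, which for a \emph{simple} group forces $V$ to be one of finitely many ``small'' $\Sl_2(\C)$-modules: a sum of trivial summands with at most a controlled number of copies of the two- and three-dimensional irreducibles $\C^2$ and $\mathrm{Sym}^2(\C^2)$, the multiplicities being bounded precisely by the failure of the estimates $\codim X_{(n)}\ge n+2$. The whole proof then reduces to this finite list.

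For the local lifting, fix $q\in Q_X$, let $x$ lie on the closed orbit in $p_X^{-1}(q)$ with reductive stabilizer $L^\C=(K_x)^\C$, and let $W$ be the slice representation $T_xX/T_x(G\cdot x)$, an $L^\C$-module. Since $\Sl_2(\C)$ has very few reductive subgroups, $L^\C$ is one of: $\Sl_2(\C)$ itself, the normalizer $N(T)$ of a maximal torus, the maximal torus $T\cong\C^\ast$, or a finite subgroup (cyclic, binary dihedral, or binary polyhedral). By the Luna slice theorem \ref{thm:holomorphic.slice}, the germs of $X$ at $q$ and of $V$ at $\phi(q)$ are the $G$-biholomorphism classes of the models $G\times^{L^\C}W_X$ and $G\times^{L^\C}W_V$; because $\phi$ preserves Luna strata, $W_X\cong W_V$ as $L^\C$-modules and the two models are abstractly $G$-biholomorphic. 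The one thing to check is that the $G$-biholomorphism can be chosen to cover $\phi$ on the base; a priori it only covers \emph{some} stratified biholomorphism of the local quotient, differing from $\phi$ by a stratified automorphism of the quotient of $W$. For each of the finitely many exceptional slice types I would classify the $G$-biholomorphisms of the model over its quotient explicitly: for $L^\C=T$ or $N(T)$ this is a short computation with $\C^\ast$-weights, and for $L^\C$ finite it is a computation in the invariant theory of the corresponding Kleinian group; in each case one checks that these $G$-biholomorphisms realize enough stratified automorphisms of the base to absorb the discrepancy, so $\phi$ (possibly modified on a locally finite cover) lifts locally.

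Having the local lifts, the passage to a global $G$-biholomorphism over $Q_X\simeq Q_V$ is precisely the Oka-principle gluing step already established above for Stein $G$-manifolds locally $G$-biholomorphic over a common quotient, and then the cited module theorem finishes the proof. The main obstacle is the local lifting for the non-large slice representations of $\Sl_2(\C)$: in the large case one exploits the codimension bounds $\codim X_{(n)}\ge n+2$ to put things in general position, but here there is no such room, so each exceptional slice type must be understood by hand through its invariant theory, and one must verify that the combined freedom of altering $\phi$ and of choosing the abstract local $G$-biholomorphism suffices to kill the obstruction. Once this finite bookkeeping is done, nothing further is needed.
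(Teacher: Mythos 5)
Your overall architecture is the same one the paper outlines for this circle of results: reduce to the statement that $X$ and $V$ are locally $G$-biholomorphic over a common quotient (after possibly modifying $\phi$), invoke the gluing theorem for modules (which indeed needs no strictness hypothesis), and observe that the only remaining issue is the local lifting at the non-large slice types. Note that the survey itself gives no proof of the $\Sl_2(\C)$ statement --- it is quoted from the Kutzschebauch--L\'arusson--Schwarz papers --- so the comparison can only be against this two-step strategy, which you have correctly reproduced, including the correct list of reductive subgroups of $\Sl_2(\C)$ as the possible principal isotropy groups of slice representations.

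The genuine gap is that the decisive step is asserted rather than proved. Everything hinges on the claim that for each non-large slice type $(L^\C,W)$ of $\Sl_2(\C)$, the $G$-biholomorphisms of the model $G\times^{L^\C}W$ over its quotient ``realize enough stratified automorphisms of the base to absorb the discrepancy.'' This is precisely the content of the theorem, not bookkeeping: for a general reductive group the analogous assertion is exactly what is open (the paper poses Question~1 as unresolved), so it cannot be waved through by saying ``one checks.'' You would need to actually exhibit, for each small slice representation, either enough $G$-equivariant automorphisms of the local model inducing a prescribed stratified automorphism of the local quotient, or enough freedom to replace $\phi$; and the latter raises a coherence problem you do not address: the modifications of $\phi$ are made chart by chart on a locally finite cover, and you must explain why they patch to a single global stratified biholomorphism of $Q_X$ onto $Q_V$ (the general theorem you cite allows one global change of $\phi$, not independent local ones). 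A second, smaller gap: your claim that failure of largeness confines $V$ to trivial summands plus boundedly many copies of $\C^2$ and $\mathrm{Sym}^2(\C^2)$ is plausible but unverified, and in any case the relevant finite list is not the list of non-large modules $V$ but the list of non-large slice types occurring over points of $Q_X$, which must be controlled stratum by stratum. Until the case-by-case analysis is actually carried out, the proposal is a correct plan rather than a proof.
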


Also an old result of Jiang from our history of the Holomorphic Linearization Problem can be reproved by using this approach.

\begin{Thm} Suppose that $X$ is a Stein $G$-manifold and $V$ is a $G$-module with one-dimensional categorical quotient $Q_V$. If there  is a stratified biholomorphism $\phi$ from $Q_X$ to $Q_V$,
then $X$ is $G$-biholomorphic to $V$.
\end{Thm}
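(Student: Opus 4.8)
The plan is to not prove $X\cong_G V$ by hand, but to reduce to the gluing theorem already available in this section: the result stating that if a Stein $G$-manifold $X$ and a $G$-module $V$ are \emph{locally $G$-biholomorphic over a common quotient}, then they are $G$-biholomorphic. Thus the whole task becomes the \emph{local} one: starting from the given stratified biholomorphism $\phi$, produce (possibly after replacing $\phi$ by another stratified biholomorphism $\widetilde\phi$) an open cover of the common quotient over which $\widetilde\phi$ lifts to $G$-biholomorphisms of $X$ and $V$. First I would record that $Q_V=V\cat G$ is a normal Stein space of dimension one, hence a smooth open Riemann surface, and that its Luna stratification — being locally finite with all non-principal strata of dimension $<1$ — consists of one open dense principal stratum $Q^\circ$ together with a discrete set $D=Q_V\setminus Q^\circ$ of points carrying strictly larger slice representations. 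Via $\phi$ I identify $Q:=Q_X\cong Q_V$ and carry this picture over to $X$.

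\textbf{Local models from the slice theorem.} Next I would fix $q\in Q$ and points $x\in p_X^{-1}(q)$, $v\in p_V^{-1}(\phi(q))$ on the respective unique closed orbits, with stabilizers $L^\C$ and $M^\C$. Since $\phi$ preserves Luna strata, the fibers over $q$ and $\phi(q)$ are $G$-biholomorphic; as both manifolds are smooth, this forces $L=M$ and forces the slice $L^\C$-modules to agree — call the common slice module $W$, with $W\cat L^\C$ one-dimensional. By the Luna slice theorem (Theorem~\ref{thm:holomorphic.slice}) there are saturated Stein neighborhoods of the two closed orbits $G$-biholomorphic to neighborhoods of the zero sections of $G\times^{L^\C}W$, on both the $X$-side and the $V$-side. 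Shrinking to a small disc $U\ni q$ in $Q$, I obtain a $G$-biholomorphism $\Phi_U\colon p_X^{-1}(U)\to p_V^{-1}(U')$ which descends to \emph{some} biholomorphism $U\to U'$ of discs, automatically a stratified one. Doing this for every $q$ (with the trivial or finite slice data at points of $Q^\circ$, and the genuinely larger data at the discrete points of $D$) gives a cover of $Q$ over which $\phi$ is locally lifted up to a reparametrization of the local quotient.

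\textbf{Patching the reparametrizations — the main obstacle.} The descended map $U\to U'$ need not be $\phi|_U$, and to invoke the global theorem all local lifts must be taken over one and the same stratified biholomorphism. The allowed freedom is to precompose each $\Phi_U$ with a $G$-automorphism of the slice model $G\times^{L^\C}W$ covering a biholomorphism of the base disc; concretely this means lifting biholomorphisms of $W\cat L^\C\cong(\text{disc})$ to $L^\C$-equivariant biholomorphisms of $W$ near the origin — possible, but not for every base biholomorphism when $L^\C$ is nontrivial, so some care in the choices is needed, in particular at the points of $D$ where $L^\C$ may be positive-dimensional. The hard part will be to make these local choices compatible: one must correct $\phi$ to a stratified $\widetilde\phi\colon Q_X\to Q_V$ so that the corrected $\Phi_U$'s all descend to $\widetilde\phi$. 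This is a $1$-cocycle problem on the Riemann surface $Q$ with values in the sheaf of admissible reparametrizations, and it is exactly here that $\dim Q=1$ is decisive: the obstruction lives in an $H^1$ over a noncompact Riemann surface and is killed, either by the Oka--Grauert principle (in the spirit of the Equivariant Oka Principle, Theorem~\ref{thm:HK-classification}) or directly by vanishing theorems on open Riemann surfaces. Once this patching is done, $X$ and $V$ are locally $G$-biholomorphic over the common quotient $Q$, and the cited theorem for $G$-modules delivers a global $G$-biholomorphism $X\to V$, which is what we want.
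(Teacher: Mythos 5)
Your overall strategy---reduce to the gluing theorem stated just above (``locally $G$-biholomorphic over a common quotient implies $G$-biholomorphic'') and spend the hypothesis $\dim Q=1$ entirely on producing the local lifts---is exactly the two-step route the paper indicates for this result, and your use of the Luna slice theorem (Theorem \ref{thm:holomorphic.slice}) to identify the slice data on both sides is correct.

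The gap is in your third paragraph, which is where the theorem actually has to be proved. You correctly note that the slice charts give a $G$-biholomorphism $\Phi_U$ descending to some biholomorphism $\psi_U\colon U\to U'$ that need not be $\phi|_U$, but you then (a) assert that lifting a germ of a biholomorphism of $W\cat L^\C$ to an $L^\C$-equivariant biholomorphism of $W$ is ``not possible for every base biholomorphism'', and (b) outsource the remaining difficulty to a $1$-cocycle in a ``sheaf of admissible reparametrizations'' whose $H^1$ you claim vanishes because $Q$ is an open Riemann surface. Claim (b) is unjustified: that sheaf is a sheaf of nonabelian groups of germs of biholomorphisms, no vanishing theorem for its $H^1$ is available off the shelf, and Theorem \ref{thm:HK-classification} is not stated for such coefficients. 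More importantly, claim (a) is false in the one-dimensional case, and seeing why removes the cocycle problem entirely. Since $W\cat L^\C$ is normal, graded and one-dimensional, $\C[W]^{L^\C}=\C[f]$ for a single homogeneous invariant $f$ of some degree $d$, so the local quotient map is $w\mapsto f(w)$ and scalar multiplication by $\lambda$ descends to $t\mapsto\lambda^{d}t$. A germ of a biholomorphism fixing the special point is $t\mapsto u(t)t$ with $u(0)\neq 0$, and it lifts to the $L^\C$-equivariant germ $w\mapsto u(f(w))^{1/d}\,w$ (choose a branch of the $d$-th root of the unit $u$ near $0$); at principal points the slice representation is trivial, $p_V^{-1}(U)\cong U\times G/H$, and every base biholomorphism lifts trivially. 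Hence each $\Phi_U$ can be corrected \emph{individually} to descend to $\phi|_U$ itself; no global modification $\widetilde\phi$ and no cohomological cancellation among the charts is needed. The genuine cocycle problem---matching the corrected local lifts on overlaps---is precisely the content of the gluing theorem you already invoke, and that theorem holds for quotients of any dimension; the hypothesis $\dim Q=1$ is spent entirely on the elementary root-extraction above. As written, your proof defers its only nontrivial step to an argument that does not exist in the form you describe.
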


A comment on the last three theorems is in order, namely that we do not assume from the beginning that the Stein manifold $X$ is biholomorphic to affine space. Therefore these theorems can be viewed as a characterization of
$\C^n$. On the other hand it seems unlikely that this characterization can be applied  in any interesting case to prove that some Stein manifold is $\C^n$. 
For example let's pretend we would like to use it to prove  that the Koras-Russel cubic threefold $M_{KR}$ given by equation \eqref{KR} is biholomorphic to $\C^3$. The only known action of a reductive group on $M_{KR}$ is the famous $\C^\star$-action given by the restriction to $M_{KR}$ of the linear action 
$\C^\star \times \C^4 \to \C^4$ 
$$ (\lambda, (x, y, s, t)) \mapsto (\lambda^6 x, \lambda^{-6} y, \lambda^3 s, \lambda^2 t).$$

 The reader is invited to calculate
the Luna quotient and to observe that this Luna quotient is not isomorphic to the Luna quotient of a linear action. The theorem cannot be applied, and thus no
information whether $M_{KR}$ is biholomorphic to $\C^3$ or not can be obtained. On the other hand, if there were a(nother) way to conclude that $M_{KR}$
is biholomorphic to $\C^3$, we would have found a non-linearizable holomorphic $\C^\star$-action on $\C^3$. Up to now the holomorphic linearization for
$\C^\star$ is known to hold on $\C^2$, not to hold on $\C^n$ for $n\ge 4$ (see section \ref{history}) and is open on $\C^3$.

The last three theorems give already a lot of evidence that the answer to Question 1 could be positive. If so, this would in some sense give one  beautiful answer to the Holomorphic Linearization Problem. On the other hand the question about  minimal dimension for non-linearizable actions of a given reductive group $G$ is much more difficult to answer. Until we have an effective criterion which can distinguish affine space $\C^n$ among Stein manifolds with density property there is no hope for an answer.  At this point it is worth mentioning another  criterion for characterization of $\C^n$ obtained by Isaev and Kruzhilin. A Stein manifold $X$ of dimension $n$ whose holomorphic automorphism group $\Aut_{hol} (X)$ is  isomorphic as topological group to $\Aut_{hol} (\C^n)$ is biholomorphic to $\C^n$ \cite{IKruz}. This  conclusion even holds without the Stein assumption \cite{IKruz1}. Unfortunately this criterion is equally not applicable to the above problems as ours from the last three theorems. In the early 1990's  J.P. Rosay asked the author whether $\Aut_{hol} (\C^n)$ as an abstract group (without topology) could characterize $\C^n$. Unfortunately even a positive answer to this seemingly difficult question would not be very helpful for our problems.



\newcommand{\etalchar}[1]{$^{#1}$}
\providecommand{\bysame}{\leavevmode\hbox to3em{\hrulefill}\thinspace}
\providecommand{\MR}{\relax\ifhmode\unskip\space\fi MR }
\providecommand{\MRhref}[2]{%
  \href{http://www.ams.org/mathscinet-getitem?mr=#1}{#2}
}
\providecommand{\href}[2]{#2}

\end{document}